\newtheorem{thm}{Theorem}[section]
\newtheorem{lem}[thm]{Lemma}
\newtheorem{cor}[thm]{Corollary}
\newtheorem{prop}[thm]{Proposition}
\newtheorem{rem}[thm]{Remark}
\newtheorem{con}[thm]{Condition}
\newtheorem{asmp}[thm]{Assumption}
\newtheorem{dfn}[thm]{Definition}
\def\dg{{\rm diag}}
\def\nl{{\rm Null}}
\def\ra{{\rm Range}}
\newcommand\norm[1]{\left\lVert#1\right\rVert}
\date{\today}
\author{Rujun Jiang\thanks{School of Data Science, Fudan University, Shanghai, China, rjjiang@fudan.edu.cn}
\and Duan Li\thanks{Department of Management Sciences, City University of Hong Kong, Hong Kong,  dli226@cityu.edu.hk}}
\title{Novel Reformulations and Efficient Algorithms for the Generalized Trust Region Subproblem}
\begin{document}

\maketitle
\begin{abstract}
We present a new solution framework to solve the generalized trust region subproblem (GTRS)  of minimizing a quadratic objective  over a quadratic constraint. More specifically, we derive a convex quadratic reformulation (CQR)  via minimizing a linear objective over two convex quadratic constraints for the GTRS.
We show that an optimal solution of the GTRS can be recovered from an optimal solution of the CQR. We further prove that this CQR is equivalent to minimizing the maximum of the two convex quadratic functions derived from the CQR for the case under our investigation. Although the latter minimax problem is nonsmooth, it is well-structured and convex. We thus develop two  steepest descent algorithms corresponding to two different line search rules. We prove for both algorithms their global sublinear convergence rates. We also obtain a local linear convergence rate  of the first algorithm by estimating the  Kurdyka-{\L}ojasiewicz exponent at any optimal solution under mild conditions.  We finally demonstrate the efficiency of our algorithms in our numerical experiments.
\end{abstract}

\section{Introduction}
We consider the following generalized trust region subproblem (GTRS),
\begin{eqnarray}
\rm(P) &\min&f_1(x):=\frac{1}{2}x^\top Q_1x+b_{1}^\top x\notag\\
&\rm s.t.&f_2(x):=\frac{1}{2}x^\top Q_2x+b_{2}^\top x+c\leq 0,\notag
\end{eqnarray}
where $Q_1$ and $Q_2$ are $n\times n$ symmetric matrices (not necessary to be positive semidefinite),
$b_{1},b_2\in\mathbb{R}^n$ and $c\in\mathbb{R}$.

Problem (P) is known as the generalized trust region subproblem (GTRS)  \cite{stern1995indefinite, pong2014generalized}. When $Q_2$ is an identity matrix $I$ and $b_{2}=0$, $c=-1/2$, problem (P) reduces to the classical trust region subproblem (TRS). The TRS first arose in the trust region method for nonlinear optimization \cite{conn2000trust,yuan2015recent}, and  has found many  applications including robust optimization \cite{ben2009robust} and the least square problems \cite{zhang2010derivativefree}.
As a generalization, the GTRS also admits its own applications such as time of arrival problems \cite{hmam2010quadratic} and subproblems of consensus ADMM in signal processing \cite{huang2016consensus}.  Over the past two decades,
numerous solution methods have been developed for TRS (see
\cite{More1983Computing,martinez1994local,ye1992new,rendl1997semidefinite,hazan2016a,gould2010solving,adachi2017solving} and references therein).

Various methods have been developed for  solving the GTRS under various assumptions
(see \cite{more1993generalizations,stern1995indefinite,ben1996hidden,sturm2003cones,feng2012duality,pong2014generalized,adachi2016eigenvaluebased} and references therein).
Although it appears being nonconvex, the GTRS essentially enjoys its hidden convexity. The GTRS
can be solved via a semidefinite programming (SDP) reformulation, due to the celebrated S-lemma  \cite{polik2007survey}, which was first established in
 \cite{yakubovich1971s}. However, suffering from relatively large computational complexity, the SDP algorithm is not practical for large-scale  applications.
To overcome this difficulty, several recent papers \cite{jeyakumar2014trust,burer2016how,ho2017second} demonstrated that the TRS admits a second order cone programming (SOCP) reformulation.  Ben-Tal and den Hertog  \cite{ben2014hidden} further showed an SOCP reformulation for the GTRS under a  simultaneously diagonalizing (SD) procedure of the quadratic forms.   Jiang et al. \cite{jiang2017socp} derived an SOCP reformulation for the GTRS when the problem has a finite optimal value and further derived a closed form solution when the SD condition fails.
On the other hand, there is rich literature on iterative algorithms to solve the GTRS directly under mild conditions, for example, \cite{more1993generalizations, stern1995indefinite,pong2014generalized,salahi2016efficient}.  Pong and Wolkowicz proposed an efficient algorithm based on minimum generalized eigenvalue of a parameterized
matrix pencil for the GTRS, which extended  the results in \cite{fortin2004trust} and \cite{rendl1997semidefinite} for the TRS.
 Salahi and Taati \cite{salahi2016efficient}  also derived  a diagonalization-based algorithm under the SD condition of the quadratic forms. Recently, Adachi and Nakatsukasa \cite{adachi2016eigenvaluebased} also developed a novel eigenvalue-based algorithm to solve the GTRS.

Our main contribution in this paper is to propose a novel convex quadratic reformulation  (CQR) for the GTRS that is simpler than \cite{ben2014hidden,jiang2017socp}  and further a minimax problem reformulation and develop an efficient algorithm  to solve the minimax problem reformulation. Numerical results demonstrate that our method outperforms all the existing methods in the literature for sparse problem instances. We acknowledge that our CQR was inspired by  the following CQR in Flippo and Janson \cite{flippo1996duality} for the TRS,
\begin{equation}\label{trscqr}
\min_{x}\{\frac{1}{2}x^\top(Q_1-\lambda_{\min}(Q_1)I)x+b_{1}^\top x+\frac{1}{2}\lambda_{\min}(Q_1): ~x^\top x\leq 1\},
\end{equation}
where $\lambda_{\min}(Q_1)$ is the smallest eigenvalue of matrix $Q_1$. Unfortunately, this CQR was underappreciated in that time. Recently, people rediscovered this result; Wang and Xia \cite{wang2016linear}
 and Ho-Nguyen and Kilinc-Karzan \cite{ho2017second} presented a linear time algorithm to solve the TRS by applying Nesterov's accelerated gradient descent algorithm to (\ref{trscqr}).  We, instead, rewrite  the epigraph reformulation for (\ref{trscqr}) as follows,
$$\min_{x,t}\{t:~\frac{1}{2}x^\top(Q_1-\lambda_{\min}(Q_1)I)x+b_{1}^\top x+\frac{1}{2}\lambda_{\min}(Q_1)\leq t, ~x^\top x\leq 1\}.$$
Motivated by the above reformulation, we demonstrate that the GTRS is equivalent to exact one of the following two convex quadratic  reformulations under two different conditions,
\begin{eqnarray*}
{\rm(P_1)}&\min_{x,t}&\{t:~h_1(x)\leq t,~ h_2(x)\leq t\},\\
{\rm(P_2)}&\min_{x,t}&\{t:~h_3(x)\leq t, f_2(x)\leq 0\},
\end{eqnarray*}
where $h_1(x)$, $h_2(x)$ and $h_3(x)$, which will be defined later in Theorem \ref{mainthm},   and $f_2(x)$ defined in problem (P),  are  convex but possibly not strongly convex, quadratic functions. To our best knowledge, our proposed CQRs are derived the first time for the GTRS.
The  reformulation $\rm(P_2)$ only occurs when the quadratic constraint is convex and thus can be solved by a slight modification of \cite{wang2016linear,ho2017second} in the accelerated gradient projection method by projecting, in each iteration, the current solution to the ellipsoid instead of the unit ball in the TRS case.

In this paper we focus on the problem reformulation $\rm(P_1)$. Although our CQR can be solved  as an SOCP problem \cite{ben2001lectures}, it is not efficient when the problem size is large. Our main contribution is based on a recognition that problem $\rm(P_1)$ is equivalent to minimizing the maximum of the two convex quadratic functions in  $\rm(P_1)$,
$${\rm(M)}~\min H(x):=\max \{h_1(x),h_2(x)\}.$$
 We further derive efficient algorithms to solve the above minimax problem. To the best of our knowledge, the current literature lacks studies on such a problem formulation for a large scale setting except using a black box subgradient method with an $O(1/\epsilon^2)$ convergence rate \cite{boyd2006subgradient}, which is really slow. Note that  Section 2.3 in Nesterov's book \cite{nesterov2003introductory} presents a gradient based method with linear convergence rate for solving the minimization  problem $(M)$ under the condition that both $h_1(x)$ and $h_2(x)$ are strongly convex. However, Nesterov's algorithms cannot be applied to solve our problem since in our problem setting at least one function of $h_1(x)$ and $h_2(x)$ is not strongly convex.
By using the special structure of problem (M),  we derive  a steepest descent method in Section 3. More specifically, we choose either the negative gradient when the current point is smooth,  or  a  vector in the subgradient set with the smallest norm (the steepest descent direction) when the current point is nonsmooth  as the descent direction, and derive two steepest descent algorithms with two different line search rules accordingly. In the first algorithm we choose a special step size, and in the second algorithm we propose
a modified Armijo line search  rule.
We also prove the global sublinear convergence rate for both algorithms.
 The
first algorithm even admits a global convergence rate of $O(1/\epsilon)$, in the same order as the gradient
descent algorithm, which is faster than the subgradient method. In addition, we demonstrate that the first algorithm also admits  a local linear convergence rate,  by a delicate analysis on the Kurdyka-{\L}ojasiewicz (KL) \cite{attouch2009convergence,bolte2015error,liu2016quadratic,gao2016ojasiewicz} property for problem (M).
We illustrate in our numerical experiments the efficiency of the proposed algorithms  when compared with the state-of-the-art methods for GTRS in the literature.



The rest of this paper is organized as follows. In Section 2, we derive an explicit CQR for problem (P) under different conditions and show how to recover an optimal solution of problem (P) from that of the CQR.
In Section 3, we reformulate the CQR to a convex nonsmooth unconstrained minimax problem and derive two efficient solution algorithms. We provide convergence analysis for both algorithms. In Section 4, we demonstrate the efficiency of our algorithms from our numerical experiments. We conclude our paper in Section 5.

\textbf{Notations}  We use $v(\cdot)$ to denote the optimal value of problem $(\cdot)$.  The matrix transpose of matrix $A$ is denoted by $A^\top$ and  inverse of matrix $A$ by $A^{-1}$, respectively.
\section{Convex quadratic reformulation}
In this section, we derive a novel convex quadratic reformulation for problem (P). To avoid some trivial cases, we assume, w.o.l.g., the Slater condition holds for problem (P), i.e., there exists at least one interior feasible point. When both $f_1(x)$ and $f_2(x)$ are convex,  problem (P) is already a convex quadratic problem.
Hence, w.l.o.g., let us assume that not both $f_1(x)$ and $f_2(x)$ are convex.
We need to introduce the following conditions to exclude some unbounded cases.
\begin{asmp}\label{poreg}
The set $I_{PSD}:=\{\lambda:Q_1+\lambda Q_2\succeq0\}\cap \mathbb{R}_+$ is not empty, where $\mathbb{R}_+$ is the nonnegative orthant.\end{asmp}
\begin{asmp}\label{comnull}
The common null space of $Q_1$ and $Q_2$ is  trivial, i.e., $\nl(Q_1)\cap\nl(Q_2)=\{0\}$.
\end{asmp}

Before introducing our CQR, let us first recall the celebrated S-lemma by defining $\tilde f_1(x)=f_1(x)+\gamma$ with an arbitrary constant $\gamma\in \mathbb{R}$.
\begin{lem}[S-lemma \cite{yakubovich1971s,polik2007survey}]\label{slemma}
The following two statements are equivalent:\\
1. The system of $\tilde f_1(x)<0$ and $f_2(x)\leq0$ is not solvable;\\
2. There exists $\mu\geq0$ such that $\tilde f_1(x)+\mu f_2(x)\geq0$ for all $x\in\mathbb{R}^n$.
\end{lem}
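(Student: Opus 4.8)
The implication $2\Rightarrow1$ is immediate: if $\tilde f_1(x)+\mu f_2(x)\geq0$ for all $x$ with some $\mu\geq0$, then every $x$ satisfying $f_2(x)\leq0$ obeys $\tilde f_1(x)\geq-\mu f_2(x)\geq0$, so the system in statement~1 has no solution. The content of the lemma is $1\Rightarrow2$, and the plan is the classical convexity-plus-separation argument.

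Assume statement~1 holds. Let $R:=\{(\tilde f_1(x),f_2(x)):x\in\mathbb{R}^n\}\subseteq\mathbb{R}^2$ be the joint range (which need not itself be convex) and set
\[
W:=\{(u,v)\in\mathbb{R}^2:~\exists\, x\in\mathbb{R}^n \text{ with } u>\tilde f_1(x),~v\geq f_2(x)\}.
\]
Then $W$ is nonempty, $W+\mathbb{R}^2_+\subseteq W$, and statement~1 is exactly the assertion $(0,0)\notin W$. The key step I would carry out next is to prove that $W$ is convex. For this I would invoke Dines' joint-range theorem --- for homogeneous quadratic forms $q_1,q_2$ on $\mathbb{R}^m$ the set $\{(q_1(y),q_2(y)):y\in\mathbb{R}^m\}$ is a convex cone --- after homogenizing: introduce an extra variable $t$, write $\tilde f_i(x)=\hat f_i(x,1)$ with $\hat f_i$ quadratic forms on $\mathbb{R}^{n+1}$, deduce convexity of $R+\mathbb{R}^2_+$, and hence of $W$. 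Granting that $W$ is convex, since $(0,0)\notin W$ a separating hyperplane yields $(\mu_1,\mu_2)\neq(0,0)$ with $\mu_1u+\mu_2v\geq0$ for all $(u,v)\in W$; because $W+\mathbb{R}^2_+\subseteq W$ we get $\mu_1\geq0$ and $\mu_2\geq0$, and because $(\tilde f_1(x)+\varepsilon,f_2(x))\in W$ for every $\varepsilon>0$, letting $\varepsilon\to0^+$ gives $\mu_1\tilde f_1(x)+\mu_2f_2(x)\geq0$ for every $x$. It remains to check $\mu_1>0$: if $\mu_1=0$ then $\mu_2>0$ and $f_2(x)\geq0$ for all $x$, contradicting the Slater assumption that some interior point $\bar x$ has $f_2(\bar x)<0$. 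Dividing by $\mu_1$ and setting $\mu:=\mu_2/\mu_1\geq0$ then gives statement~2.

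The main obstacle is the convexity of the shifted joint range $W$, i.e., Dines' theorem together with the homogenization step that reduces the inhomogeneous quadratics $\tilde f_1,f_2$ to homogeneous forms; everything afterwards (the separation, the signs of $\mu_1,\mu_2$, and the use of Slater to rule out a ``vertical'' separating hyperplane) is routine. As the S-lemma is quoted here from \cite{yakubovich1971s,polik2007survey}, an equally acceptable option for the paper is to cite the survey \cite{polik2007survey} for the full proof of $1\Rightarrow2$ and to write out only the trivial direction.
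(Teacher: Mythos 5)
First, note that the paper offers no proof of this lemma at all: it is quoted from \cite{yakubovich1971s,polik2007survey} and used as a black box, so your closing remark --- cite the survey and at most record the easy direction --- is exactly what the authors do. Your verification of $2\Rightarrow1$ is correct, and so is the endgame of your sketch for $1\Rightarrow2$: once one has a functional $(\mu_1,\mu_2)\geq(0,0)$, not both components zero, that is nonnegative on the joint range, the invariance of $W$ under $\mathbb{R}^2_+$ gives the signs, and the Slater point (which the paper assumes globally for (P), though the lemma statement leaves it implicit) rules out $\mu_1=0$.

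The gap is precisely at the step you label the main obstacle, and the route you propose for it does not work as described. You want to conclude that $R+\mathbb{R}^2_+$ (hence $W$) is convex by homogenizing and applying Dines' theorem. But Dines' theorem gives convexity of the joint range of the two homogenized forms over all of $\mathbb{R}^{n+1}$; by degree-two homogeneity that set is the cone generated by $R$ together with the joint range of the pure quadratic parts, and convexity of such a conic set does not descend to convexity of the affine slice $t=1$, i.e., of $R$ or of $R+\mathbb{R}^2_+$. In fact the convexity of $W$ is essentially equivalent to the S-lemma itself (the complement of $W$ is a union of half-planes indexed by $\mu$ exactly when the lemma holds at every translate of $(\tilde f_1,f_2)$), so it cannot be taken as an intermediate step obtained ``for free'' from Dines. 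The classical proof instead performs the separation upstairs: one shows that statement 1 forces the homogeneous strict system $\hat f_1(x,t)<0$, $\hat f_2(x,t)<0$ to be unsolvable (for $t\neq0$ rescale to $t=1$; for $t=0$ a direction $x$ with both pure quadratic parts negative would drive both $\tilde f_1(sx)$ and $f_2(sx)$ to $-\infty$ as $s\to\infty$, again contradicting statement 1), then applies Dines to the genuinely homogeneous joint range in $\mathbb{R}^{n+1}$, separates it there from the open negative orthant to obtain $\mu_1,\mu_2\geq0$ with $\mu_1\hat f_1+\mu_2\hat f_2\geq0$ on all of $\mathbb{R}^{n+1}$, and only then sets $t=1$ before invoking Slater to divide by $\mu_1$. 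If you intend to write the proof out rather than cite \cite{polik2007survey}, it should be restructured along these lines.
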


Using the S-lemma, the following lemma shows a necessary and sufficient condition under which problem (P) is bounded from below.\begin{lem}[\cite{Hsia2014revisit}]\label{nscon}
Problem (P) is bounded from below if and only if the following system has a solution for $\lambda$:
\begin{equation*}\label{condxia}Q_1+\lambda Q_2\succeq0,~\lambda\geq0,~b_1+\lambda b_2\in\ra(Q_1+\lambda Q_2).\end{equation*}
\end{lem}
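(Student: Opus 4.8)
The plan is to derive the necessary and sufficient condition from the S-lemma (Lemma \ref{slemma}) together with a standard fact about when a single convex quadratic is bounded below. First I would observe that problem (P) is bounded from below by a value $\gamma$ if and only if, setting $\tilde f_1(x)=f_1(x)-\gamma$, the system $\tilde f_1(x)<0,\ f_2(x)\le 0$ is not solvable. Here I need a small separate argument to move from "$\ge\gamma$ on the feasible set" to "$>\gamma$ is infeasible together with $f_2\le 0$"; this is immediate because the feasible set is exactly $\{x:f_2(x)\le 0\}$, and under the Slater assumption the strict inequality $\tilde f_1(x)<0$ can be probed without boundary pathologies (if $f_1$ attains values below $\gamma$ on the closure but not the interior of the feasible region, a perturbation argument restores strict feasibility). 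Applying Lemma \ref{slemma}, (P) is bounded below by $\gamma$ iff there is $\mu\ge 0$ with $f_1(x)-\gamma+\mu f_2(x)\ge 0$ for all $x$. Hence (P) is bounded from below (by \emph{some} $\gamma$) iff there exist $\mu\ge 0$ and $\gamma\in\mathbb{R}$ such that the quadratic $q_\mu(x):=f_1(x)+\mu f_2(x)-\gamma$ is nonnegative everywhere, i.e. iff there is $\mu\ge 0$ for which $f_1+\mu f_2$ is bounded below as a quadratic function.

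Next I would invoke the elementary characterization: a quadratic $x\mapsto \tfrac12 x^\top M x + d^\top x + e$ is bounded from below on $\mathbb{R}^n$ if and only if $M\succeq 0$ and $d\in\ra(M)$ (when $M\succeq 0$, completing the square shows the infimum is finite exactly when $d$ lies in the range; otherwise one drives the linear term along a null vector of $M$ to $-\infty$, and if $M$ has a negative eigenvalue the quadratic term already goes to $-\infty$). Applying this with $M=Q_1+\mu Q_2$, $d=b_1+\mu b_2$ gives precisely: (P) is bounded below iff there exists $\mu\ge 0$ with $Q_1+\mu Q_2\succeq 0$ and $b_1+\mu b_2\in\ra(Q_1+\mu Q_2)$. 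Renaming $\mu$ as $\lambda$ yields the stated system.

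The main obstacle, and the step deserving the most care, is the first one: justifying the passage between "$f_1\ge\gamma$ on $\{f_2\le 0\}$" and the \emph{strict}-inequality unsolvability hypothesis required by the S-lemma, and confirming that the resulting $\gamma$ (equivalently $v(P)$) is genuinely attained as a finite bound rather than only an infimum over $\mu$. Concretely, one must rule out the situation where $\inf_x\{f_1(x)+\mu f_2(x)\}$ is finite for each admissible $\mu$ but the quadratic $f_1+\mu f_2$ itself fails to be bounded below — this cannot happen because "bounded below as a function" and "infimum finite" coincide, but it is worth stating. The Slater assumption, invoked in Section 2, is exactly what makes the strict/non-strict translation clean. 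Everything after that is routine linear algebra on quadratics, and the conclusion matches Lemma \ref{nscon} as stated (this is essentially the argument of \cite{Hsia2014revisit}).
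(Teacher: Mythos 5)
The paper does not prove this lemma; it is quoted verbatim from the cited reference \cite{Hsia2014revisit} and used as a black box. Your derivation is correct and is the standard argument for this fact: the tautological equivalence between ``$f_1\ge\gamma$ on $\{f_2\le 0\}$'' and unsolvability of $\{f_1-\gamma<0,\ f_2\le 0\}$, the S-lemma (valid here because the paper assumes the Slater condition for (P)), and the elementary characterization that $\tfrac12 x^\top Mx+d^\top x+e$ is bounded below iff $M\succeq0$ and $d\in\ra(M)$. Two small remarks: the ``boundary pathology'' you worry about in the first step is a non-issue, since that equivalence is purely logical and needs no perturbation; and the Slater condition is what licenses the S-lemma itself rather than the strict/non-strict translation. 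Note also that the ``if'' direction needs no S-lemma at all: given $\lambda\ge0$ with the stated properties, $f_1(x)\ge f_1(x)+\lambda f_2(x)\ge\inf_y\{f_1(y)+\lambda f_2(y)\}>-\infty$ for every feasible $x$.
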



We make Assumption \ref{comnull} without loss of generality, because otherwise we can prove an unboundedness from below of the problem  (see, e.g., \cite{jiang2017socp} and \cite{adachi2016eigenvaluebased}). Under Assumption \ref{comnull}, if Assumption \ref{poreg} fails, there exists no nonnegative $\lambda$ such that $Q_1+\lambda Q_2\succeq0$ and problem (P) is unbounded from below due to Lemma \ref{nscon}.  So both Assumptions \ref{poreg} and  \ref{comnull} are made without loss of generality.

It has been shown in \cite{more1993generalizations} that $\{\lambda:Q_1+\lambda Q_2\succeq0\}$ is an interval and thus $\{\lambda:Q_1+\lambda Q_2\succeq0\}\cap \mathbb{R}_+$ is also an interval (if not empty). Under Assumptions \ref{poreg} and \ref{comnull},   we have the following three cases for $I_{PSD}$.
\begin{con}\label{con1}
The set $I_{PSD}=[\lambda_1,\lambda_2]$ with $\lambda_1<\lambda_2$.
\end{con}
\begin{con}\label{con2}
The set  $I_{PSD}=[\lambda_3,\infty)$.
\end{con}
\begin{con}\label{con3}
The set $I_{PSD}=\{\lambda_4\}$ is a singleton.
\end{con}

Note that Condition \ref{con2} occurs only when $Q_{2}$ is positive semidefinite. Under Condition \ref{con3}, $Q_1$ and  $Q_2$ may not be SD and may have $2\times2$ block pairs in a canonical form under congruence \cite{jiang2017socp}. In this case, when $\lambda$ is given,  the authors in \cite{jiang2017socp} showed how to recover an optimal solution if the optimal solution is attainable, and how to obtain an $\epsilon$ optimal solution if the optimal solution is unattainable.
So in the following, we mainly focus on the cases where either Condition \ref{con1} or \ref{con2} is satisfied.\begin{lem}\label{boundedness}Under Condition \ref{con1} or \ref{con2}, problem (P) is bounded from below.
\end{lem}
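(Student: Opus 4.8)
The plan is to produce, under either condition, one multiplier $\lambda\ge0$ satisfying the criterion of Lemma~\ref{nscon}; the key realization is that an \emph{interior} point of $I_{PSD}$ makes the pencil $Q_1+\lambda Q_2$ positive \emph{definite}, not merely semidefinite, which trivializes the range condition in that criterion.

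First I would observe that under Condition~\ref{con1} the interval $I_{PSD}=[\lambda_1,\lambda_2]$ has nonempty interior $(\lambda_1,\lambda_2)$ (since $\lambda_1<\lambda_2$), and under Condition~\ref{con2} the interval $I_{PSD}=[\lambda_3,\infty)$ has nonempty interior $(\lambda_3,\infty)$. Fix any $\lambda$ in this interior. In either case $\lambda$ is a strict convex combination $\lambda=\theta\lambda'+(1-\theta)\lambda''$ with $\theta\in(0,1)$ of two distinct points $\lambda',\lambda''\in I_{PSD}$: for Condition~\ref{con1} take $\{\lambda',\lambda''\}=\{\lambda_1,\lambda_2\}$, and for Condition~\ref{con2} take $\lambda'=\lambda_3$ together with any $\lambda''>\lambda$. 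Consequently
\[
Q_1+\lambda Q_2=\theta\,(Q_1+\lambda' Q_2)+(1-\theta)\,(Q_1+\lambda'' Q_2)
\]
is a positively weighted sum of two positive semidefinite matrices.

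Next comes the one step with real content: upgrading semidefiniteness to definiteness via Assumption~\ref{comnull}. If $v\in\nl(Q_1+\lambda Q_2)$, then $v^\top(Q_1+\lambda Q_2)v=0$ together with the displayed decomposition and $\theta\in(0,1)$ forces $v^\top(Q_1+\lambda' Q_2)v=v^\top(Q_1+\lambda'' Q_2)v=0$; since both summands are positive semidefinite this yields $(Q_1+\lambda' Q_2)v=(Q_1+\lambda'' Q_2)v=0$. Subtracting and using $\lambda'\neq\lambda''$ gives $Q_2v=0$, whence $Q_1v=0$ as well, so $v\in\nl(Q_1)\cap\nl(Q_2)=\{0\}$ by Assumption~\ref{comnull}. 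Hence $Q_1+\lambda Q_2\succ0$.

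Finally, since $I_{PSD}\subseteq\mathbb{R}_+$ we have $\lambda\ge0$, and since $Q_1+\lambda Q_2$ is invertible, $\ra(Q_1+\lambda Q_2)=\mathbb{R}^n$, so $b_1+\lambda b_2\in\ra(Q_1+\lambda Q_2)$ automatically; Lemma~\ref{nscon} then gives that problem (P) is bounded from below. (One may also conclude directly: for every feasible $x$, $f_1(x)\ge f_1(x)+\lambda f_2(x)=\frac12 x^\top(Q_1+\lambda Q_2)x+(b_1+\lambda b_2)^\top x+\lambda c$, a strongly convex quadratic in $x$, hence bounded below on $\mathbb{R}^n$.) I expect the positive-definiteness step to be the main obstacle; everything else is bookkeeping about the interval $I_{PSD}$ and an appeal to Lemma~\ref{nscon}.
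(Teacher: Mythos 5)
Your proof is correct and follows essentially the same route as the paper: pick an interior point $\lambda_0$ of $I_{PSD}$, observe $Q_1+\lambda_0 Q_2\succ0$ so that the range condition of Lemma~\ref{nscon} holds trivially, and conclude. The only difference is that you explicitly justify the positive-definiteness at interior points via Assumption~\ref{comnull} (a step the paper asserts without proof, and proves only later in Lemma~\ref{SDinterval} for Condition~\ref{con1}), and your justification is sound.
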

\begin{proof}
Under Condition \ref{con1} or \ref{con2}, there exists $\lambda_0$ such that $Q_1+\lambda_{0} Q_2\succ0$ and $\lambda_0\geq0$, which further implies $b_1+\lambda_{0} b_2\in\ra(Q_1+\lambda_{0} Q_2)$ as $Q_1+\lambda Q_2$ is nonsingular. With Lemma \ref{nscon}, we complete the proof.
\end{proof}
\subsection{Convex quadratic reformulation for GTRS}
It is obvious that problem (P) is equivalent to its epigraph reformulation as follows,
\begin{eqnarray*}
{\rm( P_{0})} ~\min\{t:~f_1(x)\leq t,~f_2(x)\leq 0\}.
\end{eqnarray*}
To this end, we are ready to present the main result of this section.
\begin{thm}\label{mainthm}Under Assumption \ref{poreg}, by defining $h_i(x)=f_1(x)+\lambda_if_2(x),~i=1,2,3$, we can reformulate problem $\rm(P)$  to a convex quadratic problem under Conditions \ref{con1} and \ref{con2}, respectively:
\begin{enumerate}
\item  Under Condition \ref{con1}, problem $\rm(P)$ is equivalent to the following convex quadratic problem, $${\rm(P_1)}~\min_{x,t}\{t:~h_1(x)\leq t,~ h_{2}(x)\leq t\};$$
\item Under Condition \ref{con2},  problem $\rm(P)$ is equivalent to the following convex quadratic problem,
$${\rm(P_2)}~\min_{x,t}\{t:~h_{3}(x)\leq t, ~f_2(x)\leq 0\}=\min_{x}\{h_{3}(x):~ f_2(x)\leq 0\}.$$
\end{enumerate}
\end{thm}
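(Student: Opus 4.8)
The plan is to prove each equivalence first at the level of optimal values, through the Lagrangian dual of $(\mathrm{P})$, and then treat solution recovery. Throughout, write $g(\mu):=\inf_{x}\{f_1(x)+\mu f_2(x)\}$ for $\mu\ge 0$, and recall that $(\mathrm{P})$ is equivalent to its epigraph form $(\mathrm{P}_0)$. First I would record two facts used in both cases. (i) \emph{Strong duality}: since the Slater condition holds and $v(\mathrm{P})$ is finite by Lemma \ref{boundedness}, applying the S-lemma (Lemma \ref{slemma}) with $\tilde f_1=f_1-v(\mathrm{P})+\epsilon$ (whose negativity region $\{\tilde f_1<0\}$ is disjoint from $\{f_2\le 0\}$) produces, for every $\epsilon>0$, some $\mu_\epsilon\ge 0$ with $f_1(x)+\mu_\epsilon f_2(x)\ge v(\mathrm{P})-\epsilon$ for all $x$, i.e. $g(\mu_\epsilon)\ge v(\mathrm{P})-\epsilon$; combined with weak duality this gives $v(\mathrm{P})=\sup_{\mu\ge 0}g(\mu)$. (ii) For $\mu\ge 0$ with $\mu\notin I_{PSD}$, the matrix $Q_1+\mu Q_2$ has a negative eigenvalue, so $f_1+\mu f_2$ decreases to $-\infty$ along the associated eigendirection and $g(\mu)=-\infty$; hence the supremum in (i) is attained over $\mu\in I_{PSD}$. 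Note also that each $h_i=f_1+\lambda_i f_2$ is a convex quadratic because $\lambda_i\in I_{PSD}$, so $(\mathrm{P}_1)$ and $(\mathrm{P}_2)$ are genuinely convex.

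For Condition \ref{con1}, the key observation is the pointwise identity $H(x)=\max\{h_1(x),h_2(x)\}=\max_{\mu\in[\lambda_1,\lambda_2]}\{f_1(x)+\mu f_2(x)\}$, which holds because $f_1(x)+\mu f_2(x)$ is affine in $\mu$ and thus maximized over the interval at an endpoint; hence $v(\mathrm{P}_1)=\min_x H(x)$. For $v(\mathrm{P}_1)\le v(\mathrm{P})$: on the feasible set $\{f_2\le 0\}$ one has $H(x)=f_1(x)+\lambda_1 f_2(x)\le f_1(x)$ since $\lambda_1\ge 0$, so $\min_x H(x)\le\min\{f_1(x):f_2(x)\le 0\}=v(\mathrm{P})$. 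For $v(\mathrm{P}_1)\ge v(\mathrm{P})$: for each $\mu\in[\lambda_1,\lambda_2]$ the pointwise bound $f_1(x)+\mu f_2(x)\le H(x)$ yields $g(\mu)\le\inf_x H(x)=v(\mathrm{P}_1)$, and taking the supremum over $\mu\in[\lambda_1,\lambda_2]=I_{PSD}$ together with facts (i)--(ii) gives $v(\mathrm{P})\le v(\mathrm{P}_1)$.

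For Condition \ref{con2}, $Q_2\succeq0$ because $I_{PSD}$ is unbounded, so $f_2$ and $h_3$ are convex and the two displayed forms of $(\mathrm{P}_2)$ coincide. The bound $v(\mathrm{P}_2)\le v(\mathrm{P})$ follows again from $h_3(x)=f_1(x)+\lambda_3 f_2(x)\le f_1(x)$ on $\{f_2\le 0\}$. For the reverse, fix $\mu\ge\lambda_3$; for every $x$ with $f_2(x)\le 0$ we have $h_3(x)\ge f_1(x)+\mu f_2(x)\ge g(\mu)$, the first inequality using $(\mu-\lambda_3)f_2(x)\le 0$. Hence $v(\mathrm{P}_2)\ge\sup_{\mu\ge\lambda_3}g(\mu)$, which equals $v(\mathrm{P})$ by facts (i)--(ii) since $I_{PSD}=[\lambda_3,\infty)$.

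It remains to recover a solution of $(\mathrm{P})$ from an optimizer $x^{\ast}$ of $(\mathrm{P}_1)$ (equivalently, a minimizer of $H$) or of $(\mathrm{P}_2)$. If $x^{\ast}$ makes the relevant quadratic constraint active, $f_2(x^{\ast})=0$, then $h_i(x^{\ast})=f_1(x^{\ast})$, so $x^{\ast}$ is feasible and optimal for $(\mathrm{P})$. The delicate case is $f_2(x^{\ast})\ne 0$: then $x^{\ast}$ lies in the region where the objective is smooth and equals a single convex quadratic $h_i$, so first-order optimality forces $\nabla h_i(x^{\ast})=0$, i.e. $x^{\ast}$ minimizes $h_i$ over $\mathbb{R}^n$ and $\lambda_i$ is an optimal dual multiplier; one then obtains a genuine optimal solution of $(\mathrm{P})$ by shifting $x^{\ast}$ along $\mathrm{Null}(Q_1+\lambda_i Q_2)$ to a point where $f_2$ vanishes, which is a scalar quadratic equation along that direction. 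I expect this hard-case adjustment, together with verifying that $(\mathrm{P})$ actually attains its optimum under Condition \ref{con1} or \ref{con2}, to be the main obstacle, being the precise analogue of the classical hard case of the trust region subproblem.
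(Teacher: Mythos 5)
Your proof is correct, and for the part of the theorem that the paper actually proves here (equality of optimal values), it takes a genuinely different route in the step that links $\rm(P_1)$ to $\rm(P)$. Both arguments rest on the same foundation — S-lemma-based strong duality for the nonconvex problem $\rm(P)$, i.e. $v(\rm P)=\sup_{\mu\ge0}\inf_x\{f_1(x)+\mu f_2(x)\}$, together with the observation that the inner infimum is $-\infty$ off $I_{PSD}$ — but they diverge afterwards. The paper shows $v({\rm P_1})\le v(\rm P)$ by a feasible-region containment (aggregation) argument, then forms the Lagrangian dual $\rm(D_1)$ of $\rm(P_1)$, exhibits a feasible dual point matching $v(\rm D)$, and closes the chain by invoking convex strong duality $v({\rm P_1})=v({\rm D_1})$. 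You instead identify $v({\rm P_1})=\min_x\max_{\mu\in[\lambda_1,\lambda_2]}\{f_1(x)+\mu f_2(x)\}$ (the max over the interval of an affine function of $\mu$ is attained at an endpoint) and sandwich this directly between $\max_\mu\min_x$ and the restriction to the feasible set; this avoids dualizing $\rm(P_1)$ altogether and is arguably more elementary and self-contained, at the price of being specific to the two-endpoint structure, whereas the paper's duality argument is the one it reuses verbatim for Corollary \ref{cqr2} and for the comparison with the CQR of \cite{fujie1997semidefinite}. Your Condition \ref{con2} argument is likewise a clean one-sided version of the same sandwich and is correct. One caveat: your final paragraph on recovering an optimal solution of $\rm(P)$ is not needed for this theorem as the paper proves it — that is the content of the separate Theorem \ref{recover} — and as you yourself flag, it is only a sketch there (e.g., you do not verify that $f_2$ actually attains the value $0$ along the null-space direction, which requires the positivity of $d^\top A_1 d$ on $\nl(A_2)$ from Lemma \ref{SDinterval}); so treat that part as an outline rather than a completed argument.
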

\begin{proof}
Let us first consider the case where Condition \ref{con1} holds. Due to Lemma \ref{boundedness}, $\rm(P_1)$ is bounded from below. Together with the assumed Slater conditions,  problem $\rm(P_1)$  admits the same optimal value as its  Lagrangian dual \cite{ben2001lectures}. Due to the S-lemma, problem (P) also has the same optimal value as its Lagrangian dual \cite{sturm2003cones},
$${\rm(D)}~\max_{\mu\geq0}\min_{x} f_1(x)+\mu f_2(x).$$
Under Condition \ref{con1}, i.e., $I_{PSD}=[\lambda_1,\lambda_2]$ with $\lambda_1<\lambda_2$, it is easy to show that $\rm(P_1)$ is a relaxation of $\rm(P_0)$ since they have the same objective function and the feasible region of $\rm(P_1)$ contains that of $\rm(P_0)$ (note that $f_1\leq t$ and $f_2\leq0$ imply that $f_1(x)-t+uf_2(x)\leq0$ for all $u\geq0$). Thus, \begin{equation}\label{ineq1}
v({\rm P_1})\leq v({\rm P_0})=v(\rm P).
\end{equation}
The Lagrangian dual problem of $\rm(P_1)$ is
$${\rm(D_1)}~\max_{s_1,s_2\geq0}\min_{x,t} t+(s_1+s_2)(f_1(x)-t)+(\lambda_1s_1+\lambda_2s_2) f_2(x).$$
For any primal and dual optimal solution pair $(x^*,u^*)$ of (P) and (D), due to  $u^*\in[\lambda_1,\lambda_2]$ as $Q_1+\mu^{*} Q_2\succeq0$ from Lemma \ref{nscon}, we can always find a convex combination $\lambda_1\bar s_1+\lambda_2\bar s_2=\mu^*$ with $\bar s_1+\bar s_2=1$. Hence $(x^*,\bar s,t)$, with an arbitrary $t$, is a feasible solution  to  $\rm(D_1)$ and  the objective value of problem $(D_1)$ at  $(x^*,\bar s,t)$ is the same with the optimal value of $\rm(D)$. This in turn implies
\begin{equation}\label{ineq2}
v{\rm(D_1)}\geq v{\rm(D)}.
\end{equation}
Since $\rm(P_1)$ is convex and Slater condition is satisfied (because $\rm(P_1)$ is a relaxation of (P) and Slater condition is assumed for (P)), $v{\rm(P_1)}=v{\rm(D_1)}$. Finally, by combining (\ref{ineq1}) and (\ref{ineq2}), we have $v{\rm(P_1)}=v{\rm(D_1)}\geq v{\rm(D)}=v{\rm(P)}=v{\rm(P_0)}\geq v{\rm(P_1)}$. So all inequalities above become equalities and thus $\rm(P_1)$ is equivalent to (P).

Statement 2 can be proved in a similar way and is thus omitted.
\end{proof}
\begin{rem}Reformulation $\rm(P_2)$  generalizes the approaches in \cite{flippo1996duality,wang2016linear,ho2017second} for the classical TRS with the unit ball constraint to the GTRS with a general convex quadratic constraint.
\end{rem}

 To our best knowledge, there is no method in the literature to compute $\lambda_1$ and $\lambda_2$  in Condition \ref{con1} for general $Q_1$ and $Q_2$. However, there exist  efficient methods in the literature to compute   $\lambda_1$ and $\lambda_2$  when a $\lambda_0$ is given such that $Q_1+\lambda_0Q_2\succ0$ is satisfied. More specifically, the method mentioned in Section 2.4.1 in \cite{adachi2016eigenvaluebased} gives a way to compute $\lambda_1$ and $\lambda_2$: first detect a $\lambda_0$ such that $Q_{0} 瞿繙:=Q_1+\lambda_0Q_2\succ0$, and then compute $\lambda_1$ and $\lambda_2$ by some generalized eigenvalues  for a definite matrix pencil that are nearest to 0. Please refer to \cite{guo2009improved} for one of the state-of-the-art methods for detecting $\lambda_0$. We can also find another iterative method in Section 5 \cite{more1993generalizations} to compute $\lambda_0\in {\rm int}(I_{PSD})$ by reducing the length of an interval $[\bar \lambda_1,\bar \lambda_2]\supset I_{PSD}$.
We next report our new method to compute $\lambda_1$ and $\lambda_2$, which is motivated by \cite{pong2014generalized}.
 Our first step is also to find a $\lambda_0$ such that  $Q_{0}:=Q_1+\lambda_0Q_2\succ0$. Then we compute the maximum generalized eigenvalues for $Q_2+\mu Q_{0}$ and $-Q_2+\mu Q_{0}$, denoted by $u_1$ and $u_2$, respectively. Note that both  $u_1>0$ and $u_2>0$   due to  $Q_0\succ0$ and $Q_2$ has a negative eigenvalue.  So we have
$$Q_1+(\frac{1}{u_1}+\lambda_0) Q_2\succeq0~ \text{ and }~ Q_1+(-\frac{1}{u_2}+\lambda_0) Q_2\succeq0.$$
Thus $Q_1+\eta Q_2\succeq0$ for all $\eta\in[\lambda_0-\frac{1}{u_2},\lambda_0+\frac{1}{u_1}]$, which implies $\lambda_1=\lambda_0-\frac{1}{u_2} $ and $\lambda_2=\lambda_0+\frac{1}{u_1}$.
In particular, when one of $Q_1$ and $Q_2$ is positive definite, we can skip the step of detecting the definiteness, which would save significant time in implementation.

In fact, when $\lambda_0$ is given, we only need to compute one extreme eigenvalues, either $\lambda_1$ or $\lambda_2$, to obtain our convex quadratic reformulation.  Define $x(\lambda)=-(Q_1+\lambda Q_2)^{-1}(b_1+\lambda b_2)$ for all $\lambda\in{\rm int}(I_{PSD})$ and define $\gamma(\lambda)=f_{2}(x(\lambda))$.
 After we have computed $\lambda_0$ such that $\lambda_0\in{\rm int}(I_{PSD})$, under Assumption \ref{comnull},  we further have $Q_1+\lambda_0Q_2\succ0$, which makes $(Q_1+\lambda Q_2)^{-1}$ well defined.
In fact, there are Newton type methods in the literature (e.g., \cite{more1993generalizations}) for solving the GTRS by finding the optimal $\lambda$ through  $\gamma(\lambda)=0$.  However, each step in \cite{more1993generalizations} involves solving a linear system $-(Q_1+\lambda Q_2)^{-1}(b_1+b_2)$, which is time consuming for high-dimension settings. Moreover, the Newton's method does not converge in the so called  hard case\footnote{The definition here follows \cite{more1993generalizations}. In fact, the definitions of hard case and easy case of the GTRS are similar to those of the TRS. More specifically, if the null space of the Hessian matrix, $Q_{1}+\lambda^* Q_{2}$, with  $\lambda^{*} $ being the optimal Lagrangian multiplier of problem (P), is orthogonal to $b_1+\lambda^{*}  b_2$, we are in the hard case; otherwise we are in the easy case.}. On the other hand, for easy case, an initial $\lambda$ in  $I_{PSD}$ is also needed as a safeguard to guarantee the positive definiteness of $Q_1+\lambda Q_2$ \cite{more1993generalizations}.
It is shown in \cite{more1993generalizations} that $\gamma(\lambda)$ is either a strictly decreasing function or a constant in ${\rm int(}I_{PSD})$. Following  \cite{adachi2016eigenvaluebased}, we have the following three cases: if $\gamma(\lambda_0)>0$, the optimal $\lambda^*$ locates in $[\lambda_0,\lambda_2]$; if $\gamma(\lambda_0)=0$, $x(\lambda_0)$ is an optimal solution; and if $\gamma(\lambda_0)<0$, the optimal $\lambda^*$ locates in $[\lambda_1,\lambda_0]$.
Hence we have the following corollary, whose proof is similar to that in Theorem \ref{mainthm} and thus omitted.\begin{cor}\label{cqr2}
Assume that Assumption \ref{poreg} holds and define $h_i(x)=f_1(x)+\lambda_if_2(x), i = 0, 1, 2$.
 Under Condition \ref{con1}, the following results hold true.
\begin{enumerate}
\item If $\gamma(\lambda_0)>0$, problem $\rm(P)$ is equivalent to the following convex quadratic problem,
$${\rm(\overline{P}_1)}~\min_{x,t}\{t:~h_0(x)\leq t,~ h_{2}(x)\leq t\}.$$
\item  If $\gamma(\lambda_0)=0$, $x(\lambda_0)=-(Q_1+\lambda_{0} Q_2)^{-1}(b_1+\lambda_{0} b_2)$  is the optimal solution.
\item If $\gamma(\lambda_0)<0$, problem $\rm(P)$ is equivalent to the following convex quadratic problem,
$${\rm(\underline{P}_1)}~\min_{x,t}\{t:~h_1(x)\leq t,~ h_{0}(x)\leq t\}.$$\end{enumerate}
\end{cor}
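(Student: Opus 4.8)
The plan is to reuse, almost verbatim, the weak-duality/strong-duality chain from the proof of Theorem~\ref{mainthm}, the only change being that the endpoint pair $(\lambda_1,\lambda_2)$ is replaced by $(\lambda_0,\lambda_2)$ in Statement~1 and by $(\lambda_1,\lambda_0)$ in Statement~3; Statement~2 is handled directly. Throughout, Condition~\ref{con1} together with Assumption~\ref{comnull} gives $Q_1+\lambda_0 Q_2\succ0$ for $\lambda_0\in{\rm int}(I_{PSD})$, so $h_0$ is a strongly convex quadratic with unique unconstrained minimizer $x(\lambda_0)=-(Q_1+\lambda_0 Q_2)^{-1}(b_1+\lambda_0 b_2)$, and in particular both $\rm(\overline{P}_1)$ and $\rm(\underline{P}_1)$ are bounded below: any feasible $(x,t)$ satisfies $t\ge h_0(x)\ge\min_x h_0(x)>-\infty$.

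First I would dispose of Statement~2. When $\gamma(\lambda_0)=f_2(x(\lambda_0))=0$, the point $x(\lambda_0)$ is feasible for $\rm(P)$, and for every feasible $x$ of $\rm(P)$ we have $f_1(x)\ge f_1(x)+\lambda_0 f_2(x)=h_0(x)\ge h_0(x(\lambda_0))=f_1(x(\lambda_0))$, where the first inequality uses $\lambda_0\ge0$ and $f_2(x)\le0$ and the second uses that $x(\lambda_0)$ minimizes $h_0$. Hence $x(\lambda_0)$ is optimal for $\rm(P)$.

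For Statement~1, I would argue exactly as in Theorem~\ref{mainthm}. Since $\lambda_0,\lambda_2\in I_{PSD}\subseteq\mathbb{R}_+$, the constraints $f_1(x)\le t$, $f_2(x)\le0$ of the epigraph form $\rm(P_0)$ imply $h_0(x)\le t$ and $h_2(x)\le t$, so $\rm(\overline{P}_1)$ is a relaxation of $\rm(P_0)$ and $v(\overline{\rm P}_1)\le v(\rm P_0)=v(\rm P)$. For the reverse inequality I would write the Lagrangian dual $\rm(\overline{D}_1)$ of $\rm(\overline{P}_1)$, namely $\max_{s_1,s_2\ge0}\min_{x,t}\ t+(s_1+s_2)(f_1(x)-t)+(\lambda_0 s_1+\lambda_2 s_2)f_2(x)$, take a primal--dual optimal pair $(x^*,\mu^*)$ of $\rm(P)$ and $\rm(D)$, and write $\mu^*$ as a convex combination $\mu^*=\lambda_0\bar s_1+\lambda_2\bar s_2$ with $\bar s_1+\bar s_2=1$, $\bar s\ge0$; this produces a feasible point of $\rm(\overline{D}_1)$ with objective value $v(\rm D)$, so $v(\overline{\rm D}_1)\ge v(\rm D)$. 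Combining this with strong duality $v(\overline{\rm P}_1)=v(\overline{\rm D}_1)$ (convexity of $\rm(\overline{P}_1)$, boundedness, and the Slater condition, inherited from $\rm(P)$ since $\rm(\overline{P}_1)$ is a relaxation of it) and with $v(\rm D)=v(\rm P)$ (strong duality for the GTRS via the S-lemma), we obtain $v(\overline{\rm P}_1)=v(\overline{\rm D}_1)\ge v(\rm D)=v(\rm P)=v(\rm P_0)\ge v(\overline{\rm P}_1)$, forcing equality throughout. Statement~3 is the mirror image, with $\mu^*=\lambda_1\bar s_1+\lambda_0\bar s_2$.

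The only step that is not a literal transcription of Theorem~\ref{mainthm}, and hence the part that actually needs care, is the legitimacy of the convex-combination step with the \emph{narrower} interval: one must know that $\mu^*\in[\lambda_0,\lambda_2]$ in Statement~1 and $\mu^*\in[\lambda_1,\lambda_0]$ in Statement~3, rather than merely $\mu^*\in[\lambda_1,\lambda_2]$ as in the theorem. This is precisely the content of the discussion preceding the corollary, where $\gamma(\lambda)=f_2(x(\lambda))$ is either strictly decreasing or constant on ${\rm int}(I_{PSD})$ and the optimal multiplier $\mu^*$ satisfies $\gamma(\mu^*)=0$ in the easy case by complementary slackness at the active constraint (the hard/boundary situations being covered by the same monotonicity together with Lemma~\ref{nscon}); hence $\gamma(\lambda_0)>0$ places $\mu^*$ at or to the right of $\lambda_0$, i.e. $\mu^*\in[\lambda_0,\lambda_2]$, while $\gamma(\lambda_0)<0$ places it at or to the left of $\lambda_0$, i.e. $\mu^*\in[\lambda_1,\lambda_0]$. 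With that established, the remainder is the bookkeeping already carried out for Theorem~\ref{mainthm}.
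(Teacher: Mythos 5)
Your proposal is correct and follows essentially the same route the paper intends: the paper omits the proof precisely because it is "similar to that in Theorem \ref{mainthm}", and your argument is that proof transcribed with the narrower multiplier interval, while the one genuinely new ingredient you isolate --- that $\gamma(\lambda_0)>0$ (resp.\ $<0$) forces the optimal multiplier into $[\lambda_0,\lambda_2]$ (resp.\ $[\lambda_1,\lambda_0]$) via the monotonicity of $\gamma$ --- is exactly what the paper establishes in the discussion immediately preceding the corollary. No gaps.
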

Since both ${\rm(\overline{P}_1)}$  and $\rm(\underline{P}_1)$ have a similar form to $\rm (P_1)$ and can be solved in a way similar to the solution approach for $\rm(P_1)$, we only discuss how to solve $\rm (P_1)$ in the following.
\subsection{Recovery of optimal solutions}
In this subsection, we will discuss the recovery of an optimal solution to problem (P)\ from an optimal solution to reformulation $\rm(P_1)$.
Before
that, we first introduce the following lemma. Let us assume from now on $h_i(x)=\frac{1}{2}x^\top A_ix+a_i^\top x+r_i$, $i=1,2$. \begin{lem}\label{SDinterval}
If Condition \ref{con1} holds, $A_1$ and $A_2$ are simultaneously diagonalizable. Moreover, we have   $d^\top A_1d>0$  for all nonzero vector
$d\in\nl(A_{2})$.\end{lem}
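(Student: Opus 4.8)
The plan is to exploit the definitions $h_i(x) = f_1(x) + \lambda_i f_2(x)$, so that $A_i = Q_1 + \lambda_i Q_2$ for $i=1,2$, where $\lambda_1 < \lambda_2$ are the endpoints of the interval $I_{PSD}$ in Condition~\ref{con1}. The key algebraic observation is that $Q_2$ can be recovered as a scalar multiple of the difference $A_2 - A_1 = (\lambda_2 - \lambda_1) Q_2$, and $Q_1$ as a corresponding combination; concretely $Q_2 = \frac{1}{\lambda_2-\lambda_1}(A_2 - A_1)$ and $Q_1 = \frac{\lambda_2 A_1 - \lambda_1 A_2}{\lambda_2 - \lambda_1}$. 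Thus $A_1$ and $A_2$ are simultaneously diagonalizable if and only if $Q_1$ and $Q_2$ are; so the first claim reduces to showing the pencil $\{Q_1, Q_2\}$ is SD. Since by Condition~\ref{con1} there is $\lambda_0 \in (\lambda_1,\lambda_2)$ with $Q_1 + \lambda_0 Q_2 \succ 0$, the pencil contains a positive definite member, and it is classical (e.g. by the congruence transformation $C = (Q_1+\lambda_0 Q_2)^{-1/2}$, which sends $Q_1+\lambda_0 Q_2$ to $I$ and $Q_2$ to a symmetric matrix that can be orthogonally diagonalized) that a definite pencil is simultaneously diagonalizable by congruence; since $A_1,A_2$ are in $\mathrm{span}\{Q_1,Q_2\}$, the same congruence diagonalizes them as well.

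For the second claim, I would work in the coordinates provided by this simultaneous diagonalization. After the congruence $C$, we may assume $A_2 = C^\top(Q_1+\lambda_2 Q_2)C$ is diagonal with nonnegative entries (it is $\succeq 0$ since $\lambda_2 \in I_{PSD}$), and likewise $A_1 = C^\top(Q_1+\lambda_1 Q_2)C \succeq 0$ is diagonal. Let $d$ be a nonzero vector with $A_2 d = 0$ (I will argue about the transformed $d$; the statement is invariant under the congruence up to relabeling because $d \in \nl(A_2)$ transforms to $\nl(C^\top A_2 C)$ and the quadratic form $d^\top A_1 d$ transforms accordingly). Write $A_1 = A_2 + (\lambda_1 - \lambda_2) \tilde Q_2$ where $\tilde Q_2 = C^\top Q_2 C$ is the diagonalized constraint matrix; then $d^\top A_1 d = d^\top A_2 d + (\lambda_1 - \lambda_2) d^\top \tilde Q_2 d = (\lambda_1 - \lambda_2) d^\top \tilde Q_2 d$, using $A_2 d = 0$. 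Since $\lambda_1 - \lambda_2 < 0$, it suffices to show $d^\top \tilde Q_2 d < 0$ for every nonzero $d \in \nl(A_2)$.

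To see $d^\top \tilde Q_2 d < 0$: on the coordinate subspace where $A_2$ vanishes, the diagonal entry of $A_2 = A_1 + (\lambda_2-\lambda_1)\tilde Q_2$ is $0$, so on that subspace $A_1 = (\lambda_2-\lambda_1)(-\tilde Q_2)\big|_{\nl(A_2)}$, hence $d^\top \tilde Q_2 d = -\frac{1}{\lambda_2-\lambda_1} d^\top A_1 d \le 0$ since $A_1 \succeq 0$. So $d^\top A_1 d \ge 0$ is immediate; the real content is \emph{strict} positivity, i.e. ruling out $d^\top A_1 d = 0$, equivalently $d \in \nl(A_1) \cap \nl(A_2)$. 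Now $\nl(A_1)\cap\nl(A_2) = \nl(Q_1+\lambda_1 Q_2)\cap\nl(Q_1+\lambda_2 Q_2)$, and any $d$ in this intersection satisfies $(Q_1+\lambda_1 Q_2)d = (Q_1+\lambda_2 Q_2)d = 0$, whence subtracting gives $(\lambda_2-\lambda_1)Q_2 d = 0$, so $Q_2 d = 0$ and then $Q_1 d = 0$; that is, $d \in \nl(Q_1)\cap\nl(Q_2) = \{0\}$ by Assumption~\ref{comnull}. Hence for nonzero $d \in \nl(A_2)$ we have $d \notin \nl(A_1)$, so $d^\top A_1 d > 0$, which is exactly the claim. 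The main obstacle is being careful that the simultaneous diagonalization / congruence bookkeeping is done consistently (the statement is about $\nl(A_2)$ and the form $d^\top A_1 d$, both of which behave covariantly under congruence), but once that is set up the inequality follows cleanly from $A_1 \succeq 0$, $A_2 \succeq 0$, $\lambda_1 < \lambda_2$, and the trivial common null space hypothesis.
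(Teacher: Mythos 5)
Your proof is correct, and the first half (simultaneous diagonalizability) is essentially the paper's argument: both of you exploit that the pencil contains a positive definite member and apply the classical congruence construction — the paper Cholesky-factors $\tfrac{A_1+A_2}{2}\succ 0$ and then orthogonally diagonalizes, while you use $(Q_1+\lambda_0Q_2)^{-1/2}$; these are interchangeable. For the strict positivity $d^\top A_1 d>0$ on $\nl(A_2)\setminus\{0\}$ you take a mildly different route: the paper works entrywise in the diagonalized coordinates, writing $S^\top A_iS=\dg(\cdot)$ and noting that $q_i=0$ forces $p_i>0$ because $A_1+A_2\succ0$; you instead observe that $A_1\succeq0$ makes $d^\top A_1d=0$ equivalent to $A_1d=0$, and that $\nl(A_1)\cap\nl(A_2)=\nl(Q_1)\cap\nl(Q_2)=\{0\}$ by Assumption~\ref{comnull}. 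Your version is coordinate-free and does not actually need the diagonalization for this part, which is a small gain in cleanliness; the two arguments rest on the same underlying facts, since $A_1+A_2\succ0$ is itself a consequence of Condition~\ref{con1} plus Assumption~\ref{comnull}. The only cosmetic criticism is that your middle paragraph (reducing to $d^\top \tilde Q_2 d<0$ and then re-deriving $d^\top A_1d\ge0$) is a detour you immediately abandon; the final null-space argument is self-contained and is all you need.
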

\begin{proof}
Note that Condition \ref{con1} and Assumption \ref{comnull} imply that $Q_1+\frac{\lambda_1+\lambda_2}{2}Q_2\succ0 $, i.e.,  $\frac{A_1+A_2}{2}\succ0$. Let $A_0=\frac{A_1+A_2}{2}$ and $A_{0}=L^\top L$ be its Cholesky decomposition, where $L$ is a nonsingular symmetric matrix. Also let $(L^{-1})^\top A_1L^{-1}=P^\top DP$ be the spectral decomposition, where $P$ is an orthogonal matrix and $ D$ is a diagonal matrix. Then we have  $(L^{-1}P^{-1})^\top A_1L^{-1}P^{-1}=D$ and $$(L^{-1}P^{-1})^\top A_2L^{-1}P^{-1}=(L^{-1}P^{-1})^\top A_0L^{-1}P^{-1}-(L^{-1}P^{-1})^\top A_1L^{-1}P^{-1}=I-D.$$Hence $A_1$ and $A_2$ are simultaneously diagonalizable by the congruent matrix $L^{-1}P^{-1}$.

Now let us assume $S=L^{-1}P^{-1}$ and thus $S^\top A_1S=\dg(p_1,\ldots,p_n) $ and $S^\top A_2S=\dg(q_1,\ldots,q_n)$ are both diagonal matrices. Define  $K=\{i:q_i=0,i=1,\ldots,n\}$.  Since $A_1+A_2\succ0$, $p_i>0$ for all $i\in K$. Let $e_i$ be  the $n$-dimensional
vector with $i$th entry being 1 and all others being 0s. We have $(Se_i)^\top A_{1}Se_i=p_i>0$ for all $i\in K$. On the other hand, $A_2Se_i=0$ for all $i\in K$. Hence    $d^\top A_1d>0$ holds for all nonzero vector
$d\in\nl(A_{2})$.
\end{proof}

From Lemma \ref{boundedness}, Condition \ref{con1} implies the boundedness of problem (P) and thus the optimal solution is always attainable \cite{jiang2017socp}.
 In the following theorem, we show how to recover the optimal solution of problem (P) from an optimal solution of problem $\rm(P_1)$.
\begin{thm}\label{recover}
 Assume that Condition \ref{con1} holds and $x^*$ is an optimal solution of problem $\rm(P_1)$. Then an optimal solution of problem $\rm(P)$ can be obtained in the following ways:
\begin{enumerate}
\item If $h_1(x^*)= t$ and $h_2(x^*)\leq t$, then $x^*$ is an optimal solution to $\rm(P)$;
\item  Otherwise $h_1(x^*)<t$ and $h_2(x^*)= t$.
For any vector $v_l\in \nl(A_2)$, let $\tilde \theta$ be a solution of the following equation,
 \begin{equation}\label{theta}
 h_1(x^*+\theta v_l)=\frac{1}{2}v_l^\top A_1v_{l}\theta^2 +(v_l^\top A_1x^{*  }+a_{1}^\top v_{l})\theta+h_1(x^*)=t.
 \end{equation}
Then $\{\tilde x: \tilde x=x^*+\tilde\theta v_{l},v_l\in\nl(A_2), \tilde \theta \text{ is a solution of (\ref{theta})}\}$ forms the set of optimal solutions of  $\rm(P)$.
\end{enumerate}
\end{thm}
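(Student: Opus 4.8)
The plan is to split into the two cases already separated in the statement and to leverage two facts established earlier: that $v(\mathrm{P}_1)=v(\mathrm{P})$ (Theorem \ref{mainthm}), and that $A_1,A_2$ are simultaneously diagonalizable with $d^\top A_1 d>0$ for all nonzero $d\in\nl(A_2)$ (Lemma \ref{SDinterval}). Throughout, recall $h_1=f_1+\lambda_1 f_2$ and $h_2=f_1+\lambda_2 f_2$ with $\lambda_1<\lambda_2$, so that $f_2(x)=\bigl(h_2(x)-h_1(x)\bigr)/(\lambda_2-\lambda_1)$ and $f_1(x)=\bigl(\lambda_2 h_1(x)-\lambda_1 h_2(x)\bigr)/(\lambda_2-\lambda_1)$; these two identities are the workhorses for translating between $(h_1,h_2)$ values and $(f_1,f_2)$ values.

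For Case 1, suppose $h_1(x^*)=t$ and $h_2(x^*)\le t$. Then $f_2(x^*)=(h_2(x^*)-h_1(x^*))/(\lambda_2-\lambda_1)\le 0$, so $x^*$ is feasible for $\mathrm{P}$. Moreover $f_1(x^*)=\bigl(\lambda_2 h_1(x^*)-\lambda_1 h_2(x^*)\bigr)/(\lambda_2-\lambda_1)\le \bigl(\lambda_2 t-\lambda_1 t\bigr)/(\lambda_2-\lambda_1)=t=v(\mathrm{P}_1)=v(\mathrm{P})$, where the inequality uses $\lambda_1\ge 0$ (hence $-\lambda_1 h_2(x^*)\le -\lambda_1 t$) — here one needs $\lambda_1\ge 0$, which holds since $I_{PSD}\subseteq\mathbb{R}_+$. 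Thus $x^*$ attains the optimal value of $\mathrm{P}$ while being feasible, so it is optimal.

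For Case 2 we have $h_1(x^*)<t$ and $h_2(x^*)=t$; I first argue the perturbation stays along $\nl(A_2)$ well-defined. For $v_l\in\nl(A_2)$, $h_2(x^*+\theta v_l)=h_2(x^*)+\theta(\nabla h_2(x^*))^\top v_l$ — wait, since $A_2 v_l=0$ we get $h_2(x^*+\theta v_l)=h_2(x^*)+\theta(A_2 x^*+a_2)^\top v_l$, which is affine in $\theta$; I must check $(A_2 x^*+a_2)^\top v_l=0$, i.e.\ that $\nabla h_2(x^*)\perp\nl(A_2)$. This follows from optimality of $x^*$ for $\mathrm{P}_1$: the KKT conditions at the active constraint $h_2$ (and possibly $h_1$, but $h_1$ is inactive here) give $\nabla h_2(x^*)$ in the range of $A_2$ plus multiples of $\nabla h_1$ — more carefully, since only $h_2\le t$ is active, stationarity in $x$ forces $\nabla h_2(x^*)=0$, hence trivially orthogonal to everything; I will spell this out. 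Consequently $h_2$ is constant $=t$ along the line $x^*+\theta v_l$. Meanwhile, by Lemma \ref{SDinterval}, $v_l^\top A_1 v_l>0$, so $\theta\mapsto h_1(x^*+\theta v_l)$ is a strictly convex quadratic with $h_1(x^*)<t$, hence equation (\ref{theta}) has exactly two real roots (or we pick the relevant one). At such a root $\tilde\theta$, set $\tilde x=x^*+\tilde\theta v_l$: then $h_1(\tilde x)=t=h_2(\tilde x)$, so $f_2(\tilde x)=0$ and $f_1(\tilde x)=t=v(\mathrm{P})$, proving $\tilde x$ is optimal for $\mathrm{P}$. Conversely, given any optimal $\bar x$ of $\mathrm{P}$, complementary slackness at the optimal multiplier $\mu^*\in(\lambda_1,\lambda_2)$ forces $f_2(\bar x)=0$ and $h_1(\bar x)=h_2(\bar x)=f_1(\bar x)=t$; writing $\bar x=x^*+w$ and using the simultaneous diagonalization to decompose $w$ along the $A_1$-positive directions and $\nl(A_2)$, one shows the component outside $\nl(A_2)$ must vanish (else $h_1$ or $h_2$ would exceed $t$ or $\mathrm{P}_1$-feasibility would be violated), so $w\in\nl(A_2)$ and $\tilde\theta$-type; I expect this converse direction — precisely, showing every $\mathrm{P}$-optimal solution has the stated form and not merely that the stated points are $\mathrm{P}$-optimal — to be the main obstacle, requiring a careful argument in the simultaneously-diagonalized coordinates that the feasible set of $\mathrm{P}_1$ restricted to the optimal level set is exactly the affine subspace $x^*+\nl(A_2)$.
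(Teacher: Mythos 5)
Your Case 1 argument contains a sign error that is fatal to that step. From $h_2(x^*)\le t$ and $\lambda_1\ge 0$ you get $\lambda_1 h_2(x^*)\le \lambda_1 t$, hence $-\lambda_1 h_2(x^*)\ge -\lambda_1 t$ --- the opposite of what you wrote. Carrying the correct inequality through the identity $f_1=(\lambda_2 h_1-\lambda_1 h_2)/(\lambda_2-\lambda_1)$ gives $f_1(x^*)\ge t$, which is the trivial direction (any point feasible for (P) satisfies $f_1\ge v(\rm P)$) and does not yield optimality. Equivalently, $f_1(x^*)=h_1(x^*)-\lambda_1 f_2(x^*)=t-\lambda_1 f_2(x^*)$, so optimality of $x^*$ for (P) is exactly the complementary-slackness condition $\lambda_1 f_2(x^*)=0$, which your argument does not establish (and which the paper's own proof of Case 1 also leaves implicit: it only verifies feasibility $f_2(x^*)\le 0$). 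The missing step cannot be recovered from the stated hypotheses alone: with $Q_1={\rm diag}(-1,2)$, $Q_2={\rm diag}(1,-1)$, $b_1=b_2=0$, $c=-1/2$, one has $\lambda_1=1$, $\lambda_2=2$, $h_1(x)=\frac12x_2^2-\frac12$, $h_2(x)=\frac12x_1^2-1$, $v({\rm P_1})=v({\rm P})=-\frac12$, and $x^*=(0,0)$ is $(\rm P_1)$-optimal with $h_1(x^*)=t$ and $h_2(x^*)<t$, yet $f_1(x^*)=0>-\frac12$. So any correct treatment of Case 1 must isolate and justify $\lambda_1 f_2(x^*)=0$ rather than derive $f_1(x^*)\le t$ by algebra on the hypotheses.

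Your Case 2 is in better shape. The forward direction (every $\tilde x=x^*+\tilde\theta v_l$ is optimal for (P)) is essentially correct and takes a genuinely different route from the paper: you invoke the KKT conditions of $(\rm P_1)$ --- with $h_1$ inactive the multiplier on $h_1-t$ vanishes, and stationarity forces $\nabla h_2(x^*)=0$ --- whereas the paper argues directly that $a_2^\top d=0$ for every $d\in\nl(A_2)$ by exhibiting a feasible point of $(\rm P_1)$ with smaller $t$ otherwise. Both deliver that $h_2\equiv t$ on $x^*+\nl(A_2)$; your version is cleaner and gives more (an unconstrained minimizer of $h_2$), but you should explicitly note that Slater's condition for $(\rm P_1)$ licenses the KKT conditions. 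The converse inclusion --- that \emph{every} optimal solution of (P) has the form $x^*+\tilde\theta v_l$, which is needed for the ``forms the set of optimal solutions'' clause --- you only sketch and correctly identify as the hard part; the paper's proof does not establish it either, so you are not behind the paper there, but as written your proposal does not prove that clause.
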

\begin{proof}
Note that at least one of $h_1(x^*)\leq t$ and $h_2(x^*)\leq t$ takes equality. Then we prove the theorem for the following two cases:

\begin{enumerate}
\item
If $h_1(x^*)= t$ and $h_2(x^*)\leq t$,  then $f_1(x^*)+\lambda_2f_2(x^*)\leq f_1(x^*)+\lambda_1f_2(x^*)$. Hence $f_2(x^*)\leq0$ due to $\lambda_2-\lambda_1>0$.

\item Otherwise, $h_1(x^*)<t$ and $h_2(x^*)=t$. In this case, for all $d\in\nl(A_2)$ we have $d^\top A_1d>0$ due to Lemma \ref{SDinterval}. We also claim that $a_{2}^\top d=0$. Otherwise, setting $d$ such that $a_{2}^\top d<0$  (This can be done since we have $a_{2}^\top(-d)<0$ if $a_{2}^\top d>0$.) yields
$$h_2(x^{*}+d)=h_2(x^*)+\frac{1}{2}d^\top A_2d+(x^*)^\top A_2 d+a_{2}^\top d=h_2(x^*)+a_{2}^\top d<t,$$
where the second equality is due to $d\in\nl(A_2)$ and $h_{1}(x^{*}+d)<t$ for any sufficiently small $d$. This implies that $(x^{*},t)$ is not optimal, which is a contradiction.
Equation (\ref{theta}) has two solutions due to the positive parameter before the quadratic term, i.e., $v_l^\top A_1v_{l}>0$ and the negative constant, i.e., $h_1(x^*)-t<0$.
With the definition of $\tilde\theta$,
we know  $h_1(\tilde x)=t$ and $h_2(\tilde x)=t$. This further implies  $f_1(\tilde x)=t$ and $f_2(\tilde x)=0$, i.e., $\tilde x$ is an optimal solution to (P).
\end{enumerate}\end{proof}

\begin{rem}
In Item  2 of the above proof, $a_{2}^\top d=0$ indicates that problem (P) is in the hard case.\end{rem}


We next illustrate our recovery approach for the following simple     example,
$$\min\{3x_1^2-\frac{1}{2}x_2^2-x_2:-x_1^2+\frac{1}{2}x_2^2+x_2+1\leq0\}.$$
Note that, for this example, Condition \ref{con1} holds, $\lambda_1=1$ and $\lambda_2=3$. Then we have the following CQR,
$$\min\{t:2x_1^2+1\leq t,~x_2^2+2x_2+3\leq t\}.$$
An optimal solution of the CQR  is $x=(0,-1)^\top ,~t=2$. However, this $x$ is  not feasible to (P).
Using the approach in Theorem \ref{recover}, we obtain an optimal solution $\tilde x=(\frac{\sqrt2}{2},-1)^\top$   to problem (P). In fact, this instance is in the hard case since the optimal Lagrangian multiplier, $\lambda^*=3$, is at the end of the interval $\{\lambda:Q_1+\lambda Q_2\succeq0,~\lambda\geq0\}$ and $a-\lambda^*b\in\ra(Q_1+\lambda^* Q_2)$.

We finally point out that our method can be extended to the following variants of GTRS with equality constraint and interval constraint,
\begin{eqnarray*}
\begin{array}{llllll}
\rm(EP) &\min& f_1(x):=\frac{1}{2}x^\top Q_1x+b_{1}^\top x&\rm(IP) &\min& f_1(x):=\frac{1}{2}x^\top Q_1x+b_{1}^\top x\\
&{\rm s.t.}&f_2(x):=\frac{1}{2}x^\top Q_2x+b_{2}^\top x+c=0,&&{\rm s.t}.&c_1\leq f_2(x):=\frac{1}{2}x^\top Q_2x+b_{2}^\top x\leq c_2.
\end{array}
\end{eqnarray*}   It is shown in \cite{pong2014generalized,jiang2017socp} that \rm(IP) can be reduced to  \rm(EP) with minor computation. It is obvious that all our previous results for inequality constrained GTRS hold for (EP) if we   remove the non-negativity requirement for $\lambda$ in $I_{PSD}$, i.e., $I_{PSD}=\{\lambda:Q_1+\lambda Q_2\succeq0\}$.
We thus omit detailed discussion for (EP)  to save space.

In the last part of this section, we compare the   CQR in this paper with CQR for general QCQP in \cite{fujie1997semidefinite}.  The authors in \cite{fujie1997semidefinite} considered the following general QCQP,
$${\rm(QP)}~\min \tilde b_0^\top x ~~~{\rm s.t.} ~\frac{1}{2}x^\top \tilde Q _ix+\tilde b_{i}x+\tilde c_i\leq0,~i=1,\ldots,m,~x\in X,$$
where $X$ is a polyhedron.
They further showed that the SDP\ relaxation of (QP) is equivalent to  the following CQR for {\rm(QP)}:
$${\rm (CQP)}~~\min \tilde b_0^\top x ~~~{\rm s.t.} ~x\in G,$$
where $G=\{x:F_{s}(x)\leq0$ for every $s\in T$\},  $F_{s}(x)=\sum_{i=1}^ms_i(\frac{1}{2}x^\top \tilde Q_ix+\tilde b_{i}x+\tilde c_i)$ and
$$T:=\{s\in\mathbb{R}^m:~s\geq0,\tau\in \mathbb{R},  \begin{pmatrix}\sum_{i=1}^ms_{i}\tilde c_{i}&\frac{1}{2}(\sum_{i=1}^ms_{i}\tilde b_i)  \\\frac{1}{2}(\sum_{i=1}^ms_i\tilde b_i^\top)&\sum_{i=1}^m\frac{s_i}{2}\tilde Q_i\end{pmatrix}\succeq0\}.$$ For the  quadratic problem $\rm(P_1)$, because the variable $t$ is linear in the objective and the constraints, we can reduce $T$  to
$$T:=\{s:s_{1}+s_2=1,s\geq0,\sum_{i=1}^2\frac{s_i}{2}Q_i\succeq0,\sum_{i=1}^2s_{i}b_i\in\ra(\sum_{i=1}^2s_iQ_i)\},$$
where the restriction $s_1+s_2=1$ does not affect the feasible region $G$ since $F_s(x)\leq0$ is equivalent to  $k F_s(x)\leq0$ with any positive scaling $k$ for $s$.  Note that   $h_1(x)=F_{s^{1}}(x)$ and $h_2(x)=F_{s^{2}}(x)$ with $s^{1}=(1,0)^\top$ and $s^{2}=(0,1)^\top$. For any $s\in T$,   $h_1(x)\leq0$ and $h_2(x)\leq0$ imply $F_{s}(x)\leq0$ because
$F_{s}(x)$ is a convex combination of $f_{1}(x)+\lambda_1f_2(x)$ and $f_{1}(x)+\lambda_2f_2(x)$. Hence, by the strong duality and with analogous proof to that in Theorem \ref{mainthm}, the two  feasible regions  of problems $\rm(P_1)$ and $\rm(CQP)$ are equivalent and we further have $v{\rm(P_1)}= v{\rm(CQP)}$.

\section{Efficient algorithms in solving the minimax problem reformulation of the CQR }

In this section, we propose efficient algorithms to solve the GTRS under Condition \ref{con1}. As shown in Theorem \ref{mainthm} and Corollary \ref{cqr2},  the GTRS is equivalent to $\rm(P_1)$ or either   ${\rm(\overline P_1)}$ or ${\rm(\underline P_1)}$. The three problems have similar forms and can be solved by the following proposed method in this section. Hence, to save space, we only consider  solution algorithms for $\rm(P_1)$ in this section.

The convex quadratic problem $\rm(P_1)$  can be cast as an SOCP problem and solved by many existing solvers, e.g.,  CVX \cite{cvx}, CPLEX \cite{cplex} and MOSEK \cite{mosek2017mosek}. However, the SOCP reformulation is not very efficient when the dimension is large (e.g., the SOCP solver will take about 1,000 seconds to solve a problem of dimension 10,000).
Fortunately, due to its simple  structure, $\rm (P_1)$ is equivalent to the following  minimax problem of two convex quadratic functions$${\rm(M)}~~~\min \{H(x):=\max \{h_1(x),h_2(x)\}\}.$$
Hence we aim to derive an efficient method to solve the above minimax problem, thus solving the original GTRS.  Our method is a steepest descent method to find a critical point with $0\in\partial H(x)$. It is well known that such a critical point is an optimal solution of problem (M).

The following theorem tells us how to find the steepest descent direction.
\begin{thm}\label{descentthm}Let $g_1=\nabla h_1(x)$ and $g_2=\nabla h_2(x)$. If $g_1$ and $g_2$ have opposite directions, i.e., $g_1=-tg_2$ for some constant $t>0$ or if $g_i=0$ and $h_i(x)\geq h_j(x)$ for $i\neq j,$ $i,j\in\{1,2\}$, then $x$ is a global optimal solution. Otherwise we can always find the steepest descent direction $d$ in the following way:
\begin{enumerate}
\item when  $h_1(x)\neq h_2(x)$,  $d=-g_1$ if $h_1(x)>h_2(x)$ and otherwise $d=-g_2$;
\item when $h_1(x)=h_2(x)$, $d=-(\alpha g_1+(1-\alpha)g_2)$, where $\alpha$ is defined in the following three cases:
\begin{enumerate}
\item $\alpha=0$, if  $g_1^\top g_1\geq g_1^\top g_2\geq g_2^\top g_2$,
\item $\alpha=1$, if $g_1^\top g_1\leq g_1^\top g_2\leq g_2^\top g_2$,\item $\alpha=\frac{g_2^\top g_2-g_1^\top g_2}{g_1^\top g_1+g_2^\top g_2-2g_1^\top g_2}$, if $g_1^\top g_2\leq g_2^\top g_2$ and  $g_1^\top g_2\leq g_1^\top g_1$.
\end{enumerate}
\end{enumerate}
\end{thm}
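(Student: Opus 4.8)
The plan is to reduce everything to the subdifferential of the convex function $H$ together with the classical fact that the steepest descent direction of a convex function is the negative of the minimum-norm element of its subdifferential. The starting point is the standard subdifferential formula for a pointwise maximum of two smooth convex functions: writing $g_1=\nabla h_1(x)$ and $g_2=\nabla h_2(x)$, one has $\partial H(x)=\{g_1\}$ when $h_1(x)>h_2(x)$, $\partial H(x)=\{g_2\}$ when $h_1(x)<h_2(x)$, and $\partial H(x)=\mathrm{conv}\{g_1,g_2\}=\{\alpha g_1+(1-\alpha)g_2:\alpha\in[0,1]\}$ when $h_1(x)=h_2(x)$.

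For the optimality assertion I would use that, $H$ being convex, $x$ is a global minimizer of $H$ (equivalently, since $\rm(P_1)$ and $\rm(M)$ are equivalent, an optimal solution of $\rm(P)$) if and only if $0\in\partial H(x)$. In the strict case $h_i(x)>h_j(x)$ this is exactly $g_i=0$; in the tie case $h_1(x)=h_2(x)$ it is the solvability of $\alpha g_1+(1-\alpha)g_2=0$ for some $\alpha\in[0,1]$, i.e. $g_1=0$ (take $\alpha=1$), or $g_2=0$ (take $\alpha=0$), or $g_1=-tg_2$ with $t>0$ (take $\alpha=1/(1+t)$). This matches the stated sufficient conditions.

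For the descent direction, let $s^{*}$ be the projection of the origin onto the compact convex set $\partial H(x)$, i.e. the minimum-norm subgradient, and set $d=-s^{*}$. The variational inequality characterizing this projection, $\langle s-s^{*},-s^{*}\rangle\le 0$ for all $s\in\partial H(x)$, gives $\langle s,s^{*}\rangle\ge\|s^{*}\|^{2}$, whence $H'(x;d)=\max_{s\in\partial H(x)}\langle s,d\rangle=-\min_{s\in\partial H(x)}\langle s,s^{*}\rangle=-\|s^{*}\|^{2}<0$ whenever $s^{*}\neq 0$; a Sion minimax argument over $\{d:\|d\|\le 1\}$ and the compact convex $\partial H(x)$ shows that, after normalization, $d$ is in fact steepest among all unit directions. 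When $h_1(x)\neq h_2(x)$ the set $\partial H(x)$ is the singleton $\{g_i\}$ with $i$ the index of the larger of $h_1,h_2$, so $s^{*}=g_i$ and $d=-g_i$, which is Item 1. When $h_1(x)=h_2(x)$ I would compute $s^{*}$ by minimizing the convex quadratic $\phi(\alpha)=\|\alpha g_1+(1-\alpha)g_2\|^{2}=\alpha^{2}g_1^{\top}g_1+2\alpha(1-\alpha)g_1^{\top}g_2+(1-\alpha)^{2}g_2^{\top}g_2$ over $\alpha\in[0,1]$; its unconstrained critical point is $\alpha^{\circ}=(g_2^{\top}g_2-g_1^{\top}g_2)/(g_1^{\top}g_1+g_2^{\top}g_2-2g_1^{\top}g_2)$, and projecting $\alpha^{\circ}$ onto $[0,1]$ yields exactly the three subcases of Item 2, the interior case being $\alpha^{\circ}\in[0,1]$ with the displayed formula and the boundary cases $\alpha^{\circ}\le 0$, $\alpha^{\circ}\ge 1$ giving $\alpha=0$, $\alpha=1$.

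The remaining work is the arithmetic matching these three regimes to the inequalities in Item 2, and this is the only place demanding care. Two points: (i) the degenerate case $g_1=g_2$, where the denominator $g_1^{\top}g_1+g_2^{\top}g_2-2g_1^{\top}g_2=\|g_1-g_2\|^{2}$ vanishes — here $\partial H(x)$ is again a singleton, conditions (a) and (b) both hold, and both return the correct $d=-g_1$; and (ii) showing that the three inequality regimes are exhaustive and agree where they overlap. For (ii) I would invoke the identities $g_1^{\top}(g_1-g_2)=g_2^{\top}(g_1-g_2)+\|g_1-g_2\|^{2}$ and $g_2^{\top}(g_1-g_2)=g_1^{\top}(g_1-g_2)-\|g_1-g_2\|^{2}$, which show that $g_1^{\top}g_2\ge g_2^{\top}g_2$ already forces $g_1^{\top}g_1\ge g_1^{\top}g_2$ (so the extra inequality written in (a) is automatic), symmetrically for (b), and that every sign pattern of $g_1^{\top}g_2-g_1^{\top}g_1$ and $g_1^{\top}g_2-g_2^{\top}g_2$ falls into (a), (b), or (c), with the formula in (c) collapsing to $0$ or $1$ on the boundaries. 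I do not expect any genuine difficulty beyond this bookkeeping.
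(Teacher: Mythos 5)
Your proposal is correct and follows essentially the same route as the paper: both reduce the optimality claim to $0\in\partial H(x)$, handle the smooth case via the singleton subdifferential, and in the tie case compute the steepest descent direction as the negative of the minimum-norm element of $\mathrm{conv}\{g_1,g_2\}$ by minimizing $\|\alpha g_1+(1-\alpha)g_2\|^2$ over $\alpha\in[0,1]$ and projecting the unconstrained minimizer onto $[0,1]$. The only differences are that you spell out the minimax/variational-inequality justification that the paper delegates to a citation, and you explicitly treat the degenerate case $g_1=g_2$ and the exhaustiveness of the three regimes, which the paper leaves implicit.
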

\begin{proof}
If
 $h_1(x)=h_2(x)$ and $g_1=-tg_{2}$, then  $0\in \partial H(x)$. Hence, by the definition of subgradient, we have
 $$H(y)\geq H(x)+0^\top (y-x)=H(x),~ \forall y,$$which further implies that $x$ is the optimal solution.

If $g_i=0$ and $h_i(x)\geq h_j(x)$ for $i\neq j,$ $i,j\in\{1,2\}$, then for all $y\neq x,$ we have $H(y)\geq h_i(y)\geq h_i(x)=H(x)$, i.e.,  $x$ is a global optimal solution.

Otherwise we have the following three cases:
\begin{enumerate}
\item When  $h_1(x)\neq h_2(x)$, (suppose, w.l.o.g., $h_1(x)>h_2(x)$),  for all $y\in\mathcal{B}(x,\delta)$ with $\mathcal{B}(x,\delta)\subset\ \{x:h_2(x)<h_1(x)$\}), $H(x)=h_1(x)$ and thus $H(x)$ is differentiable at $x$ and smooth in its
neighbourhood. Hence, $d=-g_1$ if $h_1(x)>h_2(x)$. Symmetrically, the case with $h_2(x)>h_1(x)$ can be proved in the same way.
\item When $h_1(x)=h_2(x)$, the steepest descent direction can be found by solving the following problem:
$$\min_{\norm{y}=1}\max_{g\in\partial H(x)}g^Ty.$$
The above  problem is equivalent to $\min_{g\in\partial H(x)}\norm{g}^2$ \cite{boyd2004convex}, which is
exactly the following problem in minimizing a quadratic function of $\alpha$,
\begin{equation}\label{std}
\min_{0\leq \alpha\leq1} (\alpha g_1+(1-\alpha)g_2)^\top(\alpha g_1+(1-\alpha)g_2). \end{equation}
The first order derivative  of the above objective function is $\frac{g_2^\top g_2-g_1^\top g_2}{g_1^\top g_1+g_2^\top g_2-2g_1^\top g_2}$. Then if the derivative is in the interval $[0,1]$, the optimal $\alpha$ is given by $\frac{g_2^\top g_2-g_1^\top g_2}{g_1^\top g_1+g_2^\top g_2-2g_1^\top g_2}$. Otherwise,
(\ref{std}) takes its optimal solution on its boundary. In particular, \begin{itemize}
\item when
  $\frac{g_2^\top g_2-g_1^\top g_2}{g_1^\top g_1+g_2^\top g_2-2g_1^\top g_2}>1$, i.e., $g_1^\top g_1<g_1^\top g_2$ and $g_2^\top g_2>g_1^\top g_2$,  we have $\alpha=1$,
\item when
 $\frac{g_2^\top g_2-g_1^\top g_2}{g_1^\top g_1+g_2^\top g_2-2g_1^\top g_2}<0$, i.e., $g_1^\top g_2>g_2^\top g_2$,   we have $\alpha=0.$
\end{itemize}
\end{enumerate}
\end{proof}
\begin{rem}\label{smallestnorm}
The above theorem shows that the descent direction at each point with $h_1(x)=h_2(x)$ is either the one with the smaller norm between $\nabla h_1(x)$ and $\nabla h_2(x)$
 or the negative convex combination $d$ of  $\nabla h_1(x)$ and $\nabla h_2(x)$
 such that $\nabla h_1(x)^\top d=\nabla h_1(x)^\top d $. \end{rem}
 \begin{figure}[ht]
\centering
\includegraphics[width=\textwidth]{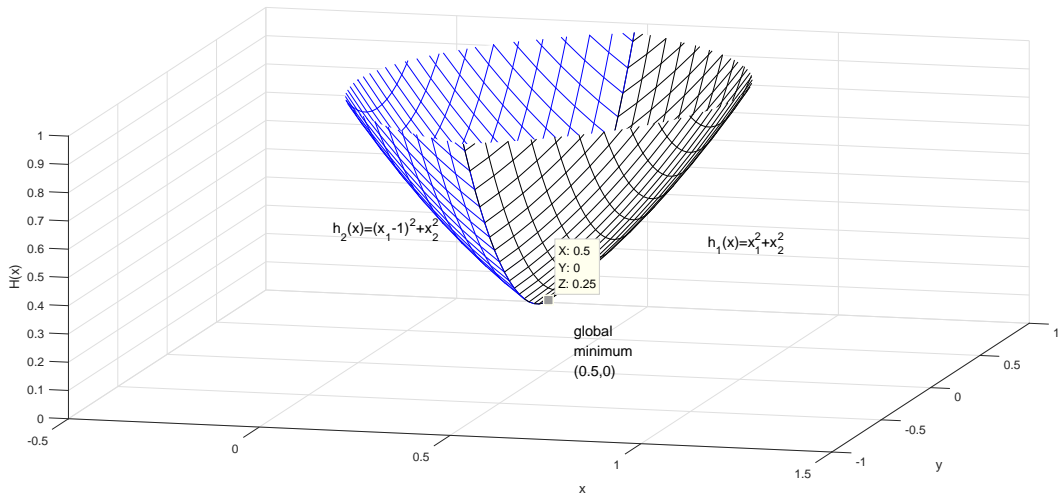}
\caption{Involving subgradient can avoid termination at nonsmooth and non-optimal points.}\label{fig1}
\end{figure}
We next present an example in Figure \ref{fig1} to illustrate the necessity of involving the subgradient (in some cases, both gradients are not descent directions). Consider $h_1(x)=x_1^2+x_2^2$ and $h_2(x)=(x_1-1)^2+x_2^2$. The optimal solution of this problem is $(0.5,0)^\top$. The gradient method can only converge to some point in the intersection curve of $h_1(x)=h_2(x)$, i.e., $x_{1}=0.5$, but not the global optimal solution. For example, when we are at $\bar x=(0.5,0.1)^{\top}$, the gradients for $h_1(\bar x)$ and $h_2(\bar x)$ are $g_1=(1,0.2)^\top$ and $g_2=(-1,0.2)^\top$, respectively. Neither $-g_1$ nor $-g_2$ is a descent direction at $H(\bar x)$; $H(\bar x+\epsilon g_i)>H(\bar x)$ for any small $\epsilon>0$, $i=1,2$, due to $g_{1}^\top g_2=-0.96<0$ and $h_1(\bar x)=h_2(\bar x)$. (The direction $-g_1$ is a descent direction, at $\bar x$, for $h_1(x)$  but ascent for $h_2(x)$ and thus ascent for $H(x)$; the same analysis holds for $-g_2$.)
The way we use to conquer this difficulty is to choose the steepest descent direction in the subgradient set at  points in the intersection curve. If we use the subgradient direction $d=-\frac{1}{2}(g_1+g_2)=-(0,0.2)^\top$, then $d$ is a descent direction since
$h_{1}(\bar x+\epsilon d)=H(\bar x)+2\epsilon g_{1}^\top d+\epsilon^2 d^\top d<H(\bar x)$ and $h_{2}(\bar x+\epsilon d)=H(\bar x)+2\epsilon g_{1}^\top d+\epsilon^2 d^\top d<H(\bar x)$  for any $\epsilon$ with $0<\epsilon<2$.

Using the descent direction presented in Theorem \ref{descentthm}, we propose two  algorithms to solve the minimax problem (M), respectively, in Algorithms \ref{alg1} and \ref{alg2}:
 we first compute a descent direction by Theorem \ref{descentthm}, apply then two different line search rules for choosing the step size, and finally terminate the algorithm if some termination criterion is met.   The advantage of our algorithms is that each iteration is  very cheap, thus yielding, with an acceptable iterate number, a low cost in CPU time. The most expensive operation  in each iteration is to compute several matrix vector products, which could become cheap  when the matrices are sparse.

\subsection{Line search with a special step size}
\begin{algorithm}[!ht]
\caption{Line search with a special step size for Problem (M)}
\label{alg1}
\begin{algorithmic}[1]
\Require Parameters in the minimax problem (M) \State Initialize  $x_0$\For {$k=0,1,\ldots, $}
\If {$h_1(x_k)>h_2(x_k)$} \State set $d_{k}=-\nabla h_{1}(x_k)$
\ElsIf {$h_1(x_k)<h_2(x_k)$}
\State set $d_{k}=-\nabla h_{2}(x_k)$
\Else \State set $d_{k}$ corresponding to Theorem \ref{descentthm}, item 2
\EndIf
\If {termination criterion is met}
\Return
\EndIf
\State Choose a step size $\beta_k$ according to Theorem \ref{lincon}
\State Update $x_{k+1}=x_k+\beta_kd_{k}$
\EndFor
\end{algorithmic}
\end{algorithm}

In the following, we first derive a local linear convergence rate for Algorithm \ref{alg1} and then demonstrate a  global sublinear convergence rate for Algorithm \ref{alg1}. We analyze the local convergence rate by studying the growth in the neighbourhood of any optimal solution to $H(x)$ in (M). In fact, $H(x)$ belongs to a more general class of piecewise quadratic functions. Error bound and KL property, which are two widely used techniques for convergence analysis, have been studied in the literature,  for several kinds of piecewise quadratic functions, see \cite{li1995error, luo2000error,zhou2017unified}. However, these results are based on piecewise quadratic functions separated by polyhedral sets, which is not the case of $H(x)$.
Li et al. \cite{li2015new} demonstrated  that KL property holds for the maximum of finite polynomials, but their KL exponent depends on the problem dimension and is close to one, which  leads to a very weak sublinear convergence rate.
Gao et al. \cite{gao2016ojasiewicz} studied the KL exponent for the TRS with the constraint replaced by an equality constraint $x^\top x=1$.
However, their technique depends on the convexity of the function $x^\top x$ and cannot be applied to analyze our problem. Up to now,  the KL property or error     bound for $H(x)$ has not been yet investigated in the literature related to the linear convergence of optimization algorithms. A significant result of this paper is to estimate the KL exponent of $1/2$ for function $H(x)$ when $\min_{x} H(x)> \max_{i}\{\min_{x} h_1(x),\min_{x} h_2(x)\}$. With this KL  exponent, we are able to illustrate the linear convergence of our first algorithm with the proposed special step size.

For completeness, we give a definition of KL property in the following. By letting $\mathcal{B}(x,\delta)=\{y:\norm{y-x}\leq \delta\}$  where $\norm{\cdot}$ denotes the Euclidean norm of a vector, we have the following definition of KL inequality.
\begin{dfn}\label{dfnkl}\cite{attouch2009convergence,gao2016ojasiewicz}
Let $f:\mathbb R^n\rightarrow\mathbb R\cup\{+\infty\}$ be a proper lower semicontinuous function satisfying that the restriction of $f$
to its domain is a continuous function.  The function $f$ is said to have the KL property if for any  $\forall x^*\in\{x:0\in\partial f(x)\}$,
there exist $C,\epsilon>0$ and $\theta\in[0,1)$ such that \begin{equation*}\label{KLiq}
 C\norm{y}\geq|f(x)-f^*(x)|^\theta,~~~\forall x\in B(x^*,\epsilon),~\forall y\in\partial f(x),
\end{equation*}
where $\theta$ is known as the KL exponent.

\end{dfn}

Under Condition \ref{con1}, we know that there exists $\lambda_0\geq0$ such that $Q_1+\lambda_0Q_2\succ0$ and thus  $b_1+\lambda_0b_2\in \ra( Q_1+\lambda_0Q_2)$ due to the non-singularity of $Q_1+\lambda_0Q_2$. Hence from Lemma \ref{nscon}, problem (P) (and thus problem $\rm(P_1)$) is bounded from below. It is shown in \cite{jiang2017socp} that when the two matrices are SD and problem (P) is bounded from below,  the optimal solution of problem (P) is attainable. This further implies that problem $\rm(P_1)$ is  bounded from below with its optimal solution attainable. Assuming that $x^*$ is an optimal solution, the following theorem shows that the KL inequality holds with an exponent of 1/2  at $x^*$ under some mild conditions.
\begin{thm}\label{KL}
Assume that $\min h_1(x)<\min H(x)$ and  $\min h_2(x)<\min H(x)$. Then the KL property in Definition \ref{dfnkl} holds with exponent $\theta=1/2$. \end{thm}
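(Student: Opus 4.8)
The plan is to deduce a global \emph{quadratic growth} bound for $H$ at an optimal solution and then invoke the classical fact that convexity plus quadratic growth yields a KL exponent of $1/2$. \textbf{Step 1 (active set at a critical point).} Fix any critical point $x^*$, i.e. $0\in\partial H(x^*)$; since $H$ is convex and (as recalled before the theorem) attains its minimum, $x^*$ is a global minimizer and $H(x^*)=H^*:=\min_x H(x)$. If $h_1(x^*)\neq h_2(x^*)$, then near $x^*$ the function $H$ equals the smooth quadratic $h_i$ having the larger value, so $\partial H(x^*)=\{\nabla h_i(x^*)\}$ and $0\in\partial H(x^*)$ forces $\nabla h_i(x^*)=0$; as $h_i$ is convex this makes $x^*$ a global minimizer of $h_i$, so $\min h_i(x)=h_i(x^*)=H^*$, contradicting the hypothesis $\min h_i(x)<\min H(x)$. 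Hence $h_1(x^*)=h_2(x^*)=H^*$ and $\partial H(x^*)=\mathrm{conv}\{\nabla h_1(x^*),\nabla h_2(x^*)\}$, so there is $\alpha^*\in[0,1]$ with $\alpha^*\nabla h_1(x^*)+(1-\alpha^*)\nabla h_2(x^*)=0$. The cases $\alpha^*\in\{0,1\}$ are ruled out the same way (they would give $\nabla h_2(x^*)=0$ or $\nabla h_1(x^*)=0$, hence $\min h_2(x)=H^*$ or $\min h_1(x)=H^*$), so $\alpha^*\in(0,1)$.

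\textbf{Step 2 (a strongly convex quadratic minorant touching $H$ at $x^*$).} Put $\psi(x):=\alpha^*h_1(x)+(1-\alpha^*)h_2(x)=f_1(x)+\bar\lambda f_2(x)$, where $\bar\lambda:=\alpha^*\lambda_1+(1-\alpha^*)\lambda_2$. Since $\alpha^*\in(0,1)$ and $\lambda_1<\lambda_2$ by Condition \ref{con1}, $\bar\lambda$ is an interior point of $I_{PSD}$, and therefore the Hessian $A_\psi=Q_1+\bar\lambda Q_2$ of $\psi$ is positive definite: $A_\psi v=0$ forces $v^\top(Q_1+\lambda_1Q_2)v=v^\top(Q_1+\lambda_2Q_2)v=0$, hence $(Q_1+\lambda_1Q_2)v=(Q_1+\lambda_2Q_2)v=0$, hence $Q_1v=Q_2v=0$, hence $v=0$ by Assumption \ref{comnull} (the same positive-definiteness already appears in Lemma \ref{SDinterval}). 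By construction $\nabla\psi(x^*)=\alpha^*\nabla h_1(x^*)+(1-\alpha^*)\nabla h_2(x^*)=0$, so the strictly convex quadratic $\psi$ is globally minimized at $x^*$, with $\psi(x^*)=H^*$; moreover $\psi\le\max\{h_1,h_2\}=H$ pointwise. Writing $\mu:=\lambda_{\min}(A_\psi)>0$, this yields, for every $x$,
$$H(x)-H^*\;\ge\;\psi(x)-\psi(x^*)\;=\;\tfrac12(x-x^*)^\top A_\psi(x-x^*)\;\ge\;\tfrac{\mu}{2}\,\norm{x-x^*}^2 ,$$
a \emph{global} quadratic growth inequality; in particular $x^*$ is the unique minimizer of $H$.

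\textbf{Step 3 (quadratic growth $+$ convexity $\Rightarrow$ KL with $\theta=1/2$).} Fix $x$ and $y\in\partial H(x)$. If $H(x)=H^*$, then $x=x^*$ by Step 2, and $C\norm{y}\ge|H(x)-H^*|^{1/2}=0$ is trivial. If $H(x)>H^*$, convexity of $H$ gives $H^*=H(x^*)\ge H(x)+y^\top(x^*-x)$, hence $\norm{y}\,\norm{x-x^*}\ge y^\top(x-x^*)\ge H(x)-H^*>0$; combining with $\norm{x-x^*}\le\sqrt{2(H(x)-H^*)/\mu}$ from Step 2 yields $\norm{y}\ge\sqrt{\mu/2}\,(H(x)-H^*)^{1/2}$. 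Therefore the KL inequality of Definition \ref{dfnkl} holds at $x^*$ with exponent $\theta=1/2$, constant $C=\sqrt{2/\mu}$, and any radius $\epsilon>0$, which proves the theorem.

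The argument is short once the minorant $\psi$ is produced, and the delicate point is purely where the hypotheses enter. The assumptions $\min h_1(x)<\min H(x)$ and $\min h_2(x)<\min H(x)$ are precisely what force $\alpha^*$ — equivalently $\bar\lambda$ — to lie strictly between $\lambda_1$ and $\lambda_2$, and Condition \ref{con1} together with Assumption \ref{comnull} then upgrade $A_\psi$ from merely positive semidefinite to positive \emph{definite}. If $\bar\lambda$ were allowed to coincide with an endpoint $\lambda_1$ or $\lambda_2$, the minorant $\psi$ would lose strong convexity, quadratic growth could fail along $\nl(A_\psi)$, and the KL exponent would degrade — so the crux is to recognize that the stated hypotheses exclude exactly this situation.
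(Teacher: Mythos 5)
Your proof is correct, but it takes a genuinely different and noticeably cleaner route than the paper. The paper works locally: it fixes a small ball around $x^*$, splits into the cases $h_1(x)\neq h_2(x)$ and $h_1(x)=h_2(x)$ at the test point $x$, further subdivides the latter according to where the minimizing $\alpha$ lies, and assembles a KL constant from several neighborhood-dependent quantities ($\epsilon_0,\dots,\epsilon_4$, $M_1,M_2$, etc.). You instead extract the whole statement from a single structural observation: the hypotheses $\min h_i<\min H$ force the optimal multiplier $\alpha^*$ at $x^*$ to be strictly interior, so $\psi=\alpha^*h_1+(1-\alpha^*)h_2$ has Hessian $Q_1+\bar\lambda Q_2$ with $\bar\lambda\in(\lambda_1,\lambda_2)$, which is positive definite by exactly the argument the paper itself uses in Lemma \ref{SDinterval} (Condition \ref{con1} plus Assumption \ref{comnull}); since $\psi\le H$ with equality and vanishing gradient at $x^*$, you get \emph{global} quadratic growth, and the standard ``convexity $+$ quadratic growth $\Rightarrow$ KL with $\theta=1/2$'' computation finishes the job. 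What your approach buys is considerable: the KL inequality holds for every $x$ (not just in a small ball), the constant is the explicit and interpretable $C=\sqrt{2/\mu}$ with $\mu=\lambda_{\min}(Q_1+\bar\lambda Q_2)$ (which matches the heuristic $\sqrt{\lambda_{\max\alpha}/2\lambda_{\min nnz\alpha}^2}$ discussed in the paper's numerical section and plugs directly into the rate in Theorem \ref{lincon}), uniqueness of the minimizer comes for free, and the role of the hypotheses is transparent — they exclude $\bar\lambda$ hitting an endpoint of $I_{PSD}$, which is precisely the hard-case degeneracy illustrated by the paper's counterexample with exponent $3/4$. The paper's case analysis, by contrast, does not need the minorant to be globally dominated and so is closer in spirit to arguments that might survive when only local information is available, but for this particular $H$ your argument is strictly simpler and proves a stronger (global) conclusion.
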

\begin{proof}
Note that $\min h_1(x)<\min H(x)$ and  $\min h_2(x)<\min H(x)$ imply that, for any $x^*\in\{x:\partial H(x)=0\}$, $\nabla h_1(x^*)\neq0$ and $\nabla h_2(x^*)\neq0$, respectively.
Assume $L= \max \{\lambda_{\max}(A_1),\lambda_{ \max}(A_2)\}.$ We carry out our proof by considering the following two cases.
\begin{enumerate}
\item For any point with $h_1(x)\neq h_2(x)$, w.l.o.g., assuming $h_1(x)>h_2(x)$ gives rise to  $\partial H(x)=\{\nabla h_1(x)\}$. Hence
\begin{eqnarray*}
|H(x)-H(x^*)|&=&\frac{1}{2}(x-x^*)^\top A_1(x-x^*)+(x^*)^\top A_1(x-x^*)+a_1^\top(x-x^*)\\
&\leq& \frac{1}{2}L\norm{x-x^*}^2+\norm{\nabla h_{1}(x^*)}\norm{x-x^*}.
\end{eqnarray*}

On the other hand, $\nabla h_1(x)=A_1x+a_1$ and
\begin{eqnarray*}
\norm{\nabla h_1(x)}^2&=&\norm{\nabla h_1(x)-\nabla h_1(x^*)+\nabla h_1(x^*)}^2\\
&=&(x-x^*)^\top A_1A_1(x-x^*)+\norm{\nabla h_1(x^*)}^2+2(\nabla h_1(x^*))^\top A_1(x-x^*) \\
&\geq&\norm{\nabla h_1(x^*)}^2-2L\norm{\nabla h_1(x^*)}\norm{x-x^*}.
\end{eqnarray*}
Define  $\epsilon_{0}=\min\{1,\frac{\norm{\nabla h_1(x^*)}}{4L}\}$. As $\nabla h_1(x^*)\neq0$, for all $x\in \mathcal{B}(x^*,\epsilon_{0})$,  we then have
$$|H(x)-H(x^*)|\leq \frac{1}{2}L\epsilon_{0}^2+ \norm{\nabla h_{1}(x^*)}\epsilon_{0}\leq \frac{9}{32L}\norm{\nabla h_{1}(x^*)}^2$$
and
\begin{eqnarray*}
\norm{\nabla h_1(x)}^2\geq\norm{\nabla h_1(x^*)}^2-2L\norm{\nabla h_1(x^*)}\epsilon_{0}\geq\frac{1}{2}\norm{\nabla h_1(x^*)}^2.
\end{eqnarray*}
Hence ${|H(x)-H(x^*)|}^{\frac{1}{2}}\leq \sqrt{\frac{9}{32L}}\norm{\nabla h_{1}(x^*)}\leq \frac{3}{4\sqrt{L}}\norm{\nabla h_1(x)}$. So we have the following
 inequality,
$$|H(x)-H(x^*)|^\theta\leq C_{0}\norm{y},$$ for all $y\in\partial H(x)$ (here $\{\nabla h_1(x)\}=\partial H(x)$) with $\theta=\frac{1}{2}$, $C_{0}=\frac{3}{4\sqrt{L}}$.

\item Consider next a point $x$ with $h_1(x)=h_2(x)$. Define $h_\alpha(x)=\alpha h_1(x)+(1-\alpha)h_2(x),$ for some parameter $\alpha\in[0,1]$. Let $I=\{i\mid(\nabla h_1(x^*))_i\neq0\}$.  The optimality condition $0\in\partial H(x^*)$ implies that there exists some $\alpha_0\in[0,1]$ such that $\alpha_0\nabla h_1(x^*)+(1-\alpha_0)\nabla h_2(x^*)=0$.
 Note that  $\nabla h_1(x^*)\neq0$ and $\nabla h_2(x^*)\neq0$ as assumed and thus $\alpha_0\in(0,1)$. Define $j={\rm argmax}_i\{|(\nabla h_1(x^*))_i|,i\in I\},~M_{1}=\norm{(\nabla h_1(x^*))_j}$ and $M_2=\norm{(\nabla h_2(x^*))_j}.$
Note that $\partial H_{\alpha_0}(x^{*})=0$ implies that $\alpha_0 M_1=(1-\alpha_0) M_2$.  W.o.l.g, assume  $M_1\geq M_2$ and thus $\alpha_0\leq \frac{1}{2}$.
Since $A_1x$ ($A_2x$, respectively) is a continuous function of $x$, there exists an $\epsilon_1>0$ ($\epsilon_2>0$, respectively) such that for any $x\in \mathcal{B}(x^*,\epsilon_1)$ ($x\in\mathcal{B}(x^*,\epsilon_2)$, respectively), $\frac{3}{2}M_1\geq|(\nabla h_1(x))_j| >\frac{1}{2}M_1$ ($\frac{3}{2}M_2\geq|(\nabla h_2(x))_j| >\frac{1}{2}M_2$, respectively).
Let $\epsilon_{3}=\min \{\epsilon_1,\epsilon_2\}$. Then we have the following two subcases.
\begin{enumerate}
\item
For all $x\in \mathcal{B}(x^*,\epsilon_{3})$ and $\alpha\in[0,\frac{1}{4}\alpha_0]$, we  have
\begin{eqnarray*}
\norm{\nabla h_\alpha(x)}&\geq&-\alpha |(\nabla h_1(x))_j|+(1-\alpha) |(\nabla h_2(x))_j|\\
&\geq&-\frac{3}{2}\alpha M_1+\frac{1}{2}(1-\alpha)M_2\\
&\geq&-\frac{3}{8}\alpha_0 M_1+\frac{3}{8}(1-\alpha_0)M_2+(\frac{1}{8}+\frac{1}{8}\alpha_0)M_2\\
&=&(\frac{1}{8}+\frac{1}{8}\alpha_0)M_2.
\end{eqnarray*}
The third inequality is due to the fact that $-\frac{3}{2}\alpha M_1+\frac{1}{2}(1-\alpha)M$ is a decreasing function of $\alpha$ and the last equality is due to $\alpha_0 M_1=(1-\alpha_0) M_2$.
Symmetrically,  for $\alpha\in[1-\frac{1-\alpha_0}{4},1]$, we have $|(\nabla h_\alpha(x))|\geq(\frac{3}{8}-\frac{1}{4}\alpha_0)M_1$.
Combining these two cases and  $\alpha_0\leq \frac{1}{2}$
yields $\norm{\nabla h_\alpha(x)}\geq\frac{1}{8}M_2.$

On the other hand
\begin{eqnarray*}
|H(x)-H(x^*)|&=&\frac{1}{2}(x-x^*)^\top A_1(x-x^*)+(x^*)^\top A_1(x-x^*)+a_1^\top(x-x^*)\\
&\leq&\frac{1}{2}L\norm{x-x^*}^2+\norm{\nabla h_{1}(x^*)}\norm{x-x^*}\\
&\leq&\left(\frac{1}{2}L\epsilon_{3}^2+\norm{\nabla h_{1}(x^*)}\right)\norm{x-x^*}.
\end{eqnarray*}
Letting $\epsilon_{4}=\min \{\epsilon_{3},1\}$ leads to $\frac{M_2^2}{32L\epsilon_{3}^{2}+64\norm{\nabla h_{1}(x^*)}}|H(x)-H(x^*)|\leq \norm{\nabla h_\alpha(x)}^{2}$. So $$|H(x)-H(x^*)|^\theta\leq C_{1}\norm{\nabla h_\alpha(x)},~\forall \alpha\in[0,\frac{1}{4}\alpha_0]\cup[1-\frac{1-\alpha_0}{4},1],~\forall x\in \mathcal{B}(x^*,\epsilon_{4})$$where $\theta=\frac{1}{2}$ and $C_{1}=\frac{\sqrt {32L\epsilon_{3}^{2}+64\norm{\nabla h_{1}(x^*)}}}{M_2}$.

\item Next let us consider the case with $\alpha\in[\frac{\alpha_0}{4},1-\frac{1-\alpha_0}{4}]$.
In this case,
 defining $A_\alpha=\alpha A_1+(1-\alpha)A_2$
and $a_\alpha=\alpha a_1+(1-\alpha)a_2$ gives rise to
\begin{eqnarray*}
\norm{\nabla h_\alpha(x)}^2&=&\norm{\nabla h_\alpha(x)-\nabla h_\alpha(x^*)+\nabla h_\alpha(x^*)}^2\\
&=&(x-x^*)^\top A_\alpha A_\alpha(x-x^*)+\norm{\nabla h_\alpha(x^*)}^2+2(\nabla h_\alpha(x^*))^\top A_\alpha(x-x^*)
\end{eqnarray*}
and since $h_1(x)=h_2(x)$ and $h_1(x^*)=h_2(x^*)$,
\begin{eqnarray*}
|H(x)-H(x^*)|&=&\frac{1}{2}(x-x^*)^\top A_\alpha(x-x^*)+(x^*)^\top A_\alpha(x-x^*)+a_\alpha^\top(x-x^*)\\
&=&\frac{1}{2}(x-x^*)^\top A_\alpha(x-x^*)+(\nabla h_\alpha(x^*))^\top(x-x^*).
\end{eqnarray*}
Define $\mu_{0}=\lambda_{\min}(A_\alpha)$. Then
\begin{eqnarray*}
&&\norm{\nabla h_\alpha(x)}^2-2\mu_{0}|H(x)-H(x^*)|\\
&=&(x-x^*)^\top A_\alpha (A_\alpha-\mu_{0} I)(x-x^*)+\norm{\nabla h_\alpha(x^*)}^2\\
&&+2(\nabla h_\alpha(x^*))^\top(A_\alpha-\mu_{0} I)(x-x^*) \\
&=&\norm{(A_\alpha-\mu_{0} I)(x-x^*)+\nabla h_\alpha(x^*)}^2+\mu_{0}(x-x^*)^\top(A_\alpha-\mu_{0} I)(x-x^*)\geq0.
\end{eqnarray*}

We next show that $\mu_{0}$ is  bounded from below. Define $\mu_1$ ($\mu_2$, respectively) as the smallest nonzero eigenvalue of $A_1$ ($A_2$, respectively).
Note that $\alpha A_1+(1-\alpha)A_2$ is positive definite for all $\alpha\in[\frac{\alpha_0}{4},1-\frac{1-\alpha_0}{4}]$ as assumed in Condition \ref{con1}. Then $A_1$ and $A_2$ are simultaneously diagonalizable as shown in Lemma \ref{SDinterval}. Together with the facts that $A_1\succeq0$ and $A_{2}\succeq0$, there exists a nonsingular matrix $P$ such that $P^\top A_1P=D_1\succeq \mu_1\dg(\delta )$ and $P^\top A_2P=D_2\succeq \mu_2\dg(\delta )$, where $\delta_i=1$ if $D_{ii}>0$ and $\delta_i=0$ otherwise.
Since $\alpha\in[\frac{\alpha_0}{4},1-\frac{1-\alpha_0}{4}]$, $\lambda_{\min}(A_\alpha)\geq \min\{\alpha\mu_{1}, ㄗ(1-\alpha)\mu_2\}\geq\min\{ \frac{\alpha_0}{4}\mu_{1},\frac{1-\alpha_0}{4}\mu_2\}>0$.
From  $\norm{\nabla h_\alpha}^2-2\mu_{0}|H(x)-H(x^*)|\geq0$, we know $\norm{\nabla h_\alpha}^2-\mu_{0}|H(x)-H(x^*)|\geq0$.

Let $\theta=\frac{1}{2}$, $C_{2}=\sqrt{1/(2\mu_{0})}$. We have
$$C_{2}\norm{\nabla h_\alpha(x)}\geq|H(x)-H(x^*)|^{\theta},
~\forall\alpha\in[\frac{\alpha_0}{4},1-\frac{1-\alpha_0}{4}],~x\in \mathcal{B}(x^*,\epsilon_{4}).$$
\end{enumerate}
Combining cases (a) and (b) gives rise to $$|H(x)-H(x^*)|^\theta\leq C_{3}\norm{\nabla h_\alpha(x)}$$ with $\theta=\frac{1}{2}$, $C_{3}=\max\{C_{1},C_{2}\}$, for all $x\in \mathcal{B}(x^*,\epsilon_{4})$.
\end{enumerate}
Combining cases 1 and 2 yields that the KL inequality holds  with $\theta=\frac{1}{2}$ and $C=\max\{C_{0},C_{3}\}$   for all $x\in \mathcal{B}(x^*,\epsilon)$ with $\epsilon=\min\{\epsilon_0,\epsilon_4\}$.
\end{proof}

Note that the assumption  $\min h_1(x)<\min H(x)$ and  $\min h_2(x)<\min H(x)$ means that we are in the easy case of GTRS as in this case $\lambda^*$ is an interior point of $I_{PSD}$ and  $Q_1+\lambda^* Q_2$ is nonsingular, where $\lambda^*$  is the optimal Lagrangian multiplier of the GTRS \cite{more1993generalizations}. However, there are two situations for the hard case. Let us consider the KL property for $H(x)$ at the optimal solution $x^*$. When $h_i(x^{*})>h_j(x^{*})$, for $i=1$ or 2 and $j=\{1,2\}/\{i\}$, in the neighbourhood $x^*$,   $H(x)$ is just $h_i(x)$, and the KL is also 1/2 \cite{attouch2009convergence}. In such a case, our algorithm performs asymptotically like the gradient descent method for unconstrained quadratic minimization.
However, when  $h_i(x^{*})=h_j(x^{*})$ (note that $\min h_j(x)<H(x^{*})$ can still hold in this situation),
 the KL exponent is not always $1/2$ for $H(x)$.  Consider the following counterexample with
$h_1(x)=x_1^2$ and $h_2(x)=(x_1+1)^2+x_2^2-1$. The optimal solution is $(0,0)$ and is attained by both $h_1$ and $h_2$.
Let $x_2=-\epsilon$, where $\epsilon$ is a small positive number. Consider the curve where $h_1(x)=h_2(x)$, which further implies $x_1=-\epsilon^2/2$.
Then we have
$$(1-\beta ) \nabla h_1+\beta \nabla h_2=2\begin{pmatrix}-(1-\beta) \frac{\epsilon^2}{2}+\beta(-\frac{\epsilon^2}{2}+1)\\
\beta\epsilon
\end{pmatrix}=2\begin{pmatrix}- \frac{\epsilon^2}{2}+\beta\\
\beta\epsilon
\end{pmatrix},$$
and thus
\begin{eqnarray*}
\min_{y\in\partial H(x)}\norm{y}^2&=&\min_\beta4\left(\beta^2\epsilon^2+\beta^{2}-\epsilon^2\beta+\frac{\epsilon^4}{4}\right)\\
&=&\min_\beta4\left((1+\epsilon^2)\left(\beta-\frac{\epsilon^{2}}{2(1+\epsilon^{2})}\right)^2-\frac{\epsilon^{4}}{4(1+\epsilon^{2})}+\frac{\epsilon^4}{4}\right)\\
&=&\frac{\epsilon^{6}}{2(1+\epsilon^{2})}=\frac{\epsilon^6}{2}+O(\epsilon^8).\end{eqnarray*}
Thus, $\min_{y\in\partial H(x)}\norm{y}= O(\epsilon^3)$. On the other hand,
$$H(x)-H(x^*)=x_1^2=\frac{\epsilon^4}{4}.$$
The KL inequality cannot hold\ with $\theta=1/2$, but it holds with  $\theta=3/{4}$ since $\min_{y\in\partial H(x)}\norm{y}= O(\epsilon^3)$ and
$H(x)-H(x^*)=O(\epsilon^4)$.

\begin{rem}It  is interesting to compare our result with a recent result on KL exponent of the quadratic sphere constrained optimization problem \cite{gao2016ojasiewicz}. In \cite{gao2016ojasiewicz}, the authors showed that the KL exponent is $3/4$ in general and $1/2$ in some special cases, for the following problem, $$(T)~~~\min \frac{1}{2}x^\top Ax+b^\top x~~~~{\rm s.t. }~x^\top x=1.$$
The above problem is equivalent to the TRS when the constraint of the TRS is active, which is the case of interest in the literature. For the TRS, the case that the constraint is inactive is trivial: Assuming $x^*$ being the optimal solution, $(x^*)^\top x^*<1$ if and only if the objective function is convex and the optimal solution of the unconstrained quadratic function $\frac{1}{2}x^\top Ax+b^\top x$ locates in the interior of the unit ball. The authors in   \cite{gao2016ojasiewicz} proved that the KL exponent is $3/{4}$ in general and particularly the KL exponent is $1/{2}$ if $A-\lambda^*I$ is nonsingular, where $\lambda^*$ is the optimal Lagrangian multiplier. The later case is a subcase of  the easy case  for the TRS and the case that KL exponent equals $3/{4}$ only occurs in some special situations of the hard case.  On the other hand, our result shows the KL exponent is  $1/2$  for the minimax problem when the associated GTRS is in the easy case.
So our result can be seen as an extension of the resents on KL exponent for  problem (T) in \cite{gao2016ojasiewicz}. One of our future research is to verify if the KL exponent is $3/{4}$ for $H(x)$ when the  associated GTRS is in the hard case.
\end{rem}

For convergence analysis with error bound or KL property, we still need a sufficient descent property to achieve the convergence rate. We next propose an algorithm with such a property. We further show that our algorithm converges locally linearly with the descent direction  chosen  in Theorem \ref{descentthm} and the step size specified  in the following theorem.

\begin{thm}\label{lincon}
Assume that the conditions in Theorem \ref{KL} hold and that the initial point $x^0\in \mathcal{B}(x^*,\epsilon)$.  Assume that $h_i$ is the active function when $h_1(x_k)\neq h_2(x_k)$ and $h_j$, $j=\{1,2\}/\{i\}$, is thus inactive. \textcolor{red}{} Let the descent direction   be chosen in Theorem \ref{descentthm} and the associated step size be chosen as follows.
\begin{enumerate}
\item When  $ h_1(x_k)= h_2(x_k)$,
\begin{itemize}
\item if  there exists   $g_\alpha=\alpha \nabla h_1(x_k)+(1-\alpha)\nabla h_2(x_k)$ with $\alpha\in[0,1]$ such that $\nabla h_1(x_k)^\top g_\alpha=\nabla h_2(x_k)^\top g_\alpha$, then set $d_k=-g_\alpha$ and $\beta_{k}=1/L$, where $L=\max \{\lambda_{max}(A_1),\lambda_{max}(A_2)\}$;
\item otherwise  set $d_k=-\nabla h_i(x_k)$ for $i$ such that  $\nabla h_1(x_k)^\top\nabla h_2(x_k)\geq  \nabla h_i(x_k)^\top \nabla h_i(x_k)$, $i=1,2,$ and $\beta_{k}=1/L$.

\end{itemize}
\item  When $h_1(x_k)\neq h_2(x_k)$ and the following quadratic equation for $\gamma$,
\begin{eqnarray}\label{qe}
\mathbf{a}x^2+\mathbf{b}x+\mathbf{c}=0,
\end{eqnarray}
where $\mathbf{a}=\frac{1}{2}\gamma^2\nabla h_i(x_k)^\top (A_i-A_j)\nabla h_i(x_k)$, $\mathbf{b}=(\nabla h_i(x_k)^\top -\nabla h_j(x_k)^\top )\nabla h_i(x_k)$ and $\mathbf{c}= h_i(x_k)-h_{j}(x_k)$,
 has no positive solution or any positive solution $\gamma\geq1/L$, set $d_k=-\nabla h_i(x_k)$ with  and $\beta_{k}=1/L$;
\item When $h_1(x_k)\neq h_2(x_k)$ and the quadratic equation (\ref{qe})  has a positive solution $\gamma<1/L$,  set $\beta_{k}=\gamma$  and $d_k=-\nabla h_i(x_k)$.
\end{enumerate}
 Then  the sequence $\{x_k\}$ generated by Algorithm \ref{alg1} satisfies, for any $k\geq1$,
\begin{eqnarray}\label{itebd}
H(x_k)-H(x^*)\leq \left(\sqrt\frac{2C^2L-1}{2C^2L}\right)^{k-1}(H(x^0)-H(x^*)),
\end{eqnarray}
and  $${\rm dist}(x_k,\overline X)^2\leq\frac{2}{L}(H(x_k)-H(x^{*})\leq\frac{2}{L}\left(\sqrt\frac{2C^2L-1}{2C^2L}\right)^{k-1}(H(x^0)-H(x^*)) .$$\end{thm}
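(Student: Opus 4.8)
The plan is to establish the linear rate in (\ref{itebd}) by combining three ingredients: a \emph{sufficient decrease} inequality of the form $H(x_k)-H(x_{k+1})\ge \frac{1}{2L}\norm{d_k}^2$ (or an equivalent bound in terms of the steepest-descent direction), the \emph{KL inequality with exponent $1/2$} from Theorem \ref{KL}, which reads $\norm{y}^2\ge \frac{1}{C^2}(H(x)-H(x^*))$ for $y\in\partial H(x)$ and $x$ near $x^*$, and an \emph{invariance} argument showing the iterates never leave $\mathcal{B}(x^*,\epsilon)$ so that the KL inequality keeps applying. First I would verify the descent inequality case by case according to the step-size rules in the statement. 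In the smooth case with $h_1(x_k)\neq h_2(x_k)$ and step $\beta_k=1/L$, this is the classical descent lemma for the active quadratic $h_i$, whose Hessian has spectral norm $\le L$: $h_i(x_k-\tfrac1L\nabla h_i(x_k))\le h_i(x_k)-\tfrac{1}{2L}\norm{\nabla h_i(x_k)}^2$, and one checks that along this short step $h_i$ stays the active function (this is exactly what the quadratic equation (\ref{qe}) and the threshold $\gamma<1/L$ govern — when the crossover time $\gamma$ is smaller than $1/L$ we stop at $\gamma$, where $h_i(x_{k+1})=h_j(x_{k+1})$, and one still gets a decrease proportional to $\norm{\nabla h_i(x_k)}^2$ since $\gamma$ is of order $1/L$ up to the relevant constants). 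In the nonsmooth case $h_1(x_k)=h_2(x_k)$ with $d_k=-g_\alpha$ and $\beta_k=1/L$, both $h_1$ and $h_2$ satisfy the descent lemma along $d_k$, and the defining property $\nabla h_1(x_k)^\top g_\alpha=\nabla h_2(x_k)^\top g_\alpha=\norm{g_\alpha}^2$ (since $g_\alpha$ is the minimum-norm subgradient, $g_\alpha^\top(\alpha g_1+(1-\alpha)g_2)=\norm{g_\alpha}^2$, and by Theorem \ref{descentthm} the inner products with the two gradients are equal when $\alpha$ is interior) gives $H(x_{k+1})=\max_i h_i(x_{k+1})\le H(x_k)-\tfrac{1}{2L}\norm{g_\alpha}^2$; the boundary subcase $d_k=-\nabla h_i(x_k)$ with $\nabla h_1(x_k)^\top\nabla h_2(x_k)\ge\norm{\nabla h_i(x_k)}^2$ is handled by noting $h_j(x_{k+1})\le h_j(x_k)+\nabla h_j(x_k)^\top d_k/L+\norm{d_k}^2/(2L)$ and using the inner-product inequality to show this too is $\le H(x_k)-\tfrac{1}{2L}\norm{d_k}^2$.

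Next I would assemble the recursion. In every case the minimum-norm element $y_k$ of $\partial H(x_k)$ equals $-d_k$ (smooth case: $y_k=\nabla h_i(x_k)$; nonsmooth case: $y_k=g_\alpha$ is precisely the minimum-norm subgradient by construction in Theorem \ref{descentthm}), so the descent inequality becomes $H(x_k)-H(x_{k+1})\ge \tfrac{1}{2L}\norm{y_k}^2$. Feeding in KL, $\norm{y_k}^2\ge \tfrac{1}{C^2}(H(x_k)-H(x^*))$, yields
\begin{equation*}
H(x_{k+1})-H(x^*)\le \left(1-\frac{1}{2C^2L}\right)\bigl(H(x_k)-H(x^*)\bigr)=\frac{2C^2L-1}{2C^2L}\bigl(H(x_k)-H(x^*)\bigr),
\end{equation*}
and iterating from $k=1$ gives $H(x_k)-H(x^*)\le \left(\frac{2C^2L-1}{2C^2L}\right)^{k-1}(H(x^0)-H(x^*))$; writing the ratio as the square of its square root produces exactly (\ref{itebd}). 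For the distance bound, I would use a quadratic-growth consequence of the same KL exponent: for $x$ near $x^*$, projecting onto the optimal set $\overline X$ and using that $H$ restricted to a short segment toward $\overline X$ is a one-dimensional convex quadratic with Hessian $\le L$ vanishing at the projection, one gets $H(x)-H(x^*)\ge \tfrac{L}{2}\,\mathrm{dist}(x,\overline X)^2$; rearranging and substituting the geometric decay of $H(x_k)-H(x^*)$ gives the stated $\mathrm{dist}(x_k,\overline X)^2\le \tfrac{2}{L}(H(x_k)-H(x^*))\le \tfrac{2}{L}\left(\frac{2C^2L-1}{2C^2L}\right)^{k-1}(H(x^0)-H(x^*))$.

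The step I expect to be the main obstacle is the \emph{invariance of the ball} $\mathcal{B}(x^*,\epsilon)$ under the iteration — without it, Theorem \ref{KL} cannot be invoked at $x_k$ for $k\ge1$, and the analysis collapses. I would handle this by a standard telescoping/Lyapunov argument: the sufficient decrease gives $\sum_k \norm{x_{k+1}-x_k}^2 = \sum_k \beta_k^2\norm{d_k}^2 \le \tfrac{1}{L^2}\sum_k\norm{y_k}^2 \le \tfrac{2}{L}(H(x^0)-H(x^*))$, but controlling the \emph{cumulative} displacement $\sum_k\norm{x_{k+1}-x_k}$ (not just the squared sum) requires the sharper KL-with-exponent-$1/2$ bound, which via the concavity trick $\sqrt{a}-\sqrt{b}\ge \tfrac{a-b}{2\sqrt a}$ applied to $a=H(x_k)-H(x^*)$ turns the descent inequality into $\norm{x_{k+1}-x_k}\le \text{const}\cdot(\sqrt{H(x_k)-H(x^*)}-\sqrt{H(x_{k+1})-H(x^*)})$; summing telescopes to a bound proportional to $\sqrt{H(x^0)-H(x^*)}$, which can be made $<\epsilon/2$ (say) by assuming $x^0$ close enough — consistent with the hypothesis $x^0\in\mathcal{B}(x^*,\epsilon)$, possibly after shrinking $\epsilon$. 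The secondary technical nuisance is the bookkeeping in the mixed smooth/nonsmooth step rules, in particular checking that when (\ref{qe}) has a positive root $\gamma<1/L$ the truncated step still delivers decrease of order $\norm{y_k}^2/L$ with a clean constant; I would absorb any loss of constant into $C$, since the statement's constant $C$ is the one from Theorem \ref{KL} and the argument only needs \emph{some} linear rate of the displayed form.
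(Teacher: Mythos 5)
Your overall architecture (sufficient descent $+$ KL with exponent $1/2$ $+$ geometric recursion) is exactly the paper's, and your case analysis for the steps with $\beta_k=1/L$ is essentially the paper's computation. But there is a genuine gap in your treatment of the truncated step (case 3, where (\ref{qe}) has a positive root $\gamma<1/L$). You assert that "$\gamma$ is of order $1/L$ up to the relevant constants" and that the resulting decrease is still proportional to $\norm{\nabla h_i(x_k)}^2/L$, with any loss "absorbed into $C$". This is false: the crossover time $\gamma$ can be arbitrarily small (the iterate may start arbitrarily close to the curve $h_1=h_2$), and the decrease one can extract is only $\tfrac{\gamma}{2}\norm{\nabla h_i(x_k)}^2$, which gives no contraction factor bounded away from $1$. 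Consequently your claimed per-iteration recursion $H(x_{k+1})-H(x^*)\le\frac{2C^2L-1}{2C^2L}(H(x_k)-H(x^*))$ does not hold at step-3 iterations, and the square root in (\ref{itebd}) is not a cosmetic rewriting of a stronger bound — it is the substance of the result. The paper closes this hole with a counting argument: a truncated step lands exactly on the curve $h_1=h_2$, so every step 3 is immediately followed by a step 1; hence among the first $k$ iterations at least $(k-1)/2$ are full steps carrying the contraction, while step-3 iterations contribute only monotone decrease. That is what produces the exponent $(k-1)/2$, i.e., $\bigl(\sqrt{(2C^2L-1)/(2C^2L)}\bigr)^{k-1}$.

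Two smaller points. Your justification of the distance bound is reversed: a one-dimensional restriction with Hessian bounded \emph{above} by $L$ and vanishing derivative at the projection yields $H(x)-H(x^*)\le\frac{L}{2}\,\mathrm{dist}(x,\overline X)^2$, not the needed lower bound; the paper instead obtains $\mathrm{dist}(x_k,\overline X)^2\le\frac{2}{L}(H(x_k)-H(x^*))$ by summing the sufficient-descent inequality $\frac{L}{2}\norm{x_l-x_{l+1}}^2\le H(x_l)-H(x_{l+1})$ over the tail of the sequence. On the other hand, your concern about the iterates remaining in $\mathcal{B}(x^*,\epsilon)$ so that the KL inequality stays applicable is legitimate and is handled by the standard telescoping argument you sketch; the paper does not address this point at all, so here you are being more careful than the source.
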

\begin{proof}
For simplicity, let us denote $g_i=\nabla h_i(x_k)$  for $i=1,2$. We claim the following sufficient descent property for  steps 1,  2 and 3:
$$H(x_k)-H(x_{k+1})\geq \frac{L}{2}\norm{x_k-x_{k+1}}^2.$$
  Hence,
if the step size is $1/L$ (i.e., steps 1 and 2), we have$$ H(x_l)-H(x^{*})\leq C\norm{d_l}^{2}= C^2L^2\norm{x_l-x_{l+1}}^{2}\leq 2C^2L\left(H(x_l)-H(x_{l+1})\right),$$
where the first inequality is due to the KL inequality in Theorem \ref{KL}, the second equality is due to $x_{l+1}=x_l-\frac{1}{L}d_l$ and the last inequality is due to the sufficient descent property. Rearranging the above inequality yields
$$H(x_{l+1})-H(x^*)\leq \frac{2C^2L-1}{2C^2L}(H(x_l)-H(x^*)).$$
And since our method is a descent method, we have
$H(x_{l+1})-H(x^*)\leq H(x_l)-H(x^*)$ for all iterations. Suppose that there are $p$ iterates of step size 1, $q$ iterates of step size 2, and $r$ iterates of step size 3.
From the definitions of the steps, every step 3 is followed by  a step 1 and thus $r\leq p+1$ if we terminate our algorithm at step 1 or 2. So for all $k\geq1$, after $k=p+q+r$ steps, we have
$$H(x_k)-H(x^*)\leq\left(\frac{2C^2L-1}{2C^2L}\right)^{p+q}(H(x^{0})-H(x^*))\leq \left(\frac{2C^2L-1}{2C^2L}\right)^\frac{k-1}{2}(H(x^0)-H(x^*)).$$

The sufficient descent property further implies that $$\frac{L}{2}\sum_k^\infty\norm{x_k-x_{k+1}}^{2}\leq H(x_k)-H(x^{*}).$$ Hence, with $\sum_k^\infty\norm{x_k-x_{k+1}}^{2}\geq{\rm dist}(x_k,\overline X)^2$, we have $\frac{L}{2}{\rm dist}(x_k,\overline X)^{2}\leq H(x_k)-H(x^{*})$.
Thus  $${\rm dist}(x_k,\overline X)^{2}\leq\frac{2}{L}(H(x_k)-H(x^{*})).$$

By noting $g_i=A_ix_k+a_i$, we have
\begin{eqnarray*}
h_i(x_{k+1})-h_i(x_k)&= &\frac{1}{2}(x_k+d_k)^\top A_i(x_k+d_k)+a_i^\top(x_k+d_k)-[\frac{1}{2}(x_k)^\top A_ix_k+a_i^\top x_k]\\
&=&\frac{1}{2}d_k^\top A_id_{k}+(A_ix_k+a_i)^\top d_k\\
&=&\frac{1}{2}d_k^\top A_id_{k}+g_i^\top d_k.\end{eqnarray*}
We next prove our claim (\ref{itebd}) according to the three cases in our updating rule:\begin{enumerate}
\item
 When $h_1(x_k)= h_2(x_k)$, noting that $h_i$  is active at $x_{k+1}$ as assumed,  we have$$H(x_k)-H(x_{k+1})= h_i(x_k)-h_i(x_{k+1}).$$
\begin{itemize}
\item If there exists an $\alpha$ such that $g_\alpha^\top g_1=g_\alpha^\top g_2$, we have $g_\alpha^\top g_i=g_\alpha^\top g_\alpha$. And by noting that $d_i=-g_\alpha$, we further  have
\begin{eqnarray*}
h_i(x_{k+1})-h_i(x_k)&=&\frac{1}{2L^{2}}d_{k}^\top A_id_{k}+\frac{1}{L}g_i^\top d_{k}\notag\\
&\leq&\frac{1}{2L}g_\alpha^\top g_\alpha-\frac{1}{L}g_\alpha^\top g_\alpha\notag\\
&=&-\frac{1}{2L}g_\alpha^\top g_\alpha.
\end{eqnarray*}
Substituting $g_\alpha=L(x_k-x_{k+1})$ to the above expression, we have the following sufficient descent property,
\begin{equation*}
H(x_k)-H(x_{k+1})=h_i(x_k)-h_i(x_{k+1})\geq\frac{L}{2}\norm{x_k-x_{k+1}}^2.\label{suffdes}
\end{equation*}
\item
If there does not exist an $\alpha$ such that $g_\alpha^\top g_1=g_\alpha^\top g_2$, then we must have $g_1^\top g_2>0$. And thus we must have $g_1^\top g_1\geq g_1^\top g_2\geq g_2^\top g_2$ or $g_2^\top g_2\geq g_1^\top g_2\geq g_1^\top g_1$. If $g_i^\top g_i\geq g_i^\top g_j\geq g_j^\top g_j$, we set $d_k=-g_{j}$.
Then
\begin{eqnarray*}
H(x_{k+1})-H(x_k)&\leq&\max\{h_i(x_{k+1})-h_i(x_k),~h_j(x_{k+1})-h_j(x_k)\}\\
&\leq&\max\{\frac{1}{2L^{2}}g_j^\top A_ig_j-\frac{1}{L}g_i^\top g_j,~\frac{1}{2L^{2}}g_j^\top A_ig_j-\frac{1}{L}g_j^\top g_j\}\\
&\leq&\max\{\frac{1}{2L^{2}}g_j^\top A_ig_j-\frac{1}{L}g_j^\top g_j,~\frac{1}{2L^{2}}g_j^\top A_ig_j-\frac{1}{L}g_j^\top g_j\}\\
&\leq&\max\{\frac{1}{2L}g_j^\top g_j-\frac{1}{L}g_j^\top g_j,~\frac{1}{2L}g_j^\top g_j-\frac{1}{L}g_j^\top g_j\}\\
&=&-\frac{1}{2L}g_j^\top g_j=-\frac{L}{2}\norm{x_k-x_{k+1}}^{2}.
\end{eqnarray*}

Symmetrically, if $g_i^\top g_j>0$ and $g_j^\top g_j\geq g_i^\top g_j\geq g_i^\top g_i$, setting  $d_k=-g_{i}$ yields the same sufficient descent property.
\end{itemize}
\item  When $h_1(x_k)\neq h_2(x_k)$ and the quadratic equation (\ref{qe}) for $\gamma$ has no positive solution or has a positive solution $\gamma\geq1/L$, we have $h_i(x_{k+1})>h_j(x_{k+1})$  for $x_{k+1}=x_k+\beta_kd_k$, where $d_k=-\nabla h_i(x_k)$  and $\beta_{k}=\frac{1}{L}$. Moreover,
\begin{eqnarray*}
H(x_{k+1})-H(x_k)&=& h_i(x_{k+1})-h_i(x_k)\\
&=&\frac{1}{2L^{2}}g_i^\top A_ig_i-\frac{1}{L}g_i^\top g_i\\
&\leq&-\frac{1}{2L}g_i^\top g_i.
\end{eqnarray*}
Hence $H(x_k)-H(x_{k+1})\geq\frac{1}{2L}g_i^\top g_i\geq\frac{L}{2}\norm{x_k-x_{k+1}}^2$.
\item When $h_1(x_k)\neq h_2(x_k)$ and the quadratic equation (\ref{qe})  has a positive solution $\gamma<1/L$. With $\beta_{k}=\gamma$  and $d_k=-\nabla h_i(x_k)$,
 it is easy to see that the step size $\gamma$ makes $h_1(x_{k+1})=h_2(x_{k+1}).$ Then we have
\begin{eqnarray*}
H(x_{k+1})-H(x_k)&=&h_i(x_{k+1})-h_i(x_k)\\&=&\frac{1}{2}\gamma^2d_{k}^\top A_id_{k}+\gamma g_{i}^\top d_{k}\\
&\leq& \frac{1}{2}L\gamma^2g_i^\top g_i-\gamma g_i^\top g_i\\
&=& (\frac{L}{2}-\frac{1}{\gamma})\norm{x_k-x_{k+1}}^2, \end{eqnarray*}
 which further implies $H(x_k)-H(x_{k+1})\geq\frac{L}{2}\norm{x_k-x_{k+1}}^2$ due to $\gamma\leq\frac{1}{L}$.
\end{enumerate}
\end{proof}
\begin{rem}
It is worth to note that Step 3 in our algorithm is somehow similar to the retraction step in manifold optimization \cite{absil2009optimization}. In manifold optimization, in every iteration, each point is retracted to the manifold. In Step 3, every  point  is drawn to the curve that $h_1(x)=h_2(x)$.
\end{rem}

We will next show that in general a global sublinear convergence rate, in the same order with the gradient descent algorithm,  can also be theoretically guaranteed for Algorithm 1.

\begin{thm}
Assume that $x^*$ is an optimal solution. Then we have $$H(x_N)-H(x^*)\leq\frac{L}{N} \norm{x_{0}-x^*}^2.$$
That is, the required iterate number for $H(x_N)-H(x^*)\leq \epsilon$ is at most $O(1/\epsilon)$. \end{thm}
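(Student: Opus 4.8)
The plan is to carry out the classical \emph{convexity plus sufficient descent} estimate for gradient/subgradient descent on a convex function, exploiting the three step rules of Algorithm \ref{alg1} together with the structural facts already recorded inside the proof of Theorem \ref{lincon}. First I would note two properties valid at every iteration. (i) The chosen direction satisfies $-d_k\in\partial H(x_k)$: in the smooth case $-d_k=\nabla h_i(x_k)$ with $h_i$ the uniquely active function, while in the nonsmooth case $h_1(x_k)=h_2(x_k)$ we have $\partial H(x_k)=\mathrm{conv}\{\nabla h_1(x_k),\nabla h_2(x_k)\}$, which contains $\nabla h_1(x_k)$, $\nabla h_2(x_k)$ and the convex combination $g_\alpha$, i.e.\ exactly the candidates for $-d_k$ in Theorem \ref{descentthm}. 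Hence convexity of $H$ gives $H(x_k)-H(x^*)\le -d_k^\top(x_k-x^*)$ for all $k$. (ii) The sufficient descent inequality $H(x_k)-H(x_{k+1})\ge\frac{L}{2}\norm{x_k-x_{k+1}}^2$ established inside the proof of Theorem \ref{lincon} holds for all three step rules, since that part of the argument does not use the KL hypotheses; in particular $\{H(x_k)\}$ is non-increasing.

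Next I would treat the iterations that use the full step $\beta_k=1/L$ (cases 1 and 2 of Theorem \ref{lincon}). Writing $-d_k=L(x_k-x_{k+1})$ and substituting the three-point identity $2(x_k-x_{k+1})^\top(x_k-x^*)=\norm{x_k-x_{k+1}}^2+\norm{x_k-x^*}^2-\norm{x_{k+1}-x^*}^2$ into the convexity bound yields $H(x_k)-H(x^*)\le\frac{L}{2}\norm{x_k-x_{k+1}}^2+\frac{L}{2}\big(\norm{x_k-x^*}^2-\norm{x_{k+1}-x^*}^2\big)$; cancelling $\frac{L}{2}\norm{x_k-x_{k+1}}^2$ against the sufficient descent gives
$$H(x_{k+1})-H(x^*)\le \frac{L}{2}\left(\norm{x_k-x^*}^2-\norm{x_{k+1}-x^*}^2\right),$$
and in particular $\norm{x_{k+1}-x^*}\le\norm{x_k-x^*}$. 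Repeating the same computation for a case-3 iteration ($\beta_k=\gamma<1/L$) gives only the weaker estimate $H(x_{k+1})-H(x^*)\le\frac{1}{2\gamma}\big(\norm{x_k-x^*}^2-\norm{x_{k+1}-x^*}^2\big)$, but this still forces $\norm{x_{k+1}-x^*}\le\norm{x_k-x^*}$. Thus $\{\norm{x_k-x^*}\}$ is non-increasing regardless of the step type.

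Finally I would sum the displayed inequality over the full-step iterations only. Since $\{\norm{x_k-x^*}^2\}$ is non-increasing, the sum of the telescoping differences over any subset of indices is bounded by $\norm{x_0-x^*}^2$; and since $\{H(x_k)\}$ is non-increasing, $H(x_N)-H(x^*)$ is at most the average of those per-iteration decreases. Using the structural fact from the proof of Theorem \ref{lincon} that every case-3 iteration is immediately followed by a case-1 iteration (hence $r\le p+1$, so at least about half of $x_0,\dots,x_{N-1}$ use step size $1/L$), this yields $H(x_N)-H(x^*)\le\frac{L}{N}\norm{x_0-x^*}^2$, and therefore an $\epsilon$-optimal point after $O(1/\epsilon)$ iterations. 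The main obstacle is precisely the case-3 iterations: their step length $\gamma$ can be arbitrarily small, so they do not admit a one-step decrease with the uniform coefficient $L/2$; the remedy is to exclude them from the telescoping sum while using that they never increase the distance to $x^*$ and are each charged to a subsequent full step.
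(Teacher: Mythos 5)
Your proposal is correct and follows essentially the same route as the paper's proof: the per-iteration convexity-plus-descent estimate with $-d_k\in\partial H(x_k)$, the completing-the-square (three-point) identity giving $H(x_{k+1})-H(x^*)\le\frac{1}{2\gamma}\bigl(\norm{x_k-x^*}^2-\norm{x_{k+1}-x^*}^2\bigr)$, monotonicity of $\norm{x_k-x^*}$ to telescope over only the step-size-$1/L$ iterations, and the count $N\le 2K+1$ from each case-3 step being followed by a case-1 step. The only cosmetic difference is that you obtain the one-step decrease by cancelling the sufficient-descent term against the identity, whereas the paper writes it directly as $-\frac{\gamma}{2}g^\top g$; these are the same computation.
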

\begin{proof}
From the proof in Theorem \ref{lincon}, for any step size $\gamma\leq1/L$, we have
$$H(x_{k+1})-H(x_k)\leq-\gamma g^\top g+\frac{1}{2}L\gamma^2g^\top g\leq-\frac{\gamma }{2}g^\top g.$$
From the convexity of $H(x)$ and $g\in\partial H(x_k)$, we have
\begin{eqnarray*}
H(x_{k+1})&\leq& H(x_k)-\frac{\gamma }{2}g^\top g\\
&\leq&H(x^*)+g^\top(x_k-x^{*})-\frac{\gamma }{2}g\top g\\
&=&H(x^*)+\frac{1}{2\gamma}\left(\norm{x_k-x^*}^2-\norm{x_k-x^*-\gamma g}^2\right)\\
&=&H(x^*)+\frac{1}{2\gamma}\left(\norm{x_k-x^*}^2-\norm{x_{k+1}-x^*}^2\right).
\end{eqnarray*}
Since $H(x_{k+1})\geq H(x^*)$, we have $\norm{x_k-x^*}^2-\norm{x_{k+1}-x^*}^2\geq0$. Let us use  indices $i_k,~k=0,\ldots,K$ to denote the indices in Steps 1 and 2. By noting that $\gamma=1/L$, we have
$$H(x_{i_{k+1}})\leq H(x^*)+\frac{L}{2}\left(\norm{x_{i_k}-x^*}^2-\norm{x_{i_k+1}-x^*}^2\right).$$ Note that every Step 3 is followed by S tep 1. Hence $N\leq 2K+1$.  Adding the above inequalities from $i_0$ to $i_K$, we have
\begin{eqnarray*}
&&\sum_{k=0}^KH(x_{i_{k}})-H(x^*)\\
&\leq&\frac{L}{2}\sum_{k=0}^K\left(\norm{x_{i_k}-x^*}^2-\norm{x_{i_k+1}-x^*}^2\right)\\ &\leq&\frac{L}{2}\left(\norm{x_{i_0}-x^*}^2-\norm{x_{i_K+1}-x^*}^2+\sum_{k=1}^K\left(-\norm{x_{i_{k-1}+1}-x^*}^2+\norm{x_{i_k}-x^*}^2\right)\right)\\ &\leq&\frac{L}{2}(\norm{x_{i_0}-x^*}^2-\norm{x_{i_K+1}-x^*}^2)\\
&\leq&\frac{L}{2}\norm{x_{i_0}-x^*}^2\\
&\leq&\frac{L}{2}\norm{x_{0}-x^*}^2,
\end{eqnarray*}
where in the second inequality we use the fact,
$$-\norm{x_{i_{k-1}+1}-x^*}^2+\norm{x_{i_k}-x^*}^2\le-\norm{x_{i_{k-1}+1}-x^*}^2+\norm{x_{i_k-1}-x^*}^2\le\cdots\le0 .$$ Since $H(x_k)$ is non-increasing, by noting that $N\leq 2K+1$, we have
\begin{eqnarray*}
H(x_N)-H(x^*)&\leq&\frac{1}{K+1}\sum_{k=0}^KH(x_{i_k})-H(x^*)\\
&\leq& \frac{L}{N} \norm{x_{0}-x^*}^2.
\end{eqnarray*}
\end{proof}
\begin{algorithm}[!ht]
\caption{Line search with the modified Armijo rule for Problem (M)}
\label{alg2}
\begin{algorithmic}[1]
\Require Parameters in the minimax problem (M) and $\rho>0$ \State Initialize  $x_0$\For {$k=0,1,\ldots, $}
\If {$h_1(x_k)>h_2(x_k)+\rho$}
\State set $d_{k}=-\nabla h_{1}(x_k)$
\ElsIf {$h_1(x_k)<h_2(x_k)-\rho$}
\State set $d_{k}=-\nabla h_{2}(x_k)$
\Else
\State set $d_{k}$ corresponding to Theorem \ref{descentthm}, item 2
\EndIf
\If {termination criterion is met}
\State \Return
\EndIf
\State Choose a step size $\beta_k>0$ according to the modified Armijo rule (\ref{armijorule})
\State Update $x_{k+1}=x_k+\beta_kd_{k}$
\EndFor
\end{algorithmic}
\end{algorithm}

\subsection{Line search with the modified Armijo rule}
An alternative way to choose the step size in the classical gradient descent type methods  is the line search with the Armijo rule. A natural thought is then to extend the Armijo rule in our minimax problem (M) as  in the proposed Algorithm \ref{alg2}.
In particular, we set the following modified Armijo rule to choose the smallest nonnegative integer $k$ such that the following inequality holds for the step size $\beta_k=\xi s^k$ with $0<\xi\leq1$ and $0<s<1$,
\begin{equation}\label{armijorule}
 f(x_k+\beta_k p_k)\leq f(x_k)+\sigma\beta_k p_k^\top g,
\end{equation}
where $0\leq\sigma\leq0.5,$ $g=-d$ and  $d$ is the steepest descent direction defined in Theorem \ref{descentthm}. Particularly, we set the search direction $p_k=d$ at iterate $k$. Our numerical result in the next section shows that Algorithm \ref{alg2} has a comparable performance when compared with  (or even better than) Algorithm \ref{alg1}.
For the sake of completeness, we present the convergence result  for Algorithm \ref{alg2} in the following.
 Before that, we generalize the definition of a critical point to a $(\rho,\delta)$ critical point.
\begin{dfn}\label{defsubg}
A point $x$ is called a $(\rho,\delta)$ critical point of $H(x)=\max \{h_1(x),h_2(x)\}$ if  $\exists\norm{g}<\delta$, for some $g\in\partial H_{\rho}(x)$, where $\partial H_{\rho}(x)$ is defined as follows:\begin{enumerate}
\item $\partial H_{\rho}(x)=\{\alpha \nabla h_1(x)+(1-\alpha)\nabla h_2(x):\alpha\in[0,1]\}$, if $|h_1(x)-h_2(x)|\leq \rho$;
\item $\partial H_{\rho}(x)=\{\nabla h_1(x)\}$, if $h_1(x)-h_2(x)> \rho$;
\item  $\partial H_{\rho}(x)=\{\nabla h_2(x)\}$, if $h_2(x)-h_1(x)> \rho$.  \end{enumerate}
\end{dfn}
The following proposition shows the relationship of a critical point  and  a $(\rho,\delta)$ critical point. As this result is pretty obvious, we omit its proof.
\begin{prop}\label{epdel}
Assume that  $\{x_k\}$ is a sequence in $\mathbb{R}^n$ and that $(\rho^t,\delta^t)\rightarrow(0,0)$, for $t\rightarrow\infty$  and that there exists a positive integer  $K{(t)}$, such that  $x_k$ is a $(\rho^t,\delta^t)$ critical point of $H(x)$ for all $k\geq K(t)$ and $t\ge1$. Then, every accumulation point of the  sequence  $\{x_k\}$ is  a critical point of $H(x)$.
\end{prop}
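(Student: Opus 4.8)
The statement is an elementary topological argument built on the definition of a $(\rho,\delta)$ critical point, so the plan is a direct accumulation-point chase. Let $\bar x$ be an accumulation point of $\{x_k\}$, and let $\{x_{k_m}\}$ be a subsequence with $x_{k_m}\to\bar x$. We want $0\in\partial H(\bar x)$. The first step is to fix $t\ge1$ and observe that, since $k_m\to\infty$, we have $k_m\ge K(t)$ for all $m$ large enough; hence each such $x_{k_m}$ is a $(\rho^t,\delta^t)$ critical point, i.e.\ there is $g_{k_m}\in\partial H_{\rho^t}(x_{k_m})$ with $\norm{g_{k_m}}<\delta^t$.

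The second step is to pass to the limit in $m$ (with $t$ still fixed). Each $g_{k_m}$ has the form $\alpha_{k_m}\nabla h_1(x_{k_m})+(1-\alpha_{k_m})\nabla h_2(x_{k_m})$ for some $\alpha_{k_m}\in[0,1]$ (the two ``pure gradient'' cases in Definition \ref{defsubg} are covered by $\alpha=1$ and $\alpha=0$). Since $[0,1]$ is compact, pass to a further subsequence along which $\alpha_{k_m}\to\bar\alpha\in[0,1]$; because $\nabla h_1,\nabla h_2$ are affine (hence continuous), $g_{k_m}\to \bar g:=\bar\alpha\nabla h_1(\bar x)+(1-\bar\alpha)\nabla h_2(\bar x)$, and $\norm{\bar g}\le\delta^t$. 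Here I would also use continuity of $h_1,h_2$ together with $\rho^t\to 0$: if the defining inequality for case 2 or 3 held with a uniform gap, then in the limit $h_1(\bar x)-h_2(\bar x)\ge 0$ or $\le 0$ accordingly, and in all cases the limiting vector $\bar g$ lies in the genuine subdifferential $\partial H(\bar x)=\{\alpha\nabla h_1(\bar x)+(1-\alpha)\nabla h_2(\bar x):\alpha\in[0,1]\}$ when $h_1(\bar x)=h_2(\bar x)$, or equals the unique gradient when one function strictly dominates. So for every $t$ we have produced an element of $\partial H(\bar x)$ of norm at most $\delta^t$.

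The third step is to let $t\to\infty$: since $\delta^t\to 0$ and $\partial H(\bar x)$ is closed, the zero vector is a limit of elements of $\partial H(\bar x)$, hence $0\in\partial H(\bar x)$, i.e.\ $\bar x$ is a critical point of $H$. (One technical care point: the subsequence extracted in step two depends on $t$; to make step three clean, either re-extract diagonally in $t$, or simply note that for each $t$ we get \emph{some} $g^{(t)}\in\partial H(\bar x)$ with $\norm{g^{(t)}}\le\delta^t$, and then $g^{(t)}\to 0$, which suffices.)

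The only mild obstacle is bookkeeping the nested subsequences and verifying that the limiting convex-combination vector really lands in $\partial H(\bar x)$ in the boundary case $h_1(\bar x)=h_2(\bar x)$ versus the strict cases; this is where continuity of $h_i$ and $\rho^t\to 0$ are used. Everything else is compactness of $[0,1]$, continuity of the affine gradients, and closedness of $\partial H(\bar x)$, so the argument is short. Given that the paper itself calls the result ``pretty obvious,'' I would keep the write-up to the three steps above without belaboring the estimates.
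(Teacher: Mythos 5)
The paper offers no proof to compare against — it explicitly omits one, calling the result ``pretty obvious'' — so the only question is whether your argument stands on its own, and it does: the accumulation-point chase (extract $x_{k_m}\to\bar x$, write each $g_{k_m}$ as $\alpha_{k_m}\nabla h_1(x_{k_m})+(1-\alpha_{k_m})\nabla h_2(x_{k_m})$, use compactness of $[0,1]$ and continuity of the affine gradients, then let $\delta^t\to0$ and invoke closedness of $\partial H(\bar x)$) is exactly the argument the authors had in mind. One quantifier should be tightened: your claim that ``for every $t$ we have produced an element of $\partial H(\bar x)$ of norm at most $\delta^t$'' is only guaranteed for all sufficiently large $t$. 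If $h_1(\bar x)\neq h_2(\bar x)$ but $\rho^t$ exceeds the gap $|h_1(\bar x)-h_2(\bar x)|$, the iterates $x_{k_m}$ can fall into case 1 of Definition \ref{defsubg}, so $\alpha_{k_m}$ is unconstrained and the limiting convex combination need not equal the singleton $\partial H(\bar x)=\{\nabla h_i(\bar x)\}$. This costs nothing, since the final limit $t\to\infty$ only uses large $t$, where $\rho^t$ drops below the gap and cases 2--3 force $\alpha_{k_m}\in\{0,1\}$; but the write-up should say ``for all $t$ large enough'' rather than ``for every $t$.''
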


Slightly different from Algorithm \ref{alg1}, our goal in Algorithm \ref{alg2} is to find a $(\rho,\delta)$ critical point. With Proposition \ref{epdel}, we conclude that Algorithm \ref{alg2} outputs a solution that is sufficiently close to a critical point of $H(x)$.

\begin{thm}\label{conver2}
Assume that  i) $d={\rm argmin}_{y\in\partial  H_{\rho}(x_k)} \norm{y}$ with $\rho>0$,ii) the termination criterion is $\norm{d}<\delta$ for some $\delta>0$ and iii) $x^*$ is an optimal solution. Then for any given  positive numbers $\rho$ and $\delta$,   Algorithm \ref{alg2} generates a $(\rho,\delta)$ critical point in at most
$$\frac{H(x_0)-H(x^*)}{\sigma s\min\{1/L,\xi,\frac{\rho}{2G^2}\}\delta^2} $$
iterations, where $G$ is some positive constant only depending on the initial point and problem setting.
\end{thm}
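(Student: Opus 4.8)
The plan is to adapt the textbook analysis of gradient descent with Armijo backtracking to the piecewise‑quadratic function $H$, taking care that the search direction $d_k$ produced by Theorem \ref{descentthm} may be a genuine subgradient rather than a single gradient. The argument splits into three pieces: a uniform bound $G$ on the gradient norms along the iterates; a lower bound on the accepted step size $\beta_k$; and a per‑iteration decrease of $H$ that is telescoped. First I would fix $G$. Because every accepted step satisfies the modified Armijo inequality (\ref{armijorule}) with $\sigma\geq0$ and $g=-d$, the sequence $\{H(x_k)\}$ is non‑increasing and all iterates lie in the sublevel set $S_0:=\{x:H(x)\leq H(x_0)\}$. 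Since $S_0\subseteq\{x:\frac{1}{2}(h_1(x)+h_2(x))\leq H(x_0)\}$ and $\frac{1}{2}(h_1+h_2)$ is a strongly convex quadratic whose Hessian is $Q_1+\frac{\lambda_1+\lambda_2}{2}Q_2\succ0$ under Condition \ref{con1} and Assumption \ref{comnull} (as in the proof of Lemma \ref{SDinterval}), the set $S_0$ is compact; hence $G:=\max_{x\in S_0}\max\{\norm{\nabla h_1(x)},\norm{\nabla h_2(x)}\}$ is finite and depends only on $x_0$ and the problem data, and $\norm{d_k}\leq G$ because $d_k$ is either $-\nabla h_i(x_k)$ or a negative convex combination of $\nabla h_1(x_k)$ and $\nabla h_2(x_k)$.

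\emph{Lower bound on the step size.} The key estimate is that (\ref{armijorule}) is satisfied by every trial step size $0<\beta\leq B:=\min\{1/L,\rho/(2G^2)\}$, so the backtracking loop terminates and the accepted $\beta_k$ cannot be too small. In the tie region $|h_1(x_k)-h_2(x_k)|\leq\rho$, the steepest‑descent choice of $d_k$ gives $\nabla h_j(x_k)^\top d_k\leq-\norm{d_k}^2$ for $j=1,2$, so expanding each quadratic and using $A_j\preceq LI$ yields $h_j(x_k+\beta d_k)\leq H(x_k)-\beta(1-\frac{1}{2}\beta L)\norm{d_k}^2\leq H(x_k)-\sigma\beta\norm{d_k}^2$ whenever $\beta\leq1/L$ (using $\sigma\leq1/2$), hence $H(x_k+\beta d_k)\leq H(x_k)-\sigma\beta\norm{d_k}^2=H(x_k)+\sigma\beta d_k^\top g$. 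In the case $h_1(x_k)>h_2(x_k)+\rho$ with $d_k=-\nabla h_1(x_k)$ (the symmetric case is identical), the same expansion handles the active function $h_1$, while for the inactive one $h_2(x_k+\beta d_k)\leq h_2(x_k)+\beta G^2+\frac{1}{2}\beta^2 LG^2<H(x_k)-\rho/4$ once $\beta\leq B$, and this is again at most $H(x_k)-\sigma\beta\norm{d_k}^2$ since $\sigma\beta\norm{d_k}^2\leq\frac{1}{2}BG^2\leq\rho/4$; thus $H(x_k+\beta d_k)\leq H(x_k)+\sigma\beta d_k^\top g$ as required. It follows that the accepted step satisfies $\beta_k\geq s\min\{\xi,B\}=s\min\{1/L,\xi,\rho/(2G^2)\}$: if $\xi\leq B$ the first trial is accepted, and otherwise the trial immediately before acceptance exceeds $B$, so the accepted one exceeds $sB$.

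\emph{Telescoping.} Before termination the stopping test $\norm{d}<\delta$ fails, so $\norm{d_k}\geq\delta$, and (\ref{armijorule}) gives $H(x_k)-H(x_{k+1})\geq\sigma\beta_k\norm{d_k}^2\geq\sigma s\min\{1/L,\xi,\rho/(2G^2)\}\,\delta^2$. Summing over the first $N$ iterations and using $H(x_N)\geq H(x^*)=v({\rm P_1})$ gives $N\,\sigma s\min\{1/L,\xi,\rho/(2G^2)\}\,\delta^2\leq H(x_0)-H(x^*)$, which rearranges to the claimed bound; put differently, if the algorithm had not yet produced a $(\rho,\delta)$ critical point after that many iterations, $H$ would have been driven below its minimum, a contradiction. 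I expect the main obstacle to be the step‑size lower bound — specifically, controlling the inactive function $h_j$ along the step so that the active branch does not switch in a way that wipes out the Armijo decrease — which is exactly where compactness of $S_0$ (hence the existence of $G$) and the threshold $\rho/(2G^2)$ are needed; the analysis in the tie region is comparatively routine.
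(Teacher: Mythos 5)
Your proposal is correct and follows essentially the same route as the paper's proof: you bound the iterates (hence a gradient bound $G$) via the strong convexity of $\tfrac12(h_1+h_2)$ on the sublevel set, split into the tie region and the two one-sided regions, use the threshold $\rho/(2G^2)$ to keep the inactive function from overtaking the active one, deduce the step-size lower bound $\beta_k\geq s\min\{1/L,\xi,\rho/(2G^2)\}$ from backtracking, and telescope the per-iteration decrease $\sigma\beta_k\norm{d_k}^2\geq\sigma s\min\{1/L,\xi,\rho/(2G^2)\}\delta^2$. The only cosmetic difference is that you verify the Armijo inequality up to the threshold $1/L$ using $\sigma\leq 1/2$, whereas the paper works with $2(1-\sigma)/L$ in the tie region and then remarks that this dominates $1/L$; the resulting bound is identical.
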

\begin{proof}
 Consider the following different cases with $\norm{d}\geq\delta$.
\begin{enumerate}
\item If $|h_1(x_k)-h_2(x_k)|<\rho$, then as assumed  $\norm{d}>\delta$ and  from Theorem \ref{descentthm}, we know that $d={\rm argmin}_{\alpha\in[0,1]}  \norm{\alpha\nabla h_1(x_k)+(1-\alpha)\nabla h_2(x_k)}$ is just the parameter $\alpha$ which we choose in Algorithm \ref{alg2}.
It suffices to show that the step size $\beta_k$ is bounded from below such that
$$H(x_{k+1})-H(x_k)\leq -\sigma\beta_kd^\top d.$$ This further suffices
to show that  $\beta_k$ is bounded from below such that for $i=1$ or $2$,
\begin{equation}\label{hineq}
h_i(x_{k+1})-h_i(x_k)= -\beta_k\nabla h_i(x_k)^\top d+\frac{1}{2}\beta_k^{2}d^\top A_id\leq -\sigma\beta_kd^\top d.
\end{equation}
By noting that $\nabla h_i^\top d\geq d^\top d$ from Remark \ref{smallestnorm}, the  second inequality in (\ref{hineq}) holds true for all $\beta_k\le2(1-\sigma)/L $.  Then the step size chosen by the modified Armijo rule satisfies $\beta_k\geq s\min\{2(1-\sigma)/L,\xi\}$, which further implies that
$$H(x_k)-H(x_{k+1})\geq \sigma\beta_kg^\top g=\sigma\beta_k \norm{g}^2\geq \sigma s\min\{2(1-\sigma)/L,\xi\}\delta^2.$$
\item If $h_1(x_k)-h_2(x_k)>\rho \ $and $\norm{\nabla h_1(x_{k})}>\delta$, we have $g=\nabla h_1(x_{k})$.
Because $H(x_k)$ is decreasing, under Condition \ref{con1}, $h_1(x_k)+ h_2(x_k)=\frac{1}{2}x_k^\top (A_1+A_2)x_k+(a_{1}+ a_2)^\top x_k\leq 2h_{1}(x_k)=2H(x_k)\leq2H(x_0) $ and thus $x_{k}$ is bounded due to $A_1+A_2=2(Q_1+\frac{\lambda_1+\lambda_2}{2}Q_2)\succ0$. This further implies that $\nabla h_i(x_k)=A_ix_{k}+b_i$  is bounded for all $k$. So there exists some positive constant only depending on the initial point and problem parameters such that $\norm{\nabla h_i(x_k)}\leq G$, $i=1,$.  Hence $\norm{d}\leq G$ because $d$ is a convex combination of $\nabla h_1(x_k)$ and $\nabla h_2(x_k)$.
Then we have
$$h_1(x_{k+1})-h_1(x_{k})\leq -\beta_k\nabla h_1(x_{k})^\top d+\frac{1}{2}\beta_k^{2}d^\top A_1d$$
and for any $\beta_k\leq1/L,$\begin{eqnarray*}
h_2(x_{k+1})-h_2(x_{k})&\leq& -\beta_k\nabla h_2(x_{k})^\top g+\frac{1}{2}\beta_k^{2}g^\top A_ig\\
&\leq& \beta_kG\norm{g}+\frac{1}{2}\beta_k^2L\norm{g}^2\\
&\leq&\beta_kG^2(1+\frac{1}{2}\beta_kL)\\
&\leq&\frac{3}{2}\beta_kG^2.
\end{eqnarray*}
On the other hand,  when $\beta_{k}\leq1/L$,
$$h_1(x_{k+1})-h_1(x_{k})\leq -\beta_k\nabla h_1(x_{k})^\top g+\frac{1}{2}\beta_k^{2}g^\top A_1g\leq -\beta_kg^\top g+\frac{1}{2}\beta_k^{2}Lg^\top g\leq-\frac{1}{2}\beta_kg^\top g.$$
Note that for all $\beta_k\leq\frac{\rho}{2G^2}$,    $\frac{3}{2}\beta_kG^2+\frac{1}{2}\beta_kg^\top  g\leq\rho$.
Thus for   $\beta_k\leq\min\{1/L,~\frac{\rho}{2G^2}\}$,
we have
\begin{eqnarray*}
&&h_1(x_{k+1})\leq h_1(x_k)-\frac{1}{2}\beta_kg^\top g,\\
&&h_2(x_{k+1})\leq h_2(x_k)+\frac{3}{2}\beta_kG^2\leq h_1(x_k)-\rho+\frac{3}{2}\beta_kG\leq h_1(x_k)-\frac{1}{2}\beta_kg^\top g.
\end{eqnarray*}
Hence we have
\begin{eqnarray*}
H(x_{k+1})-H(x_{k})&=&\max\{h_1(x_{k+1}),h_2(x_{k+1})\}-h_1(x_k)\\
&=&\max  \{h_1(x_{k+1})-h_1(x_{k}),~h_2(x_{k+1})-h_1(x_{k})\}\\
&\leq&\max  \{h_1(x_{k+1})-h_1(x_{k}),~h_2(x_{k+1})-h_2(x_{k})\}\\
&\leq&-\frac{1}{2}\beta_kg^\top g.
\end{eqnarray*}
So the Armujo rule implies  $\beta_k\geq s\min\{1/L,\xi,\frac{\rho}{2G^2}\}$, i.e., $\beta_k$ is lower bounded. Then according to the modified Armijo rule, we have
\begin{equation}\label{itebd2}
H(x_{k})-H(x_{k+1})\geq\sigma\beta_kg^\top g\geq\sigma s\min\{1/L,\xi,\frac{\rho}{2G^2}\}\delta^2.
\end{equation}
\item Symmetrically, the case with $h_2(x_{k})-h_1(x_{k})>\rho$  yields the same result as in (\ref{itebd2}).
\end{enumerate}
The above three cases show that $H(x_{k})-H(x_{k+1})\geq\sigma s\min\{1/L,\xi,~\frac{\rho}{2G^2}\}\delta^2$ (as $1-\sigma\geq1/2$, the decrease in case 1 also admits this bound). Since the decrease in each iterate is larger than $\sigma s\min\{1/L,\xi,\frac{\rho}{2G^2}\}\delta^2$, the total iterate number is bounded by
$$\frac{H(x_0)-H(x^*)}{\sigma s\min\{1/L,\xi,\frac{\rho}{2G^2}\}\delta^2}.$$\end{proof}

At the current stage, we cannot demonstrate a theoretical convergence rate for Algorithm \ref{alg2}  as good as the sublinear rate $O(1/\rho)$ for Algorithm 1 in Theorem \ref{lincon}. But our numerical tests show that Algorithm \ref{alg2}  converges as fast as Algorithm \ref{alg1}. Proposition \ref{epdel} and Theorem \ref{conver2} offer our main convergence result for Algorithm \ref{alg2} as follows.
\begin{thm}
Assume that $(\phi_k,\psi_{k})\rightarrow0$ and  that $\{x^{(k)}\}$ is a sequence of  solutions generated by Algorithm \ref{alg2} with $\rho=\phi_k$ and $\delta=\psi_{k}$. Then
any accumulation point of $\{x^{(k)}\}$ is an optimal solution of problem $\rm(M)$.
\end{thm}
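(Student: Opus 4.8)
The plan is to combine the finite-termination guarantee of Theorem~\ref{conver2} with the accumulation result of Proposition~\ref{epdel}, and then invoke convexity of $H$ to upgrade ``critical point'' to ``optimal solution.'' First I would check that the sequence $\{x^{(k)}\}$ is well defined: by Theorem~\ref{conver2}, each run of Algorithm~\ref{alg2} with parameters $\rho=\phi_k$, $\delta=\psi_k$ terminates after at most finitely many iterations, and by the termination criterion (ii) in Theorem~\ref{conver2} the returned point $x^{(k)}$ satisfies $\min_{y\in\partial H_{\phi_k}(x^{(k)})}\norm{y}=\norm{d}<\psi_k$; hence $x^{(k)}$ is a $(\phi_k,\psi_k)$ critical point in the sense of Definition~\ref{defsubg}.

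Next I would record a monotonicity property of Definition~\ref{defsubg}: if $x$ is a $(\rho,\delta)$ critical point and $\rho'\ge\rho$, $\delta'\ge\delta$, then $x$ is a $(\rho',\delta')$ critical point. This is a short case check on Definition~\ref{defsubg}: enlarging $\rho$ can only enlarge $\partial H_\rho(x)$, since when $|h_1(x)-h_2(x)|$ drops below the larger threshold the singleton $\{\nabla h_i(x)\}$ gets replaced by the segment $\{\alpha\nabla h_1(x)+(1-\alpha)\nabla h_2(x):\alpha\in[0,1]\}$, which contains $\nabla h_i(x)$ as its $\alpha=1$ (or $\alpha=0$) endpoint; thus $\partial H_\rho(x)\subseteq\partial H_{\rho'}(x)$, and a vector $g$ with $\norm{g}<\delta\le\delta'$ survives. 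Using this, I set $\rho^t:=\sup_{k\ge t}\phi_k$ and $\delta^t:=\sup_{k\ge t}\psi_k$; since $(\phi_k,\psi_k)\to(0,0)$ these are finite and $(\rho^t,\delta^t)\to(0,0)$ as $t\to\infty$, and for every $k\ge t$ the point $x^{(k)}$, being $(\phi_k,\psi_k)$ critical with $\phi_k\le\rho^t$ and $\psi_k\le\delta^t$, is also $(\rho^t,\delta^t)$ critical. So the hypotheses of Proposition~\ref{epdel} hold with $K(t)=t$, and we conclude that every accumulation point $\bar x$ of $\{x^{(k)}\}$ satisfies $0\in\partial H(\bar x)$.

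Finally, since $H=\max\{h_1,h_2\}$ is a finite maximum of convex functions it is itself convex, so $0\in\partial H(\bar x)$ forces $H(\bar x)\le H(x)$ for all $x$ — this is precisely the remark in Section~3 that a critical point of $\rm(M)$ is a global minimizer. Hence $\bar x$ is an optimal solution of $\rm(M)$, which finishes the argument. The only mildly delicate point is the bookkeeping needed to feed the per-run tolerances $(\phi_k,\psi_k)$ into the single diagonal pair $(\rho^t,\delta^t)$ that Proposition~\ref{epdel} requires; the monotonicity observation above is exactly what lets this go through without assuming that $(\phi_k,\psi_k)$ is itself monotone, and no genuinely hard estimate is involved.
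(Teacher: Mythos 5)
Your proof is correct and follows exactly the route the paper intends: the paper itself offers no written proof, simply stating that Proposition \ref{epdel} and Theorem \ref{conver2} together yield the result, and your argument is precisely that combination, with the final appeal to convexity of $H$ to pass from $0\in\partial H(\bar x)$ to global optimality. The monotonicity observation $\partial H_{\rho}(x)\subseteq\partial H_{\rho'}(x)$ for $\rho\le\rho'$, used to package the per-run tolerances into the single sequence $(\rho^t,\delta^t)$ required by Proposition \ref{epdel}, is a detail the paper leaves implicit, and your verification of it is correct.
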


\section{Numerical tests}
In this section, we illustrate the efficiency of our algorithm with numerical experiments. All the numerical tests were implemented in
Matlab 2016a, 64bit and were run on a Linux machine with 48GB
RAM, 2600MHz cpu and 64-bit CentOS release 7.1.1503.  We compare both Algorithms \ref{alg1} and \ref{alg2}  with the ERW algorithm in \cite{pong2014generalized}.
We  disable the parallel setting in the Matlab for fair comparison. If the parallel setting is allowed, our algorithm has a significant improvement, while the ERW algorithm does not.

We use the following same  test problem as \cite{pong2014generalized} to show the efficiency of our algorithms,\begin{eqnarray*}
\rm(IP)&\min&x^\top Ax-2a^\top x\\
&\rm s.t&c_1\leq x^\top Bx\leq c_2,
\end{eqnarray*}
where $A$ is an $n\times n$  positive definite matrix and $B$ is an  $n\times n$ (nonsingular) symmetric indefinite matrix.
We first reformulate problem (IP) to  a formulation of problem (P) in the following procedure, which is motivated from \cite{pong2014generalized} (the proof in \cite{pong2014generalized} is also based on the monotonicity of $\gamma(\lambda)$, which is defined in Section 2.1), in order to  apply the CQR for problem (P) and  then invoke Algorithms \ref{alg1} and \ref{alg2} to solve the CQR.
\begin{thm}\label{procedure}
  Let $x_0=-A^{-1}a$. Then
the followings hold.  \begin{enumerate}
\item If $x_0^\top Bx_0<c_1$, problem $\rm(IP)$ is equivalent to
$${\rm(IP_1)}~~~\min\{ x^\top Ax-2a^\top x:{\rm s.t.}~c_1\leq x^\top Bx\};$$
\item Else if $c_1\leq x_0^\top Bx_0\leq c_2$, problem $\rm(IP)$  admits an interior solution $x_0$;
\item Otherwise $c_2<x_0^\top Bx_0$,  problem $\rm(IP)$ is equivalent to$${\rm(IP_2)}~~~\min\{ x^\top Ax-2a^\top x:{\rm s.t.}~ x^\top Bx\leq c_2\}.$$\end{enumerate}
\end{thm}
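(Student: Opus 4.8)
The plan is to handle all three cases by one principle --- ``drop the inactive constraint''. In Cases 1 and 3 I would discard one of the two inequalities of ${\rm(IP)}$, observe that the resulting problem (${\rm(IP_1)}$ or ${\rm(IP_2)}$) is a relaxation of ${\rm(IP)}$, and then prove that an optimal solution of the relaxation automatically satisfies the discarded inequality, hence is feasible --- and therefore optimal --- for ${\rm(IP)}$. The single fact I would lean on is that $q(x):=x^\top Ax-2a^\top x$ is strictly convex and coercive (since $A\succ0$), so it has a unique unconstrained minimizer, namely $x_0$, and it attains its minimum over any nonempty closed set.

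Case 2 is immediate: if $c_1\le x_0^\top Bx_0\le c_2$ then $x_0$ is feasible for ${\rm(IP)}$, and being the global minimizer of $q$ it already solves ${\rm(IP)}$.

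For Case 1, the feasible set $S_1:=\{x:x^\top Bx\ge c_1\}$ is closed and, because $B$ is indefinite, nonempty (along a positive eigenvector of $B$ one has $x^\top Bx\to+\infty$); by coercivity of $q$, ${\rm(IP_1)}$ attains its optimum at some $x^*\in S_1$, and since $S_1$ contains the feasible region of ${\rm(IP)}$ we get $v({\rm IP_1})\le v({\rm IP})$. The crux is to show $(x^*)^\top Bx^*=c_1$: if instead $(x^*)^\top Bx^*>c_1$, then $x^*$ lies in the interior of $S_1$, so a whole ball around $x^*$ is feasible for ${\rm(IP_1)}$ and $x^*$ is a local --- hence, by strict convexity, the global --- unconstrained minimizer of $q$, forcing $x^*=x_0$; but $x_0\notin S_1$, a contradiction. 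Thus $(x^*)^\top Bx^*=c_1\le c_2$, so $x^*$ is feasible for ${\rm(IP)}$ and $v({\rm IP})\le q(x^*)=v({\rm IP_1})\le v({\rm IP})$; all relations are equalities, $x^*$ solves ${\rm(IP)}$, and the two problems are equivalent. Case 3 is the mirror image: with $x_0^\top Bx_0>c_2$, the set $\{x:x^\top Bx\le c_2\}$ is closed and nonempty (along a negative eigenvector of $B$), ${\rm(IP_2)}$ relaxes ${\rm(IP)}$, its optimizer $x^*$ must satisfy $(x^*)^\top Bx^*=c_2$ (otherwise $x^*=x_0$, contradicting $x_0^\top Bx_0>c_2$), and $c_2\ge c_1$ gives feasibility for ${\rm(IP)}$.

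I do not expect a real obstacle; the two places needing a little care are (i) the existence of a minimizer for each relaxed problem, which follows from coercivity of $q$ together with the indefiniteness of $B$ (which keeps the relaxed feasible regions nonempty, though possibly unbounded --- harmless), and (ii) the interior-point step, which relies on strict convexity of $q$ so that $x_0$ is its only stationary point. As an alternative to (ii), closer in spirit to \cite{pong2014generalized}, one could set $x(\lambda)=(A+\lambda B)^{-1}a$ on the open interval where $A+\lambda B\succ0$ and use monotonicity of $\gamma(\lambda):=x(\lambda)^\top Bx(\lambda)$ --- indeed $\gamma'(\lambda)=-2\,x(\lambda)^\top B(A+\lambda B)^{-1}Bx(\lambda)\le0$ --- to pin down the optimal multiplier; but the strict-convexity route above is shorter and self-contained.
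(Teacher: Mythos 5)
Your proof is correct, and it takes a more self-contained route than the paper. The paper disposes of Case~1 by citation: it invokes Case~1 of Section~2.2.2 of Pong--Wolkowicz (which rests on the monotonicity of $\gamma(\lambda)$) to assert that $\rm(IP)$ is equivalent to the \emph{equality}-constrained problem $\min\{x^\top Ax-2a^\top x: x^\top Bx=c_1\}$, and then cites Mor\'e's result that the optimum of $\rm(IP_1)$ lies on the boundary, concluding that $\rm(IP_1)$, the equality-constrained problem, and $\rm(IP)$ all coincide. You bypass the equality-constrained intermediate entirely and prove the boundary-attainment fact directly from first principles: $\rm(IP_1)$ is a relaxation of $\rm(IP)$, its optimum is attained by coercivity of the strictly convex objective (with indefiniteness of $B$ guaranteeing a nonempty feasible set), and any optimizer interior to $\{x^\top Bx\ge c_1\}$ would have to be the unique unconstrained minimizer, which is excluded by the case hypothesis --- so the optimizer sits on $x^\top Bx=c_1\le c_2$ and is feasible for $\rm(IP)$. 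This buys a proof that needs no external lemmas and no discussion of the multiplier $\lambda$, at the (small) cost of an explicit existence argument that the paper's citations absorb; your closing remark correctly identifies the $\gamma(\lambda)$-monotonicity argument as the alternative the paper actually leans on. One cosmetic point: as written in the statement, $x_0=-A^{-1}a$ is not the stationary point of $x^\top Ax-2a^\top x$ (that is $A^{-1}a$); this is a sign slip in the paper's statement rather than in your argument, which correctly treats $x_0$ as the unconstrained minimizer throughout, as does the paper's own proof.
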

\begin{proof}
Item 2 is obvious. Item 1 and Item 3 are symmetric. So in the following, we only prove Item 1.

In our problem set, matrix $A$ is positive definite and $B$ is indefinite. Hence, in the definition $I_{PSD}=\{\lambda:Q_1+\lambda Q_2\succeq0\}$, we have $\lambda_1<0$, $\lambda_2>0$. Thus from Case 1 in Section 2.2.2 in \cite{pong2014generalized} we know, when $x_0^\top Bx_0<c_1$,  problem $\rm(IP)$ is equivalent to
$$ {(\rm EP_1)}~~~\min\{ x^\top Ax-2a^\top x:{\rm s.t.}~c_1=x^\top Bx\}.$$
Since  $x_0^\top Bx_0<c_1$, the optimal solution of $(\rm IP_1)$ must be at its boundary \cite{more1993generalizations}. This further yields  that problem $\rm(IP)$ is equivalent to $(\rm IP_1)$.
\end{proof}

Theorem 4.1 helps us  solve problem (IP) as an inequality constrained GTRS instead of solving two GTRS with equality constraints.
Before showing the numerical results, let us illustrate some functions used in our initialization. To obtain the CQR, the generalized eigenvalue problem is solved  by \verb"eigifp"  in Matlab, which was developed in \cite{golub2002inverse} for computing the maximum generalized eigenvalues for sparse definite matrix pencils.
In our numerical setting \verb"eigifp" is usually faster than the Matlab function \verb"eigs", though \verb"eigs" will outperform  \verb"eigifp" when the condition number is large or the density is low. We use the Matlab command \verb"sprandsym(n,density,cond,2)" and \verb"sprandsym(n,density)" to generate $Q_1$ and $Q_2$. We set the density of matrices  at 0.01 and use three levels of condition number for matrix $Q_1$, i.e., 10, 100 and 1000 and, in such settings,   \verb"eigifp" always dominates  \verb"eigs" (this may be because {\verb"eigs"} is developed for computing extreme generalized eigenvalues for arbitrary matrices and does not utilize the definiteness and symmetry properties of the matrix pencils in our problem setting).
In general, the main cost in estimating $L$ is to compute the maximum eigenvalues of matrices $A_1$ and $A_2$, which may be time consuming for large-scale matrices. To conquer this difficulty, we can  estimate a good upper bound with very cheap cost instead. Specially, we can run the function \verb"eigifp" with precision 0.1, which is much more efficient than  computing the true maximum eigenvalue with  \verb"eigifp", and, assuming $M$ is the output,  $M+0.1$ is then a good upper bound for $L$. In our numerical tests, we just use  \verb"eigifp" to estimate $L$ since our main goal is to illustrate the efficiency of Algorithm 2. In Algorithm \ref{alg1}, to avoid some numerical accuracy problem, we approximate  $h_1(x_k)=h_2(x_k)$ by $|h_1(x_k)-h_2(x_k)|/(|h_1(x_k)|+|h_2(x_k)|)\leq\epsilon_1$. Also we use $|h_1(x_k)-h_2(x_k)|/(|h_1(x_k)|+|h_2(x_k)|)\leq\epsilon_1$ instead of  $|h_1(x_k)- h_1(x_k)|\leq\rho$ in Algorithm 2 for stableness consideration. In our numerical tests for both Algorithms \ref{alg1} and \ref{alg2}, we use the following termination criteria  (if any one of the following three conditions is met, we terminate our algorithm), which are slightly different from the presented algorithms for robust consideration: \begin{enumerate}
\item $H(x_{k-1})-H(x_k)<\epsilon_2$,
\item $|h_1(x_k)-h_2(x_k)|/(|h_1(x_k)|+|h_2(x_k)|)\leq\epsilon_1$, $\norm{\alpha\nabla h_1(x_k)+(1-\alpha)\nabla h_2(x_k)}\leq \epsilon_3$,
\item $\norm{\nabla h_i(x_k)}\leq\epsilon_3$ and $|h_1(x_k)-h_2(x_k)|/(|h_1(x_k)|+|h_2(x_k)|)>\epsilon_1$, where $i\neq j$ and $i,j\in\{1,2\},ㄛ$
\end{enumerate}
where $\epsilon_1,~\epsilon_2$ and $\epsilon_3>0$ are some small positive numbers for termination of the algorithm.   Particularly, we set   $\epsilon_1=10^{-8},~\epsilon_2=10^{-11}$ and $\epsilon_3=10^{-8}$ in Algorithm 1, and $\epsilon_1=10^{-8},~\epsilon_2=10^{-11},~\epsilon_3=10^{-8}$,  $\sigma=10^{-4}$ and $\xi=1$ (for the modified Armijo rule) in Algorithm 2.

To improve the accuracy of the solution, we apply the Newton refinement process in Section 4.1.2 in \cite{adachi2016eigenvaluebased}. More specifically, assuming $x^*$ is the solution returned by our algorithm, we update $x^*$ by
$$\delta=\frac{(x^*)^\top Bx^*}{2\norm{Bx^*}^2}Bx^*, ~~x^*=x^*-\delta.$$In general, the ERW algorithm can achieve a higher precision than our method (after the Newton refinement process); the precision in their method is about $10^{-14}$, while ours is slightly less precise than theirs. Letting $v_1$ denote the optimal value of ERW algorithm and $v_2$ denote the optimal value of our algorithm, we  have at least ${|v_2-v_1|}/{|v_1|}\approx10^{-10}$ for most cases.
The iteration number reduces to 1/5 if we reduce the precision of from $\epsilon_1=10^{-8},~\epsilon_2=10^{-11},~\epsilon_3=10^{-8}$ to $\epsilon_1=10^{-5},~\epsilon_2=10^{-8},~\epsilon_3=10^{-5}$. This observation seems reasonable as our method is just a first order method.
 \begin{table}[h!]
\scriptsize
\tabcolsep=1pt
 \vspace*{0.0in} \centering
\caption{\label{tab2} Numerical results for positive definite $A$ and indefinite $B$}
\begin{tabular}{lc|cc|cc|c|cc|c||cc|cc|c|cc|c}
\hline
\multirow{3}*{cond}&\multirow{3}*{n} &\multicolumn{8}{c}{Easy Case}&\multicolumn{8}{c}{Hard Case 1}\\
\cline{3-17}
&&\multicolumn{2}{c|}{Alg1}&\multicolumn{2}{c|}{Alg2}&\multirow{2}*{$\rm time_{\text eig}$}&\multicolumn{3}{c||}{ERW}&\multicolumn{2}{|c|}{Alg1}&\multicolumn{2}{c|}{Alg2}&$\multirow{2}*{$\rm time_{\text eig}$}$&\multicolumn{3}{|c}{ERW}\\

&&iter&time&iter&time&&iter&time&fail&iter&time&iter&time&&iter&time&fail\\
\hline
 10& $10000$&90&\textbf{1.03} & 109.3& 1.24&1.45&5.9 &4.89&0&1490&16.7&609.6&\textbf{6.81}&1.19&6&11.1&1\\
 10& $20000$&52&\textbf{2.82} & 72.2&3.91&9.20&6.8&25.1&0& 530.3&27.9&313.9&\textbf{16.7}&7.56&6.5&53.9&0  \\
 10& 30000&60.9&\textbf{9.81}  &83.2&13.4&25.2&6.6 &75.0&0&1014.6&157&270.6&\textbf{41.0}&30.1&7.3&170&1 \\
 10& $40000$&58.3& \textbf{17.1}  &95.2&27.8&49.7&6.8&153&0& 1866.4&520&782.7&\textbf{219}&54.0&7.1&356&1\\
 \hline
 100& $10000$&417.7&\textbf{4.26}& 424.9& 4.34&3.99&5.9 &11.4&0&3328.2&33.9&1131.6&\textbf{13.6}&3.63&5.7&24.6&3\\
 100& $20000$&474.3&24.6 &342.4&\textbf{17.8}&18.4&6.1&69.4&0&6494.9&350&1410&\textbf{76.8}&42.2&6.4&123&5   \\
 100& $30000$&196.9&28.0  &162.1&\textbf{23.1}&51.8&6.2&147&0&2836.6&420&1197.9&\textbf{176}&44.2&5.2&388&0  \\
 100& $40000$&135.8&40.1    &114.7&\textbf{33.9}&153.6&6.3&309&0&906.7&257&506.1&\textbf{143}&173.5&6.5&639&0 \\
 \hline
 1000& $10000$&4245&44.7& 1706.7& \textbf{17.8}&14.2&5.3&56.7&0&25982.6&261&5090.7&\textbf{51.3}&24.0&5.75&81.1&6\\
 1000& $20000$&4177.3&216&1182.7&\textbf{61.2}&70.8&6.10&368&0&26214.8&1360&2726.8&\textbf{139}&98.1&5.8&346&5   \\
 1000& $30000$&2023.8&289  &813.7&\textbf{116}&189&5.9 &1220&0&15311.4&2190&2591.9&\textbf{385}&195&5.8&1530&3 \\
1000 &$40000$&2519.8&652  &1003&\textbf{301}&640.9&6.8 &2960&0&8735.8&3060&1343&\textbf{1020}&853&6.25&3280&2 \\
\hline
\end{tabular}
\end{table}

We report our numerical results in Table 1.  We use ``Alg1'' and ``Alg2'' to denote Algorithms \ref{alg1} and \ref{alg2}, respectively.   For each $n$ and each condition number, we generate 10 Easy Case and 10 Hard Case 1 examples.
Please refer to Table 1 in \cite{pong2014generalized} for the detailed definitions of Easy Case and Hard Cases 1 and 2. There is a little difference about the definitions of easy and hard cases between \cite{pong2014generalized} and  \cite{more1993generalizations}. Our analysis in the above sections uses the definitions in  \cite{more1993generalizations}. In fact, the Easy Case and Hard Case 1 are the easy case  and Hard Case 2 is the hard case mentioned in the above sections and \cite{more1993generalizations}. We use the notation ``time" to denote the average CPU time (in unit of second) and ``iter'' to denote the average iteration numbers for all the three algorithms. For ``Alg1'' and ``Alg2'', ``time" is  just the time for Algorithms \ref{alg1} and \ref{alg2}, respectively. The notation ``$\rm time_{eig}$" denotes the average CPU time for computing the generalized eigenvalue for our algorithm. So the total time for solving problem (P) should be the summation of the time of reformulate (P) into (M) and the time of Algorithm 1 or 2, whose main cost is just $\rm ``time"+\rm ``time_{eig}"$. And ``fail"  denotes the failure times in the 10 examples in each case for the ERW algorithm. One reason of the failures may be that the ERW algorithm terminates in 10 iterations even when it does not find a good approximated solution. We point out that for randomly generated test examples, our method always succeeds in finding an approximated solution to prescribed precision while  the ERW algorithm fails frequently in Hard Case 1.   Another  disadvantage of the ERW algorithm is the requirement of an efficient prior estimation of the initialization, which is unknown in general. In our numerical test, we assume that such an initialization is given as the same as \cite{pong2014generalized} does.

We also need to point out that in the Hard Case 2,  our algorithms do not outperform the ERW algorithm which uses the  shift and deflation technique.
 The main time cost of shift and deflate operation
is the computation of the extreme generalized eigenvalue of the matrix pencil $(A,B)$ and its corresponding generalized eigenvectors.
In the test instances, as the dimension of the eigenspace of the extreme generalized eigenvalue is  one, the   shift and deflation technique directly finds the optimal solution by calling  \verb"eigifp" once. Our algorithm reduces to an unconstrained quadratic minimization in Hard Case 2. However, the condition number of this unconstrained  quadratic minimization is so large that our algorithm performs badly as the classical gradient method. To remedy this disadvantage, we can add a step with almost free-time cost that claims that   either  we are in Hard Case 2 and output an optimal solution or we are in Easy Case or Hard Case 1. Recall that the hard case (or equivalently, Hard Case 2)  states that $b_1+\lambda^*b_2$ is orthogonal to the null space of $Q_1+\lambda^*Q_2$ which means that  $\lambda^*$  must be a boundary point of $I_{PSD}$. Suppose $\lambda_i=\lambda^*$. Then we must have  that  $x^{*}=\arg\min H(x)$ and $H(x^*)=h_i(x^{*})$ for some $i$=1 or 2.   In fact, if $\nabla h_i(x)=0$ and $h_i(x)\ge h_j(x),~j\in\{1,2\}/\{i\}$ for some $x$, then $x$ is optimal and we are in the hard case. So $\nabla h_i(x)=0$ and $h_i(x)\ge h_j(x)$ is sufficient and necessary for $x$ to be optimal to (M)  and be in the hard case. Hence we can construct an optimal solution for problem (M) as  $\bar x=(Q_1+\lambda_{i} Q_2)^\dagger(b_1+\lambda _{i}b_2)+\sum_i^k \alpha_j v_j$  (where $A^\dagger$ denotes the \textit{Moore--Penrose} pseudoinverse of $A$) if $v_{j},j=1,\ldots,k$  are the generalized eigenvectors of matrix pencil ($Q_1,Q_2$) with respect to the generalized eigenvalue $\lambda_i$ such that $h_i(\bar x)\ge h_j(\bar x)$ and $\alpha\ge0$. This equals to identifying if a  small dimensional convex quadratic programming problem (with variable $\alpha$) has an optimal value less than $h_i((Q_1+\lambda_{i} Q_2)^\dagger(b_1+\lambda _{i}b_2))$. And if such $\alpha$ does not exist, we are in the easy case (or equivalently, Easy Case or Hard Case 1). This technique is very similar to the shift and deflation technique in \cite{fortin2004trust,pong2014generalized}. Hence we can  solve Hard Case 2 within almost the same CPU time as the ERW algorithm. So we do not make further comparison for Hard Case 2.

Our numerical tests show that  both Algorithms 1 and 2 are much more efficient than the ERW algorithm in
Easy Case and for most cases in Hard Case 1.  The efficiency of our algorithms is mainly due to that  we only call the generalized eigenvalue solver once and every iteration only involves several matrix vector products (which are very cheap for sparse matrices).  We also note that, in Easy Case, Algorithm 1 is faster than Algorithm 2 when the condition number is small and slower than Algorithm 2 when the condition number is large. This may be because that  Algorithm 2  is equipped with the modified Armijo rule, which makes it more aggressive in choosing the step size and thus yields a fast convergence. In Hard Case 1,
Algorithm 2 is still much more efficient than the ERW algorithm while Algorithm 1 is slower than the ERW algorithm in about half the cases. This is because Algorithm 2 has a moderate iterate number due to the aggressiveness in choosing the step size and Algorithm 1 has a much large iterate number for these cases.  Moreover, our algorithms always succeed, while the ERW algorithm fails frequently in Hard Case 1.
A more detailed analysis with condition number for Algorithm 1 will be given in the following.

We note that several examples (of the 10 examples) in Easy Cases admit a much larger iteration number than average. This motivates us to analyze the main factor that affects the convergence rate (reflected by the iteration number) of Algorithm 1 (the analysis for Algorithm 2 seems hard  due to the non-smoothness of the problem). We then find that the main factor  is $\sqrt{\lambda_{\max \alpha}/2\lambda_{\min nnz\alpha}^2}$, as evidenced by the fact that examples in Easy Case and Hard Case 1 with more iterates all have a larger $\sqrt{\lambda_{\max \alpha}/2\lambda_{\min nnz\alpha}^2}$, where  $\lambda_{\max \alpha}$ denotes the maximum eigenvalue of matrix $\alpha A_1+(1-\alpha)A_2$ and $\lambda_{\min nnz\alpha}$ denotes the smallest nonzero eigenvalue of matrix $\alpha A_1+(1-\alpha)A_2$  with $\alpha$ being defined in Theorem \ref{descentthm} in the last iteration. In fact, when $x^k\rightarrow x^*\in\{x:\partial H(x)=0\}$ (in our examples, the optimal solution is unique), let the value of $\alpha$ at iterate $k$ be $\alpha^k$, then
$\alpha^k\rightarrow \alpha^*$, where $\alpha^*$  is the solution of $\alpha\nabla h_1(x^*)+(1-\alpha)\nabla h_2(x^*)=0$.
From the definition of KL exponent,  we have
$$C\times \min_\alpha\norm{\alpha\nabla h_1(x^k)+(1-\alpha)\nabla h_2(x^k)}\ge|H(x^k)-H(x^*)|^{1/2}.$$
Intuitively, the smallest value of  $C$ should be at least $$\frac{|H(x^k)-H(x^*)|^{\frac{1}{2}}}{ \min_\alpha\norm{\alpha\nabla h_1(x^k)+(1-\alpha)\nabla h_2(x^k)}}\rightarrow\frac{|\alpha (h_{1}(x^k)-h_2(x^*))+(1-\alpha) (h_{1}(x^k)-h_2(x^*))|^{\frac{1}{2}}}{ \min_\alpha\norm{\alpha\nabla h_1(x^k)+(1-\alpha)\nabla h_2(x^k)}}$$ which is upper bounded by $\sqrt{\lambda_{\max \alpha}/2\lambda_{\min nnz\alpha}^2}$. Thus, the asymptotic value of $C$ can be  roughly seen as $\sqrt{\lambda_{\max \alpha}/2\lambda_{\min nnz\alpha}^2}$. Hence both Easy Case and Hard Case 1 admit local linear convergence and the convergence rate is
$$\left(\sqrt{1-\frac{1}{2C^2L}}\right)^k= \left(\sqrt{1-\frac{\lambda_{\min nnz\alpha}^2}{L\lambda_{\max\alpha}}}\right)^k$$
from Theorem \ref{lincon}.
We also observe  from our numerical tests that in most cases the values of   $\lambda_{\max\alpha}$ are similar and that $\lambda_{\min nnz\alpha}$ in Easy Case is much larger than $\lambda_{\min nnz\alpha}$ in Hard Case 1 and $\lambda_{\max\alpha}$ in Easy Case is very close to $\lambda_{\max\alpha}$  in Hard Case 1. Hence,   $\sqrt{1-\lambda_{\min nnz\alpha}^2/(L\lambda_{\max\alpha})}$ in Easy Case  is usually much smaller than that in Hard Case 1.   (As $Q_2$ is random in our setting, the larger the condition number of $Q_1$ is, the larger expectation of  $\sqrt{1-\lambda_{\min nnz\alpha}^2/(L\lambda_{\max\alpha})}$ is.) This  explains why the condition number of matrix $Q_1$ measures, to a large degree, the hardness of our algorithms in solving problem (M).
  Since Easy Case has a smaller $\sqrt{1-(\lambda_{\min nnz\alpha}^2/L\lambda_{\max\alpha})}$ than Hard Case 1 for the same condition number and problem dimension, Easy Case can be solved faster than Hard Case 1.
   This coincides
with our numerical results, i.e., Easy Case admits a smaller iterate number than  Hard Cases 1.

We also tried to apply MOSEK \cite{mosek2017mosek} to solve the CQR. But our numerical results showed that MOSEK is much slower than both our algorithms and the ERW algorithm, which took about 833 seconds for Easy Case and 960 second for Hard Case 1 with   $ n=10000$ and $\rm cond=10$. So we do not run further numerical experiments with MOSEK. We also tested the SOCP reformulation \cite{ben2014hidden} under the simultaneous digonalization condition of the quadratic forms of the GTRS and the DB algorithm in \cite{salahi2016efficient} based on the simultaneous digonalization condition of the quadratic forms. The simultaneous digonalization condition naturally holds for problem (IP) when $A$ is positive definite. Our preliminary result shows that our method is much more efficient than the two methods based on  simultaneous digonalization when $n\geq10000$ and density$=0.01$ and thus we also do not report the numerical comparison in this paper. We believe this is mainly because the  simultaneously digonalization procedure of the matrices involves  matrix inverse, matrix matrix product, a full Cholesky decomposition and a spectral decomposition (of a dense matrix),  which is more time consuming than the operations of  matrix vector products in our algorithm.  Hence we do not report the numerical results based on the simultaneous digonalization technique.

\section{Concluding remarks}
In this paper, we have derived a simple convex quadratic reformulation for the GTRS, which only involves a linear objective function and two convex quadratic constraints under mild assumption.
 We further reformulate the   CQR to an unconstrained minimax problem under Condition \ref{con1}, which is the case of interest. The minimax reformulation is a well structured convex, albeit non-smooth, problem.  By investigating its inherent structure, we have proposed two efficient matrix-free algorithms to solve this minimax reformulation. Moreover, we have offered a theoretical guarantee of global sublinear convergence rate for both algorithms and  demonstrate a local linear convergence rate for Algorithm 1 by proving  the KL property for the minimax problem with an exponent of $1/2$ under some mild conditions.
Our numerical results have demonstrated clearly out-performance of our algorithms over the state-of-the-art algorithm for the GTRS.

As for our future research, we would  like to show whether the CQR and the minimax reformulation and the algorithms for the minimax problem can be extended to solve GTRS  with additional linear constraints.
As  the analysis in numerical section indicates that our algorithms have similar performance with unconstrained quadratic minimization, i.e., both algorithms admit a locally linear convergence rate with the steepest descent method, we would like to generalize existing algorithms that are efficient in solving unconstrained quadratic minimization to solve our minimax reformulation, e.g., the conjugate gradient method or Nesterov's accelerated gradient descent algorithm.
Another line of future research is to investigate whether our algorithm can be extended to general minimax problems with more (finite number of) functions. It is also interesting to verify whether the KL property still holds and whether the KL exponent is still $1/2$ when more functions are involved.

\section*{Acknowledgements}
This research was partially supported by Hong Kong Research Grants Council under
Grants 14213716 and 14202017. The second author is also grateful to the support from Patrick Huen
Wing Ming Chair Professorship of Systems Engineering and Engineering Management. The authors would also like to thank Zirui Zhou and Huikang Liu  for their insightful discussions.

\bibliographystyle{abbrv}
\bibliography{reference}
\end{document}